\newtheorem{lemma}{Lemma}[section]
\newtheorem{theorem}{Theorem}[section]
\newtheorem{definition}{Definition}[section]
\newtheorem{proposition}{Proposition}[section]
\newtheorem{corollary}{Corollary}
\newtheorem*{Claim}{Claim}
\newtheorem*{Lemma}{Lemma}
\newtheorem{remark}{Remark}
\newtheorem{theoremalph}{Theorem}
\begin{document}
\title{SRB measures for diffeomorphisms with continuous invariant splittings}
\author{Zeya Mi, \ Yongluo Cao and  Dawei Yang\footnote{D. Yang is the corresponding author. Y. Cao and D. Yang would like to thank the support of NSFC 11125103, NSFC 11271152 and A Project Funded by the Priority Academic Program Development of Jiangsu Higher Education Institutions(PAPD).}}
\maketitle
\begin{center}
\begin{minipage}{120mm}
{\small  {\bf Abstract}. We study the existence of SRB measures of $C^2$ diffeomorphisms for attractors whose bundles admit H\"older continuous
invariant (non-dominated) splittings.
We prove the existence when one sub-bundle has the \emph{non-uniform expanding} (the term was introduced in \cite{ABV00}) property
on a set with positive Lebesgue measure and
the other sub-bundle admits non-positive Lyapunov exponents on a total probability set.}
\end{minipage}
\end{center}

%

\section{Introduction}
Consider a smooth dynamical system $(M,f)$, where $M$ is a compact smooth manifold and $f$ is a $C^2$ diffeomorphism over $M$. Among all $f$-invariant Borel probabilities, we are interesting in finding measures that reflect the chaotic properties of $f$ from the viewpoints of entropies and Lyapunov exponents. In 1970s, Sinai, Ruelle and Bowen \cite{Bow75, BoR75, Rue76, Sin72} managed to get this kind of measures for hyperbolic systems.

Generally, for an invariant measure $\mu$ of $f$, if $(f,\mu)$ has a positive Lyapunov exponent and the conditional measures of $\mu$ along (Pesin) unstable manifolds of $\mu$ are absolutely continuous with respect to Lebesgue measures on these manifolds, then one says that $\mu$ is an \emph{SRB measure} (see for instance in \cite{y02} for this definition). Ledrappier-Young in \cite{ly} proved that this is equivalent to say that $h_\mu(f)$ is equal to the integral of the sum of its positive Lyapunov exponents, i.e., $(f,\mu)$ satisfies the Pesin Entropy formula.

One can ask the following question (by the philosophy of Palis \cite{pa}): what is the abundance of SRB measures for diffeomorphisms? In this paper, we will show the existence of SRB measures for systems with H\"older continuous invariant splittings and some weak hyperbolic properties.

Let $K\subset M$ be an attractor, i.e., $K$ is a compact invariant set and $K=\bigcap_{n\ge 0} f^n(U)$ for some open neighborhood $U$ of $K$ such that $\overline{f(U)}\subset U$. Assume that $T_{\overline{U}}M=E\oplus F$ is a H\"{o}lder continuous $Df$-invariant splitting. The simplest case is when
$U = M$. We say a Borel set has \emph{total probability} if it has measure one for every $f$-invariant probability measure.

\begin{theoremalph}\label{Theo-attractor}
Under the above setting, if we have
$$
{\rm Leb}\left(\left\{x\in U: \limsup_{n\rightarrow \infty}\frac{1}{n}\sum_{i=1}^{n}\log\|Df^{-1}/F(f^{i}(x))\|<0\right\}\right)>0,
$$
and there exists a subset $\Gamma\subset U$ with total probability such that for every point $x\in \Gamma$, it has
$$
\liminf_{n\rightarrow \infty}\frac{1}{n}\log\|Df^n/E(x)\| \le 0,
$$
then there is an SRB measure supported on $K$.

\end{theoremalph}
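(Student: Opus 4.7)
The plan is to follow the Alves--Bonatti--Viana construction of SRB measures: extract a candidate measure as a weak-$*$ accumulation point of Ces\`aro averages of Lebesgue restricted to the non-uniform expansion set, and then use the second hypothesis to identify its unstable direction with $F$. Write
\[
H=\left\{x\in U:\limsup_{n\to\infty}\frac{1}{n}\sum_{i=1}^{n}\log\|Df^{-1}/F(f^{i}(x))\|<0\right\}.
\]
First I would apply a Pliss-type lemma to the hypothesis $\mathrm{Leb}(H)>0$ in order to produce, for each $x\in H$, an infinite sequence of hyperbolic times $n_{1}(x)<n_{2}(x)<\cdots$ of positive upper density, at which every suffix of the orbit enjoys a uniform exponential backward contraction along $F$.

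At each hyperbolic time $n=n_{j}(x)$, using the H\"older continuity of the splitting and the exponential backward contraction along $F$, I would construct through $f^{n}(x)$ a $C^{1}$ embedded disk $W_{n}(x)$ of uniform radius, tangent to a narrow cone around $F$, on which $f^{-n}$ is an exponential contraction with uniformly bounded Jacobian distortion. This is the non-dominated analogue of the local unstable disks of \cite{ABV00}. Once this bounded distortion is in hand, the Ces\`aro averages $\mu_{n}=\frac{1}{n}\sum_{k=0}^{n-1}f^{k}_{*}(\mathrm{Leb}|_{H})$ admit a weak-$*$ accumulation point $\mu$, which is $f$-invariant, supported on $K$ (since $\overline{f(U)}\subset U$), and carries a measurable partition into local disks tangent to $F$ whose conditional measures are absolutely continuous with respect to induced Lebesgue on the leaves.

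To upgrade $\mu$ to an SRB measure, the second hypothesis enters. Total probability of $\Gamma$ gives $\mu(\Gamma)=1$, hence $\liminf\frac{1}{n}\log\|Df^{n}/E(x)\|\le 0$ for $\mu$-a.e. $x$, so every Lyapunov exponent of $\mu$ along $E$ is non-positive. Combined with the strictly positive exponents along $F$ supplied by the hyperbolic time estimate and Birkhoff's theorem, this shows that the Oseledets unstable subspace of $\mu$ coincides with $F$ on a set of full $\mu$-measure. The disks in the partition above therefore sit inside Pesin unstable manifolds of $\mu$, and the absolute continuity of the conditional measures is exactly the defining property of an SRB measure.

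The main obstacle is the construction of $W_{n}(x)$ and the associated bounded-distortion estimate in the \emph{absence of a dominated splitting}: a narrow cone around $F$ is not automatically $Df$-invariant, so the standard forward cone-field argument of \cite{ABV00} does not apply directly. I would instead use the H\"older continuity of the subbundles to control the angle between $Df^{-k}(T_{y}W_{n}(x))$ and $F(f^{n-k}(x))$ in terms of the orbit displacement, and then exploit the very strong backward contraction available at hyperbolic times to absorb this H\"older error so that backward iterates of $W_{n}(x)$ remain graphs over $F$ of controlled slope. Once this disk construction and its distortion bound are established, the remainder of the proof is a reasonably standard application of the ABV--Pesin scheme.
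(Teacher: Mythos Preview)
Your overall strategy is the same as the paper's (Pliss $\Rightarrow$ hyperbolic times $\Rightarrow$ uniform-size disks with bounded distortion $\Rightarrow$ Ces\`aro limit with absolutely continuous conditionals $\Rightarrow$ identify $F$ with the Oseledets unstable bundle), but your treatment of the ``main obstacle'' has a real gap, and it is precisely the point that distinguishes this paper from \cite{ABV00}.

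You propose to build the disk $W_n(x)$ and control the angles $\angle\bigl(Df^{-k}(T_yW_n(x)),F(f^{n-k}(x))\bigr)$ using only the H\"older continuity of the splitting and the backward contraction along $F$ available at a hyperbolic time. This does not work. If $v=v_E+v_F\in T_yW_n(x)$ has a small $E$-component, then after applying $Df^{-k}$ the ratio $\|Df^{-k}v_E\|/\|Df^{-k}v_F\|$ is governed by $\prod \|Df^{-1}/E\|\big/\prod m(Df^{-1}/F)$; the hyperbolic-time estimate bounds the denominator from below, but gives you no control whatsoever on the numerator. Without some \emph{a priori} bound on the growth of $E$, the backward iterates of your disk can rotate out of any fixed cone around $F$, and then both the size and the distortion estimates collapse. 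H\"older continuity of the bundles only tells you that $F(y)$ is close to $F(x)$ when $y$ is close to $x$; it says nothing about how $Df$ moves subspaces that are merely close to $F$.

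The paper resolves this by using the second hypothesis at the \emph{beginning} of the argument, not only at the end. From $\liminf\frac{1}{n}\log\|Df^n/E(x)\|\le 0$ on a set of total probability one deduces (via a Cao-type argument and passing to an iterate $f^N$) that $\|Df/E(x)\|<e^{\varepsilon_0}$ uniformly on $\overline U$, with $\varepsilon_0$ chosen so small that $\lambda_1 e^{\varepsilon_0}<1$. Combined with the definition of $\Lambda_{\lambda_1,1}$ this gives an \emph{average domination} along each orbit segment $(x,f^n(x))$ with $x\in\Lambda_{\lambda_1,1}$:
\[
\prod_{j=0}^{i-1}\frac{\|Df/E(f^j(x))\|}{m(Df/F(f^j(x)))}\le(\lambda_1 e^{\varepsilon_0})^i<1,
\]
and it is this inequality that makes the $F$-cone forward invariant along those particular orbits, yields the uniform disks at hyperbolic times, and (together with a further choice of $\xi$ and $\lambda_3$) controls their H\"older curvature. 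The H\"older continuity of $F$ is used in the paper \emph{only} for the Jacobian distortion bound, not for the cone argument.

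Two smaller points. First, the paper does not push forward $\mathrm{Leb}|_H$; it first foliates a neighborhood of a density point of $\Lambda_{\lambda_1,1}$ by $C^2$ disks tangent to ${\cal C}^F_a$, picks one leaf $D$ with $\mathrm{Leb}_D(\Lambda_{\lambda_1,1})>0$, and averages $f^i_*\mathrm{Leb}_D$. Second, the limit measure $\mu$ is not itself asserted to be SRB; an additional ergodic-decomposition argument is needed to extract an ergodic component $\mu_z$ with $\mu_z(L_\infty)>0$ and absolutely continuous conditionals, and it is $\mu_z$ that is the SRB measure.
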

\begin{remark}
We only need the bundle $F$ to be \emph{H\"{o}lder} continuous in the proof.
\end{remark}
By adjusting the condition of $E$-direction, we have the next Corollary:

\begin{corollary}\label{corollary-attractor}

Under the assumption of theorem~\ref{Theo-attractor}, if we have
$$
\liminf_{n\rightarrow \infty}\frac{1}{n}\log\|Df^n/E(x)\|< 0
$$
on a set of total probability, then the SRB measure $\mu$ we get is \emph{physical}, in the following sense:

$${\rm Leb}\left(\left\{x:\frac{1}{n}\sum_{i=0}^{n-1}\delta_{f^i(x)}\xrightarrow{weak \ast}\mu\right\}\right)>0.$$

\end{corollary}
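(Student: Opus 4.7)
The plan is to upgrade the SRB measure $\mu$ provided by Theorem~\ref{Theo-attractor} to a physical measure by observing that the strengthened hypothesis forces $\mu$ to be \emph{hyperbolic} in Pesin's sense. Under the corollary's assumption, the total-probability condition $\liminf_{n} \frac{1}{n}\log\|Df^n/E(x)\|<0$ holds $\mu$-almost everywhere, and on Oseledec-regular points the $\liminf$ coincides with the top Lyapunov exponent along $E$; hence every Lyapunov exponent along $E$ is strictly negative at $\mu$-a.e.\ point. Together with the strict positivity of the $F$-exponents inherent to the SRB property, this makes $\mu$ a hyperbolic SRB measure. After replacing $\mu$ by an ergodic SRB component—which still satisfies the same hyperbolicity estimates and remains SRB by the standard ergodic decomposition of SRB measures—I may assume $\mu$ is ergodic.

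Given hyperbolicity, the remaining argument is classical Pesin theory. Pesin's stable manifold theorem produces $C^1$ local stable and unstable manifolds $W^s_{\rm loc}(x)$, $W^u_{\rm loc}(x)$ tangent respectively to $E(x)$ and $F(x)$ for $\mu$-a.e.\ $x$, of uniformly positive size on each Pesin block $\Lambda_\ell$ of positive $\mu$-measure. Fix $\ell$ and a $\mu$-typical $x_0\in \Lambda_\ell$. The SRB property says that the conditional measure of $\mu$ on $W^u_{\rm loc}(x_0)$ is equivalent to the induced Lebesgue measure; combined with Birkhoff's theorem applied to a countable dense family of continuous observables, this shows that Lebesgue-almost every $y\in W^u_{\rm loc}(x_0)$ is a $\mu$-generic point, meaning that the empirical averages $\frac{1}{n}\sum_{i=0}^{n-1}\delta_{f^i(y)}$ converge weak-$*$ to $\mu$.

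Next I propagate genericity through stable manifolds. Any two points on the same local stable manifold have the same forward time averages for continuous functions, so every point of $W^s_{\rm loc}(y)$ is $\mu$-generic whenever $y$ is. The absolute continuity of the stable holonomy on Pesin blocks—Pesin's classical theorem—then implies that the saturation $\bigcup_{y} W^s_{\rm loc}(y)$, with $y$ ranging over a positive Lebesgue subset of generic points of $W^u_{\rm loc}(x_0)$, has positive Lebesgue measure in $M$. Since this saturation lies in the basin of $\mu$, that basin has positive Lebesgue measure, which is exactly the physical property requested.

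The main obstacle is the absolute continuity of the stable foliation in the present non-uniformly hyperbolic, merely H\"older, \emph{non-dominated} setting: one cannot cite partially hyperbolic results and must instead rely on Pesin's absolute continuity theorem on the Pesin blocks afforded by the hyperbolicity of $\mu$. A secondary subtlety is that the proof of Theorem~\ref{Theo-attractor} must be compatible with the ergodic-component selection above, so that the SRB measure picked is physical for the dynamics rather than merely supported on a hyperbolic ergodic piece; this should be immediate from how $\mu$ is typically constructed as a weak-$*$ limit of push-forwards of Lebesgue on $U$.
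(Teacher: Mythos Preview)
Your argument is essentially correct: once $\mu$ is ergodic hyperbolic SRB, the standard Pesin package (stable manifolds on Pesin blocks, absolute continuity of the stable holonomy for $C^2$ diffeomorphisms, and the SRB disintegration on unstable leaves) yields a basin of positive Lebesgue measure. This is the classical ``hyperbolic SRB $\Rightarrow$ physical'' argument, and it goes through here since $f$ is $C^2$; the H\"older-only, non-dominated nature of the splitting $E\oplus F$ is irrelevant once you are working with the Pesin stable lamination of the hyperbolic measure.

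The paper, however, takes a shorter and sharper route. It observes that the strengthened hypothesis $\liminf_n \tfrac{1}{n}\log\|Df^n/E(x)\|<0$ on a set of \emph{total probability} forces, by Cao's semi-uniform result \cite{cao03}, that $E$ is \emph{uniformly} contracted on $\overline{U}$. Thus one is in the genuinely partially hyperbolic situation where stable manifolds tangent to $E$ exist at every point with uniform size, and the absolute continuity of the stable foliation is the classical uniform theorem rather than Pesin's non-uniform version. What the paper's approach buys is that it avoids Pesin blocks and the measurable-size issues of non-uniform stable manifolds entirely; what your approach buys is that it would still work under a weaker hypothesis (negative $E$-exponents only $\mu$-a.e.\ rather than on a total-probability set), at the cost of invoking heavier machinery. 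Note also that the ergodic-component step in your write-up is unnecessary here: the measure produced in the proof of Theorem~\ref{Theo-attractor} is already the ergodic $\mu_z$.
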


There are several previous related results. By the limit of our knowledge, we give a partial list below.

\begin{itemize}

\item Alves, Bonatti and Viana in \cite{ABV00} proved the existence of SRB (\emph{physical}) measures in ``mostly expanding" systems. Notice that all the splittings in \cite{ABV00} are \emph{dominated}. In contrast to \cite{ABV00}, the splitting in Theorem~\ref{Theo-attractor} is only H\"{o}lder continuous, which can be deduced by domination (one can see a proof in \cite[Theorem 3.7]{AP10}).

\item In  \cite{lep04} together with \cite{AL13}, the authors considered a system where the uniform hyperbolicity decreasing to vanish
when approaching to some invariant critical set. They assume there exists a subset $\Lambda$ of points that exhibit non
zero Lyapunov exponents (which ensures the orbits do not stay too long time in any fixed neighborhood of the critical set),
and have local stable/unstable manifolds with uniform size. These facts imply the existence of (countable) \emph{Markov Partition} over $\Lambda$.
By using the Markov Partition they proved that if there is an unstable manifold of some point in $\Lambda$ that intersects $\Lambda$ with positive Lebesgue measure,
then there exists some SRB measure.

\item Climenhaga, Dolgopyat and Pesin in~\cite{cdp13} considered a system with a measurable splitting and measurable invariant cone fields. They proved the system has some SRB measures if the system has some property called \emph{effective hyperbolicity} for measurable invariant cone fields. Unlike~\cite{cdp13}, systems in this paper do not have invariant cone fields.

\item One part of Theorem 1.2 of Liu and Lu in \cite{LiL15}  proved the existence of SRB measures for attractors with a continuous invariant splitting
with two bundles, one bundle is uniformly expanding and the other one has no positive Lyapunov exponents everywhere.
\end{itemize}

%
%
%

In this work, we need to deal with splittings which are not dominated. \cite{ABV00} studied the case of dominated splittings deeply.
 When we don't have the dominated property, we don't have the invariant cones generally, and we lose the estimations on the H\"older curvature
of sub-manifolds and the distortion bounds in contrast to \cite{ABV00}. However, the non-uniform expansion along $F$ and the non-expansion
along $E$ allow us to have some non-uniform domination on a set with positive Lebesgue measure. Then by focusing on
some special sets, we can recover the invariance of cones and the distortion bounds. By more accurate calculation,
we can estimate the H\"older curvature only for hyperbolic times on the special sets.

We also have a version for sub-manifold tangent to the $F$-bundle. Given sub-manifold $D$, denote by ${\rm Leb}_D$ the induced normalized Lebesgue measure restricted to $D$.
\begin{theoremalph}\label{Thm;submanifold}

Let $T_{\overline{U}}M=E\oplus F$ be a H\"{o}lder continuous $Df$-invariant splitting. If there is a $C^2$ local sub-manifold $D\subset U$, whose dimension is $\dim F$,
 such that
$$
{\rm Leb}_D\left(\left\{x\in D: ~T_x D=F(x), ~\limsup_{n\rightarrow \infty}\frac{1}{n}\sum_{i=1}^{n}\log\|Df^{-1}/F(f^{i}(x))\|<0\right\}\right)>0,
$$
and there exists a subset $\Gamma\subset U$ with total probability such that for every point $x\in \Gamma$, it has
$$
\liminf_{n\rightarrow \infty}\frac{1}{n}\log\|Df^n/E(x)\| \le 0,
$$
then there is an SRB measure supported on $K$.

\end{theoremalph}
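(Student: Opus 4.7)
The plan is to run the Alves--Bonatti--Viana non-uniform hyperbolic construction directly on the disk $D$ provided by the hypothesis, effectively bypassing the Fubini step in the proof of Theorem~\ref{Theo-attractor} that would have produced such a disk from a positive Lebesgue set in $U$. In this sense Theorem~\ref{Thm;submanifold} carries the combinatorial and geometric core common to both results. Concretely, let $D_0 \subset D$ be the positive ${\rm Leb}_D$-measure set on which $T_x D = F(x)$ and $\limsup \frac{1}{n}\sum_{i=1}^n \log\|Df^{-1}/F(f^i(x))\| < 0$. Applying the Pliss lemma to this sequence, I would extract, for ${\rm Leb}_D$-a.e.\ $x\in D_0$, an infinite increasing sequence of $(c_1,c_2)$-hyperbolic times $\{n_k(x)\}$ at which the backward contraction along $F$ is uniformly exponential; setting $H_n=\{x\in D_0 : n \text{ is a hyperbolic time for } x\}$, a standard counting argument yields ${\rm Leb}_D(H_n)\ge \sigma$ for a uniform $\sigma>0$ and infinitely many $n$.

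Because the splitting is only H\"older continuous and not dominated, one cannot invoke the invariant cone fields, curvature bounds, or distortion estimates of \cite{ABV00} globally. The remedy I would implement is the one foreshadowed in the paragraph preceding the theorem: on orbit segments landing at a hyperbolic time through $H_n$, the non-uniform $F$-expansion, combined with the hypothesis $\liminf \frac{1}{n}\log\|Df^n/E(x)\|\le 0$ on the total probability set $\Gamma$, yields a non-uniform domination $\|Df^{-j}/F(f^n(x))\|\cdot \|Df^j/E(f^{n-j}(x))\| < e^{-cj}$ along the relevant finite pieces. This restricted domination is enough to build a $Df^{-1}$-invariant cone field along the backward orbit of $f^n(H_n)$; pulling back a ball of small radius $\delta_0$ around $f^n(x)$ then produces a disk $\Delta_n(x)\subset f^n(D)$ of uniformly large inner radius, uniformly H\"older tangent to $F$, on which $f^{-n}$ has bounded distortion for its Jacobian along $F$. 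The delicate point is that the H\"older curvature estimate, which in the dominated setting would follow from a routine graph-transform fixed-point argument, now has to be carried out only along the hyperbolic-time subsequence using the quantitative domination gained on $H_n$.

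With these geometric ingredients in hand, I would form the Ces\`aro averages $\mu_n=\frac{1}{n}\sum_{i=0}^{n-1} f^i_*({\rm Leb}_D|_{D_0})$ and take a weak-$\ast$ accumulation point $\mu$, which is automatically $f$-invariant and supported on $K$. The hyperbolic-time disks $\Delta_n(x)$ contribute a uniformly positive-mass component of $\mu_n$ that, by a disintegration argument modeled on \cite{ABV00}, passes to the limit and endows $\mu$ with a positive-mass component admitting absolutely continuous conditional measures along a measurable lamination tangent to $F$. The main obstacle, and the concluding step, is to promote this to the SRB property: I would invoke the total probability hypothesis on $E$ to argue that every ergodic component of $\mu$ has only nonpositive exponents along $E$, so that all positive exponents lie along $F$ and the disks $\Delta_n(x)$ are bona fide Pesin unstable manifolds; the absolute continuity of the conditionals along them is then precisely the defining property of an SRB measure recalled in the introduction.
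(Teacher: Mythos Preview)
Your overall strategy matches the paper's: bypass the Fubini step of Theorem~\ref{Theo-attractor} and run the Section~2--3 machinery directly on the given disk. The paper's own proof of Theorem~\ref{Thm;submanifold} is a two-line sketch that does exactly this. There is, however, one genuine technical omission in your outline that the paper handles explicitly and that cannot be skipped.

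You extract hyperbolic times on $D_0$ with the standard Pliss lemma and then invoke the \emph{backward} domination $\|Df^{-j}/F(f^n(x))\|\cdot\|Df^j/E(f^{n-j}(x))\|<e^{-cj}$ available at a hyperbolic time $n$. But the geometric package (Lemma~\ref{Lem:submanifold-cone}, Lemma~\ref{Lemma:hyperbolictangentcone}, Propositions~\ref{diskproperty}--\ref{Holder curvature}) requires the \emph{forward} average-domination condition $\prod_{j=0}^{i-1}\|Df/E(f^j(x))\|/m(Df/F(f^j(x)))\le\gamma^i$ for every $1\le i\le n$, so that all intermediate images $f^i(D)$ stay in the $F$-cone; otherwise the comparison $m(Df|_{T_zf^i(D)})\approx m(Df|_F)$ fails at intermediate steps and neither the backward contraction nor the H\"older-curvature bound goes through. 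The hyperbolic-time condition controls products $\prod_{j=n-k}^{n-1}$, not $\prod_{j=0}^{i-1}$, and backward domination widens rather than narrows the $F$-cone under $Df^{-1}$. The paper closes this gap via Lemma~\ref{Lem:infinitePliss}: from the $\limsup<0$ hypothesis it produces $\lambda_1\in(0,1)$ and a finite iterate $j$ with ${\rm Leb}_{f^j(D)}\big(f^j(D)\cap\Lambda_{\lambda_1,1}\big)>0$; membership in $\Lambda_{\lambda_1,1}$ is precisely what forces the forward average domination (Lemma~\ref{fromweakcontract}), once the total-probability hypothesis on $E$ has been converted, via Proposition~\ref{weak contracting} and the passage to an iterate in assumption~(H), into the uniform bound $\|Df/E\|<e^{\varepsilon_0}$. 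One then takes a Lebesgue density point of this intersection and a small sub-disk of $f^j(D)$ around it, which also resolves a second point you leave implicit: the initial disk must be tangent to $\mathcal{C}_a^F$ everywhere, whereas the hypothesis only gives $T_xD=F(x)$ on the measurable set $D_0$.
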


Recall that R. Leplaideur \cite{lep04} considered some topologically hyperbolic diffeomorphisms.
 More precisely, \cite{lep04} discussed some open set $U$ containing a compact invariant set $\Omega$ with a H\"older
 continuous invariant splitting $E^{cs}\oplus E^{cu}$, together with continuous non-negative functions $k^s$ and $k^u$, such that
\begin{itemize}
\item $\|Df/E^{cs}(x)v\|\le {\rm e}^{-k^s(x)}\|v\|$, $\|Df/E^{cu}(x)v\|\ge {\rm e}^{k^u(x)}\|v\|$ for any $x\in U$ and
 any non-zero vector $v\in T_x M$ in the subspace, respectively;

\item $k^s(x)=0$ if and only if $k^u(x)=0$. Moreover, the set of all points with the above property is invariant.

\end{itemize}

\cite{lep04} proved that for some point with large unstable manifold and has good estimations, then $f$ admits a finite or $\sigma$-finite SRB measure. Especially, \cite{lep04} reduced the initial problem to \cite[Lemma 3.8]{lep04} which asserts that there is a unstable manifold $D$ such that
$$
{\rm Leb}_D\left(\left\{x\in D: ~\limsup_{n\rightarrow \infty}\frac{1}{n}\sum_{i=1}^{n}\log\|Df^{-1}/F(f^{i}(x))\|<0\right\}\right)>0.
$$

Our Theorem~\ref{Thm;submanifold} can apply to the main theorem of \cite{lep04} to show there really exists a finite SRB measure. Notice that this has already been obtained earlier by a recent paper of Alves-Leplaideur \cite{AL13}. However, the method here is different from \cite{AL13}: we don't need to estimate the unstable manifold in advance and we don't construct Markov partitions.

This paper is organized as follows. In Section $2$, we study the dynamics from continuous invariant splittings, mainly about some geometry properties for the iterated disks around some special points which will be denoted by $\Lambda_{\lambda_1,1}$, including the angles between these disks and $F$-bundle, the backward contracting property in hyperbolic times and also the bounded distortion property, and one should notice that this is the unique place using the \emph{H\"{o}lder} assumption of the $F$-bundle. Section $3$ is dedicated to prove Theorem \ref{Theo-attractor}, during which we will select some disk tangent to the $F$-direction cone field such that one can apply the properties obtained in Section $2$. Then we consider the Lebesgue measures of the disk under dynamics $f$ and we will find some ergodic measure of the accumulation of these measures satisfying Theorem \ref{Theo-attractor}. After that we will give the proof of Corollary \ref{corollary-attractor} as a simple application. Finally, a short proof of Theorem \ref{Thm;submanifold} will be presented by using the main approach we built in previous sections but with some modification.

\textbf{Acknowledgement} We would like to thank Prof. J. Xia for useful discussions and suggestions. Z. Mi would like to thank Northwestern University for their hospitality and the excellent research atmosphere, where this work was partially done. Z. Mi also would like to thank  China Scholarship Council (CSC) for their financial support.

\section{The dynamics from continuous invariant splittings}
Assume that $f$ is a $C^2$ diffeomorphism on a compact Riemmannian manifold  $M$, and $K$ is an attractor introduced in Section $1$. Let $T_{\overline{U}}M=E\oplus F$ be a $Df$-invariant continuous splitting throughout this section unless otherwise noted.

\subsection{Pliss Lemma and its applications}

The next classic Pliss lemma is very useful in getting hyperbolic times. The proof can be found for instance in \cite[Lemma 3.1]{ABV00}.

\begin{lemma}\label{Lem:numberPliss}(\textit{Pliss lemma})
For numbers $C_0\geq C_1>C_2\geq0,$ there is $\theta=\theta(C_0,C_1,C_2)>0$ such that for any integer $N$, and  numbers $b_1, b_2, \cdots ,b_N\in{\mathbb R}$, if
$$
\sum_{j=1}^N b_j\geq C_1N, ~~~b_j\leq C_0,~~~\forall~ 1\le j\le N.
$$
Then there is an integer $\ell>\theta N$ and a subsequence $1<n_1<\cdots <n_{\ell} \leq N$ such that
$$
\sum_{j=n+1}^{n_i} b_j\geq C_2(n_i-n) ~~~\text{for every} ~~0 \leq n<n_i ~~\text{and} ~~i=1,
\cdots ,\ell.
$$
\end{lemma}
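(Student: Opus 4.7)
The plan is to reduce this to counting record times of a rescaled partial-sum sequence. First I would set $a_j := b_j - C_2$ and $S_n := a_1 + \cdots + a_n$, with $S_0 := 0$, so that the hypothesis becomes $S_N \geq (C_1 - C_2)N$ with each $a_j \leq C_0 - C_2$, and the conclusion requested at an index $n_i$ becomes simply $S_{n_i} \geq S_n$ for every $0 \leq n < n_i$. I would then let $n_1 < n_2 < \cdots < n_\ell$ enumerate all such record indices in $\{1, \dots, N\}$; since the global maximum of $S_0, \dots, S_N$ is attained at a record, one automatically has $S_N \leq S_{n_\ell}$.

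The heart of the argument will be a uniform bound on each jump between consecutive records: I claim $S_{n_i} - S_{n_{i-1}} \leq C_0 - C_2$, with the convention $n_0 := 0$. If $n_i = n_{i-1} + 1$ this is immediate from $a_{n_i} \leq C_0 - C_2$. Otherwise, every index $k$ strictly between $n_{i-1}$ and $n_i$ is a non-record, and a short induction on $k$ shows $S_k < S_{n_{i-1}}$; applying this at $k = n_i - 1$ and then adding the single term $a_{n_i} \leq C_0 - C_2$ gives the claim.

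Telescoping these bounds yields $S_{n_\ell} \leq \ell(C_0 - C_2)$, which combined with $(C_1 - C_2)N \leq S_N \leq S_{n_\ell}$ forces $\ell \geq \frac{C_1 - C_2}{C_0 - C_2}\, N$; any $\theta$ slightly smaller than $(C_1 - C_2)/(C_0 - C_2)$ then satisfies the strict inequality in the statement (and one may discard $n_1$ if it equals $1$ without harm). The one place that requires care, and is I think the only genuine observation needed, is the per-step bound by $C_0 - C_2$ in place of the naive $(n_i - n_{i-1})(C_0 - C_2)$: the naive bound telescopes to a control on $n_\ell$ only, not on $\ell$, so exploiting the fact that the intermediate non-record indices all sit below $S_{n_{i-1}}$ is what upgrades the estimate on $n_\ell$ into one on $\ell$.
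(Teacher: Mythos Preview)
Your argument is correct and is essentially the standard proof of the Pliss lemma: reduce to the centered sums $S_n=\sum_{j\le n}(b_j-C_2)$, take the record indices of $(S_n)$, and bound each record increment by $C_0-C_2$ to get $\ell\geq \dfrac{C_1-C_2}{C_0-C_2}\,N$. The induction showing $S_k<S_{n_{i-1}}$ for $n_{i-1}<k<n_i$ is exactly the point, and you identified it correctly.

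There is nothing to compare against in the paper itself: the authors do not prove Lemma~\ref{Lem:numberPliss} but simply refer to \cite[Lemma~3.1]{ABV00}. The argument there is the same record-time computation you wrote, so your proposal matches the cited proof. One cosmetic remark: the condition ``$1<n_1$'' in the statement appears to be a typo for ``$1\le n_1$'' (the paper immediately applies the lemma in Lemma~\ref{Lem:hyperbolitimefordiff} with no restriction excluding $n_1=1$), so your comment about possibly discarding $n_1=1$ is unnecessary in practice, though harmless.
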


We will use Lemma~\ref{Lem:numberPliss} to give some results for diffeomorphisms.

\begin{definition}(\textit{Hyperbolic time})
Given $\sigma <1$ and $x\in \overline{U} $, if
$$
\prod_{j=n-k+1}^n \|Df^{-1}/F(f^j (x))\| \leq \sigma^k,~~~\text{ for all}~~1 \leq k \leq n,
$$
then we say n is a $\sigma$-hyperbolic time for $x$.

\end{definition}

\begin{lemma}\label{Lem:hyperbolitimefordiff}
Given $0<\sigma_1<\sigma_2<1$, there is $\theta=\theta(\sigma_1,\sigma_2,f)>0$ such that for any $x$ and any $N\in{\mathbb N}$, if
$$\prod_{j=1}^N\|Df^{-1}/F(f^j(x))\|\le \sigma_1^N,$$
then, there are $1\le n_1<n_2<\cdots<n_\ell\le N$, where $\ell>\theta N$ such that $n_i$ is a $\sigma_2$-hyperbolic time for $x$, $1\le i\le \ell$.

\end{lemma}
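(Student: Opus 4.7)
The plan is to reduce Lemma~\ref{Lem:hyperbolitimefordiff} directly to the Pliss Lemma (Lemma~\ref{Lem:numberPliss}) by passing from the multiplicative formulation of hyperbolic times to an additive one via logarithms. Set
$$
b_j \;=\; -\log\|Df^{-1}/F(f^j(x))\|, \qquad j=1,\dots,N.
$$
The hypothesis $\prod_{j=1}^N\|Df^{-1}/F(f^j(x))\|\le \sigma_1^N$ becomes $\sum_{j=1}^N b_j \ge N\log(1/\sigma_1)$, so the natural choice is $C_1=\log(1/\sigma_1)$ and $C_2=\log(1/\sigma_2)$; since $0<\sigma_1<\sigma_2<1$ we have $C_1>C_2>0$ as required.

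Next I would fix the upper bound $C_0$. Because $f$ is a $C^1$ diffeomorphism of the compact manifold $M$ (and $F$ is a continuous sub-bundle on the compact set $\overline U$), the quantity $\|Df^{-1}/F(y)\|$ is bounded away from $0$ uniformly in $y\in\overline U$, hence $b_j$ is bounded above by some constant depending only on $f$ and $F$. Setting $C_0:=\max\{\log(1/\sigma_1),\;\sup_{y\in\overline U}-\log\|Df^{-1}/F(y)\|\}$ guarantees both $b_j\le C_0$ for every $j$ and $C_0\ge C_1$. Pliss's Lemma then yields $\theta=\theta(C_0,C_1,C_2)>0$, depending only on $\sigma_1,\sigma_2$ and $f$, together with integers $1\le n_1<\cdots<n_\ell\le N$ with $\ell>\theta N$ such that
$$
\sum_{j=n+1}^{n_i} b_j \;\ge\; C_2\,(n_i-n) \qquad \text{for all } 0\le n<n_i,\ i=1,\dots,\ell.
$$

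Finally I would translate this back multiplicatively. For any $1\le k\le n_i$, applying the inequality with $n=n_i-k$ gives
$$
\sum_{j=n_i-k+1}^{n_i}\log\|Df^{-1}/F(f^j(x))\| \;\le\; -k\log(1/\sigma_2) \;=\; k\log\sigma_2,
$$
that is, $\prod_{j=n_i-k+1}^{n_i}\|Df^{-1}/F(f^j(x))\|\le \sigma_2^k$, which is exactly the definition of $n_i$ being a $\sigma_2$-hyperbolic time for $x$. This completes the proof.

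There is really no technical obstacle here, as the statement is essentially the Pliss Lemma in disguise; the only point that needs a word of care is ensuring the uniform upper bound $C_0<\infty$ for the sequence $\{b_j\}$, which comes for free from compactness of $\overline U$ and continuity of $Df^{-1}$ restricted to the continuous bundle $F$. The constant $\theta$ inherits its dependence on $\sigma_1,\sigma_2,f$ solely through the Pliss constant $\theta(C_0,C_1,C_2)$.
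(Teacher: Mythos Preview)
Your proof is correct and follows essentially the same route as the paper: set $b_j=-\log\|Df^{-1}/F(f^j(x))\|$, take $C_1=-\log\sigma_1$, $C_2=-\log\sigma_2$, a uniform upper bound $C_0$ from compactness, and apply the Pliss Lemma directly. The only cosmetic difference is your explicit $\max$ in the definition of $C_0$ to force $C_0\ge C_1$, whereas the paper simply takes $C_0=\sup\big|\log\|Df^{-1}/F\|\big|$ and notes that $C_0\ge C_1$ follows automatically from the hypothesis (since the average of the $b_j$ is at least $C_1$).
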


\begin{proof}
This is an application of the Pliss Lemma (Lemma~\ref{Lem:numberPliss}) by taking $$b_j=-\log\|Df^{-1}/F(f^j(x))\|.$$

More precisely, by assumption,
we have
$$
\sum_{j=1}^N \log \|Df^{-1}/F(f^j(x))\|\le N\log \sigma_1.
$$
So
$$
\sum_{j=1}^N  b_j\geq(-\log \sigma_1)N.
$$
Now, we take $C_1=-\log \sigma_1,$ $C_2=-\log \sigma_2$ and $C_0=\sup \big|\log \|Df^{-1}/F\|\big|$. By the assumption, we have $C_0\geq C_1>C_2\geq0$ . Thus, Lemma~\ref{Lem:numberPliss} implies that there are $1\le n_1<n_2<\cdots<n_\ell\le N$ with $\ell>\theta N$ such that for every $n_i$, we have
$$
\sum_{j=n+1}^{n_i}  b_j\geq(-\log \sigma_2)(n_i-n) ~~~\text{for every}~~ 0\le n < n_i,
$$
which in other words,
$$
\prod_{j=n_i-k+1}^{n_i} \|Df^{-1}/F(f^j (x))\| \leq \sigma_2^k ~~~\text{for every}~~1 \leq k \leq n_i,
$$
that is to say $n_i$ is a $\sigma_2$-hyperbolic time for $x$, which completes the proof.

\end{proof}

We also need the following lemma of Pliss type which considers infinitely many times.

\begin{lemma}\label{Lem:infinitePliss}
For a sequence of real numbers $a_1, a_2, \cdots $, for $N\in \mathbb{N}$, if
$$
\sum_{i=1}^{n}a_i \geq 0,~~~ \text{for every }~n\ge N.
$$
Then there exists $1\leq k \le N$ such that
$$
\sum_{i=k}^n a_i \ge 0,~~~\text{for every }~n\ge k.
$$
\end{lemma}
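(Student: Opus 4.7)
The plan is to rephrase the conclusion in terms of the partial sums $T_n := \sum_{i=1}^{n} a_i$ (with $T_0 = 0$) and then pick $k-1$ to be an index realizing the minimum of $T_0, T_1, \ldots, T_{N-1}$. Since $\sum_{i=k}^{n} a_i = T_n - T_{k-1}$ for $n \geq k \geq 1$, the desired statement is equivalent to finding some $k \in \{1, \ldots, N\}$ such that $T_{k-1} \leq T_n$ for every $n \geq k$.

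The hypothesis gives $T_n \geq 0$ for every $n \geq N$, and one trivially has $T_0 = 0$. In particular the minimum of the finite set $\{T_0, T_1, \ldots, T_{N-1}\}$ is attained and is $\leq 0$. I would let $j_0 \in \{0, 1, \ldots, N-1\}$ be any index with $T_{j_0} = \min_{0 \leq j \leq N-1} T_j$, and set $k = j_0 + 1$, so $k \in \{1, \ldots, N\}$.

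For the verification I would split the range $n \geq k$ into two pieces. If $k \leq n \leq N-1$, then $T_n \geq T_{j_0} = T_{k-1}$ immediately from the choice of $j_0$. If $n \geq N$, then the hypothesis gives $T_n \geq 0$, while $T_{j_0} \leq T_0 = 0$ since $0$ was among the indices over which we minimized; combining yields $T_n \geq 0 \geq T_{k-1}$, as required.

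The argument is essentially bookkeeping, so no step constitutes a serious obstacle. The only point requiring a small amount of care is to include $T_0 = 0$ in the list of candidates for the minimum: this is what forces the minimal value to be nonpositive and therefore automatically dominated by the nonnegative tail $T_n$, $n \geq N$, so that the same $k-1$ works on both pieces of the range.
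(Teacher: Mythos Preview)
Your proof is correct and follows essentially the same approach as the paper's own proof: define the partial sums (the paper writes $S(n)$ where you write $T_n$), pick the index in $\{0,\ldots,N-1\}$ minimizing them, and set $k$ to be one more than that index. The only cosmetic difference is that the paper first minimizes over $\{0,\ldots,N\}$ and then observes the minimum can be taken in $\{0,\ldots,N-1\}$ since $S(0)=0\le S(N)$, whereas you minimize over $\{0,\ldots,N-1\}$ directly.
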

\begin{proof}
Denote by $S(n)=\sum_{i=1}^n a_i$, for every $n\geq 1$. By convention, one can define $S(0)=0$. By the hypothesis, $S(n)\geq 0$ for every $n\ge N $. We choose some $0 \le \ell \leq N$ such that $S(\ell)$ be the smallest number among the sequence of numbers $S(n)$, where $n$ takes from $0$ to $N$, that is
$$
S(\ell)=\min\{S(n): 0 \le n \le N\}.
$$
We can also restrict $0\le \ell \le N-1$ as $S(0)=0 \le S(N)$.
So $S(\ell) \le S(n)$ for every  $\ell < n \le N$, and also $S(\ell) \leq 0$
which implies that $S(n) \ge S(\ell)$ for every $n > N$.
Together, we obtain that $S(n) \ge S(\ell)$ for all $n > \ell$.
Now take $k=\ell +1$, we have $S(n) \ge S(k-1)$ for every $n\geq k$,
then
$$
\sum_{i=k}^n a_i =S(n)-S(k-1)\geq 0 ~~\text{for every}~ n \ge k.
$$
\end{proof}

Given $\lambda\in(0,1)$ and $N\in \mathbb{N}$, define
$$
\Lambda_\lambda=\left\{x\in U:~\limsup_{n\to\infty}\frac{1}{n}\sum_{i=1}^n\log\|Df^{-1}/F(f^i(x))\|\leq 2\log\lambda\right\};
$$

$$
\Lambda_{\lambda, N }=\left\{x\in \Lambda_\lambda: \frac{1}{n}\sum_{i=1}^{n}\log\|Df^{-1}/F(f^{i}(x))\|\leq\log \lambda<0,~~~ \forall n\geq N \right\}.
$$


As an application of Lemma \ref{Lem:infinitePliss}, we have the next Proposition which asserts that one can reduce the positive volume set in assumption of Theorem \ref{Theo-attractor} to some set $\Lambda_{\lambda,1}$ (non-invariant) also with positive volume. $\Lambda_{\lambda,1}$ can be manipulated easier.

\begin{proposition}\label{Pro:measureLebesgue}
Let
$$
\Lambda=\left\{x\in U: \limsup_{n\rightarrow \infty}\frac{1}{n}\sum_{i=1}^{n}\log\|Df^{-1}/F(f^{i}(x))\|<0\right\}.
$$
If ${\rm Leb}(\Lambda)>0$, then there exists a constant $\lambda\in(0,1)$, such that ${\rm Leb}(\Lambda_{\lambda, 1})>0$.

\end{proposition}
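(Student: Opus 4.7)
The plan is to peel off the statement in three layers, passing from $\Lambda$ to $\Lambda_\lambda$ to $\Lambda_{\lambda,N}$ and finally to $\Lambda_{\lambda,1}$. The main obstacle, and the only substantial step, is the last reduction, which I expect to execute using Lemma~\ref{Lem:infinitePliss}.

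First I would write $\Lambda$ as the countable union $\bigcup_{n\ge 1}\Lambda_{\lambda_n}$, where $\lambda_n\in(0,1)$ is chosen so that $2\log\lambda_n=-1/n$; since $\mathrm{Leb}(\Lambda)>0$, at least one $\Lambda_\lambda$ carries positive Lebesgue measure. Fixing this $\lambda$, the strict gap $2\log\lambda<\log\lambda<0$ implies that every $x\in\Lambda_\lambda$ lies in $\Lambda_{\lambda,N(x)}$ for some integer $N(x)$, so $\Lambda_\lambda=\bigcup_{N\ge 1}\Lambda_{\lambda,N}$, and hence $\mathrm{Leb}(\Lambda_{\lambda,N_0})>0$ for some $N_0\ge 1$.

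Now comes the key step. For $x\in\Lambda_{\lambda,N_0}$, set $a_i=\log\lambda-\log\|Df^{-1}/F(f^i(x))\|$, so the defining condition of $\Lambda_{\lambda,N_0}$ reads $\sum_{i=1}^n a_i\ge 0$ for all $n\ge N_0$. Lemma~\ref{Lem:infinitePliss} then supplies an index $k(x)\in\{1,\ldots,N_0\}$ with $\sum_{i=k(x)}^n a_i\ge 0$ for every $n\ge k(x)$. Reindexing by $j=i-k(x)+1$, this is precisely the statement that $y=f^{k(x)-1}(x)$ satisfies $\frac{1}{m}\sum_{j=1}^m\log\|Df^{-1}/F(f^j(y))\|\le\log\lambda$ for all $m\ge 1$. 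Combined with the forward invariance of $\Lambda_\lambda$ (the defining limsup is shift-invariant), this places $y$ in $\Lambda_{\lambda,1}$, so
\[
\Lambda_{\lambda,N_0}\subset\bigcup_{k=0}^{N_0-1}f^{-k}(\Lambda_{\lambda,1}).
\]

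Since $f$ is a $C^2$ diffeomorphism, each $f^{-k}$ preserves Lebesgue null sets, so if $\mathrm{Leb}(\Lambda_{\lambda,1})$ were zero, the finite union on the right would also be null, contradicting $\mathrm{Leb}(\Lambda_{\lambda,N_0})>0$. The only point requiring any care is verifying that the shifted point $y=f^{k(x)-1}(x)$ still belongs to $\Lambda_\lambda$, and not merely to the larger set cut out by the averaging bound; this is handled by the forward invariance of the limsup condition noted above.
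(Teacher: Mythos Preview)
Your proof is correct and follows essentially the same route as the paper: decompose $\Lambda$ into the $\Lambda_\lambda$'s, then into the $\Lambda_{\lambda,N}$'s, and use Lemma~\ref{Lem:infinitePliss} to push each $x\in\Lambda_{\lambda,N_0}$ forward by at most $N_0-1$ steps into $\Lambda_{\lambda,1}$, concluding by the non-singularity of $f$. If anything, you are slightly more careful than the paper, which asserts $f^{k-1}(x)\in\Lambda_{\lambda,1}$ without explicitly checking the $\Lambda_\lambda$-membership that you verify via shift-invariance of the $\limsup$.
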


\begin{proof}
Let $$\Lambda(k)=\left\{x\in U:~\limsup_{n\to\infty}\frac{1}{n}\sum_{i=1}^n\log\|Df^{-1}/F(f^i(x))\|\leq-\frac{1}{k}\right\},$$
then, we have  $\Lambda=\bigcup_{k=1}^{\infty}\Lambda(k)$  by the definition of $\Lambda$. This together with ${\rm Leb}(\Lambda)>0 $ implies that ${\rm Leb} (\Lambda_{\lambda})>0 $ for some $\lambda\in (0,1)$. Notice that $\Lambda_{\lambda} =\bigcup_{N=1}^{\infty}\Lambda_{\lambda, N }$ and $\Lambda_{\lambda, N }\subset \Lambda_{\lambda, N+1 } $ for all $N \in \mathbb{N}$, so there exists an $N\in\mathbb{N}$ such that ${\rm Leb}(\Lambda_{\lambda, N })>0.$

For any $x \in \Lambda_{\lambda, N}$,
$$
\sum_{i=1}^n\left(-\log \|Df^{-1}/F(f^{i}(x))\|-\log \lambda^{-1}\right)\ge 0,~~~\forall n\ge N.
$$
Set $a_i=-\log \|Df^{-1}/F(f^{i}(x))\|-\log \lambda^{-1}$, then
$$
\sum_{i=1}^n a_i \ge 0, ~~~\forall n\ge N.
$$
By applying Lemma \ref{Lem:numberPliss}, there exists some $1 \le k \le N$ such that
$$
\sum_{i=k}^n a_i \ge 0, ~~~\forall n\ge k.
$$
Thus,
$$
\sum_{i=k}^n\left(-\log \|Df^{-1}/F(f^{i}(x))\|-\log \lambda^{-1}\right)\ge 0,~~~\forall n\ge k.
$$
So, by the definition of $\Lambda_{\lambda,1}$, we have $f^{k-1}(x)\in \Lambda_{\lambda,1} $.

Now we make a partition of $\Lambda_{\lambda,N}$, let
$$\Lambda_{\lambda,N,j}=\left\{x\in \Lambda_{\lambda,N}:\;f^j(x)\in\Lambda_{\lambda,1}\right\},$$
then $ \Lambda_{\lambda,N} = \bigcup_{j=0}^{N-1} \Lambda_{\lambda,N,j} $ . Since ${\rm Leb }(\Lambda_{\lambda,N})>0$, we have ${\rm Leb}(\Lambda_{\lambda, N,j})>0$ for some $0 \leq j\leq N-1$. The fact that $f^j(\Lambda_{\lambda,N,j})\subset \Lambda_{\lambda,1} $ implies ${\rm Leb}(\Lambda_{\lambda,1})>0 $.

\end{proof}

\begin{proposition}\label{pro:hyperbolictimeforgood}
Given $0<\sigma_1<\sigma_2<1$, there is $\theta=\theta(\sigma_1,\sigma_2,f)\in(0,1)$ such that
for every $x\in\Lambda_{\sigma_1,1}$ and any $N\in \mathbb N $, there are $1\le n_1<n_2<\cdots<n_\ell\le N$, where $\ell>\theta N$ such that $n_i$ is a $\sigma_2$-hyperbolic time for $x$, $i=1,\cdots ,\ell$.

\end{proposition}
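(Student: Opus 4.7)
The plan is to recognize that the hypothesis defining membership in $\Lambda_{\sigma_1,1}$ is exactly what is needed to invoke Lemma~\ref{Lem:hyperbolitimefordiff} at every finite horizon $N$, so the proof reduces to a direct translation followed by a single application.

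More concretely, I would first unwind the definition: for $x\in\Lambda_{\sigma_1,1}$, by the definition of $\Lambda_{\lambda,N}$ with $\lambda=\sigma_1$ and $N=1$, one has
$$
\frac{1}{n}\sum_{i=1}^{n}\log\|Df^{-1}/F(f^{i}(x))\|\le \log\sigma_1\quad\text{for every }n\ge 1,
$$
which, exponentiating, is equivalent to
$$
\prod_{j=1}^{n}\|Df^{-1}/F(f^{j}(x))\|\le \sigma_1^{n}\quad\text{for every }n\ge 1.
$$
In particular, for any fixed $N\in\mathbb N$, the inequality $\prod_{j=1}^{N}\|Df^{-1}/F(f^{j}(x))\|\le\sigma_1^{N}$ holds, which is precisely the hypothesis of Lemma~\ref{Lem:hyperbolitimefordiff}.

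I would then simply apply Lemma~\ref{Lem:hyperbolitimefordiff} with the given $\sigma_1<\sigma_2$. It produces a constant $\theta=\theta(\sigma_1,\sigma_2,f)>0$, depending only on $\sigma_1$, $\sigma_2$ and the uniform bound $\sup|\log\|Df^{-1}/F\||$ (which is finite because $\overline{U}$ is compact and $f$ is $C^2$), and gives integers $1\le n_1<n_2<\cdots<n_\ell\le N$ with $\ell>\theta N$ such that each $n_i$ is a $\sigma_2$-hyperbolic time for $x$. Since $\theta$ does not depend on $x$ or on $N$, we obtain the conclusion of the proposition. Finally, if needed, one can shrink $\theta$ to lie in $(0,1)$ to match the stated range.

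Because this is essentially a direct corollary, there is no real obstacle; the only point worth double-checking is that the constant $\theta$ produced by Lemma~\ref{Lem:hyperbolitimefordiff} really is uniform over $x\in\Lambda_{\sigma_1,1}$ and over $N$, which it is, since the Pliss-type constant depends only on $C_0=\sup|\log\|Df^{-1}/F\||$, $C_1=-\log\sigma_1$, and $C_2=-\log\sigma_2$.
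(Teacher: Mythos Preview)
Your proposal is correct and follows essentially the same approach as the paper: unwind the definition of $\Lambda_{\sigma_1,1}$ to get $\prod_{j=1}^{N}\|Df^{-1}/F(f^{j}(x))\|\le \sigma_1^{N}$ for every $N$, then apply Lemma~\ref{Lem:hyperbolitimefordiff}. Your extra remarks on the uniformity of $\theta$ and on shrinking it into $(0,1)$ are fine and slightly more explicit than the paper's version.
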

\begin{proof}
For every $x\in \Lambda_{\sigma_1,1}$, by the definition we have that
$$
\frac{1}{n}\sum_{i=1}^{n}\log\|Df^{-1}/F(f^{i}(x))\|\leq\log \sigma_1<0,~~~ \forall n\geq 1,
$$
it is equivalent to

$$
\prod_{i=1}^N\|Df^{-1}/F(f^i(x))\|\le \sigma_1^N,~~~\text{for any }~~N\in{\mathbb N}.
$$
\\Now we can apply the Lemma~\ref{Lem:hyperbolitimefordiff} to end the proof.

\end{proof}

The above Proposition~\ref{Pro:measureLebesgue} and Proposition~\ref{pro:hyperbolictimeforgood} tell us that under the setting of Theorem~\ref{Theo-attractor} there exists a set with positive Lebesgue measure such that all the points there have infinitely many hyperbolic times, and these hyperbolic times have uniformly positive density.

\subsection{Adjusting constants}

We have the following theorem that asserts that if an iteration of a diffeomorphism $f$ has an SRB measure, then $f$ has an SRB measure itself. The proof is standard, hence omitted.

\begin{theorem}\label{thm:renormalized} For given $N\in \mathbb{N}$, if $\mu$ is an $SRB$ measure for $f^N$, then there exist some $SRB$ measure $\hat{\mu}$ for $f$. More precisely, we can take
$$\hat{\mu}=\frac{1}{N}\sum_{i=0}^{N-1}f_\ast^i\mu.$$
\end{theorem}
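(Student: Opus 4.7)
The plan is to verify that $\hat\mu$ satisfies Pesin's entropy formula for $f$ and then invoke the Ledrappier-Young characterization recalled in the introduction to conclude that $\hat\mu$ is an SRB measure for $f$. The key point is that Pesin unstable manifolds and their absolute-continuity structure depend only on the backward exponential contraction rates, which simply rescale by $N$ when passing between $f$ and $f^N$; consequently the SRB property transports well through both pushforward by $f^i$ and the averaging procedure.

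First I would verify that each $\mu_i := f_\ast^i\mu$ is itself an SRB measure for $f^N$. Since $f$ commutes with $f^N$, the measure $\mu_i$ is $f^N$-invariant; since $f^i$ is a $C^2$ diffeomorphism, it carries the Pesin unstable manifolds of $(f^N,\mu)$ onto those of $(f^N,\mu_i)$ and preserves absolute continuity of the conditional measures with respect to Lebesgue on the leaves. Applying Ledrappier-Young to each $\mu_i$ gives
$$
h_{\mu_i}(f^N) \;=\; \int \sum_j \lambda_j^+(f^N,x)\,d\mu_i(x),\qquad i=0,\ldots,N-1.
$$

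Next, a direct computation using $f_\ast^N\mu=\mu$ shows that $\hat\mu$ is $f$-invariant, and hence also $f^N$-invariant. Averaging the above identity over $i$ and using affineness of metric entropy on $f^N$-invariant measures together with linearity of the integral yields
$$
h_{\hat\mu}(f^N) \;=\; \int \sum_j \lambda_j^+(f^N,x)\,d\hat\mu(x).
$$
Finally, Abramov's formula gives $h_{\hat\mu}(f^N)=N\,h_{\hat\mu}(f)$, while Oseledec scaling gives $\lambda_j^+(f^N,x)=N\,\lambda_j^+(f,x)$ for $\hat\mu$-a.e.\ $x$. Dividing by $N$ produces Pesin's entropy formula for $(f,\hat\mu)$, and the positive Lyapunov exponent required by the SRB definition is inherited from $\mu$ through the same scaling.

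The only step that wants a little care, and the one I would flag as the main (minor) obstacle, is the compatibility between the notion of SRB with respect to $f$ and the notion of SRB with respect to $f^N$: one must check that the Pesin unstable manifolds used implicitly in the two definitions coincide, so that the absolute-continuity clause is unambiguous. This in turn reduces to the elementary fact that the Oseledec splitting and the backward contraction rates of $f^N$ are $N$-rescalings of those of $f$, and hence determine the same stable/unstable directions and the same Pesin manifolds.
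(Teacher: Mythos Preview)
The paper does not actually prove this theorem; it states that ``the proof is standard, hence omitted.'' Your proposal supplies a correct and natural way to fill in that omitted argument: pushing each $\mu_i=f^i_\ast\mu$ to an SRB measure for $f^N$, invoking the Ledrappier--Young equivalence to obtain the entropy formula for each $\mu_i$, averaging via the affineness of metric entropy, and then rescaling by $N$ through Abramov's formula and the $N$-scaling of Lyapunov exponents to obtain Pesin's formula for $(f,\hat\mu)$. The point you flag about the coincidence of Pesin unstable manifolds for $f$ and $f^N$ is indeed the only thing requiring a moment's thought, and your resolution via the rescaling of the Oseledec data is the standard one. Since there is no proof in the paper to compare against, one can only say your approach is one of the expected routes and is sound.
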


By Theorem~\ref{thm:renormalized}, for considering the existence of SRB measures, it suffices to consider $f^N$ for some integer $N$.

We need the following Proposition whose proof is similar to~\cite{cao03} and we omit it here also.

\begin{proposition}\label{weak contracting}
Let $\Lambda$ be a compact positively invariant set and $E\subset T_\Lambda M$ be a continuous $Df$-invariant bundle.
If there is a set $\Lambda'\subset \Lambda$ with total probability such that for any $x\in\Lambda'$, one has
$$
\liminf_{n\rightarrow \infty}\frac{1}{n}\log\|Df^n/E(x)\| \le 0
$$
then $\forall \varepsilon>0$, there exists  $N:=N(\varepsilon)\in \mathbb{N}$ such that
$$
\|Df^n/E(x)\| < {\rm e}^{n\varepsilon}
$$
for any $n \ge N$ and $x \in \Lambda$.
\end{proposition}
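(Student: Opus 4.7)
The plan is to argue by contradiction: I will use a Krylov--Bogolyubov construction to manufacture an $f$-invariant measure on $\Lambda$ whose top Lyapunov exponent along $E$ is strictly positive, which Kingman's subadditive ergodic theorem will then use to violate the total-probability hypothesis.

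First I would negate the conclusion: there exist $\varepsilon_0>0$, $n_k\to\infty$, and $x_k\in\Lambda$ with $\log\|Df^{n_k}/E(x_k)\|\ge n_k\varepsilon_0$. Positive invariance of $\Lambda$ places the empirical measures
$$
\mu_k=\frac{1}{n_k}\sum_{j=0}^{n_k-1}\delta_{f^j(x_k)}
$$
in the compact space of Borel probabilities on $\Lambda$, so I extract a weak-$\ast$ subsequential limit $\mu$. The identity $f_\ast\mu_k-\mu_k=(\delta_{f^{n_k}(x_k)}-\delta_{x_k})/n_k\to 0$ together with continuity of $f$ forces $\mu$ to be $f$-invariant.

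Next, writing $\phi_n(x):=\log\|Df^n/E(x)\|$, I note each $\phi_n$ is continuous on $\Lambda$ and satisfies the subadditive cocycle relation $\phi_{a+b}(x)\le\phi_a(x)+\phi_b(f^a(x))$. For each fixed $m$ I would iterate subadditivity along the $f^m$-orbit of $f^\ell(x_k)$ for each residue $\ell\in\{0,\dots,m-1\}$ (writing $n_k-\ell=q_\ell m+r_\ell$) and sum over $\ell$. Every index $j\in\{0,\dots,n_k-1\}$ is captured in exactly one residue class, with at most one index per class discarded near the end, so the sum produces
$$
\sum_{j=0}^{n_k-1}\phi_m(f^j(x_k))\;\ge\; m\,\phi_{n_k}(x_k)-C_m,
$$
where $C_m$ depends only on $m$ and on $\max_{0\le j<m}\sup_\Lambda|\phi_j|<\infty$ (finite by compactness of $\Lambda$). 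Dividing by $n_k$ and using $\phi_{n_k}(x_k)\ge n_k\varepsilon_0$ gives $\int\phi_m\,d\mu_k\ge m\varepsilon_0-C_m/n_k$; letting $k\to\infty$, valid since $\phi_m$ is continuous, I obtain $\int\phi_m\,d\mu\ge m\varepsilon_0$ for every $m$.

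Finally I would apply Kingman's subadditive ergodic theorem to $(\phi_n)$ under $\mu$: the limit $\lambda(x):=\lim_n\frac{1}{n}\phi_n(x)$ exists $\mu$-a.e., and $\int\lambda\,d\mu=\inf_m\frac{1}{m}\int\phi_m\,d\mu\ge\varepsilon_0>0$. Hence $\mu(\{\lambda>0\})>0$, while total probability gives $\mu(\Lambda')=1$; picking $x$ in the positive-measure intersection produces a point where the full limit $\lim_n\frac{1}{n}\phi_n(x)$ exists and is strictly positive, contradicting $\liminf_n\frac{1}{n}\phi_n(x)\le 0$ on $\Lambda'$. The main obstacle I expect is the residue-class bookkeeping in the subadditivity estimate: one has to verify that the boundary summands $\phi_\ell$, $\phi_{r_\ell}$ together with the handful of indices near $n_k$ that get skipped contribute only an $O(1)$ error independent of $n_k$, which is the unique place where compactness of $\Lambda$ and continuity of $E$ are genuinely needed.
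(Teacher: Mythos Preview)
Your argument is correct and is precisely the approach of Cao~\cite{cao03}, which is what the paper defers to (the paper omits the proof of this proposition, stating only that it is similar to~\cite{cao03}). The Krylov--Bogolyubov construction of an invariant measure from the offending orbit segments, the residue-class subadditivity estimate giving $\int\phi_m\,d\mu\ge m\varepsilon_0$ for every $m$, and the invocation of Kingman's theorem to force a positive limit on a set of positive measure---contradicting the $\liminf\le 0$ hypothesis on the total-probability set $\Lambda'$---are exactly the ingredients used there, and your boundary-term bookkeeping is handled correctly by the uniform bound $\max_{0\le j<m}\sup_\Lambda|\phi_j|<\infty$ coming from compactness and continuity of $E$.
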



Thus, under the assumptions of the main theorems, by considering a large iteration of $f$, we can add some standing assumptions for $f$:

\begin{enumerate}

\item[H:]

there are constants $\varepsilon_0>0$, $\xi, \lambda_1, \lambda_2, \lambda_3 \in (0,1)$ such that
\begin{itemize}
  \item $\|Df/E(x)\|<{\rm e}^{{\varepsilon}_0}$ for every $x\in \overline{U}$;

  \item $0<\lambda_1<\lambda_1{\rm e}^{\varepsilon_0}<\lambda_2<\lambda_3={\lambda_2{\rm e}^{\varepsilon_0}}/{b^{\xi}}<1$, where\footnote{If $V_1,V_2$ are two $d$-dimensional linear space and $A:V_1\rightarrow V_2$ is a linear map, we define the \emph{mininorm}
$$
m(A)=\inf_{v\neq 0}\frac{\|Av\|}{\|v\|}.
$$
If the linear map $A$ is invertible, then one obtains $m(A)=\|A^{-1}\|^{-1}$.} $b=\inf_{x\in \overline{U}} m(Df/F(x)) >0$;

  \item ${\rm Leb}(\Lambda_{\lambda_1,1})>0$.
\end{itemize}


\end{enumerate}

For every $x\in \Lambda_{\lambda_1,1}$, by Proposition~\ref{pro:hyperbolictimeforgood} we know that there are infinitely many $\lambda_2$-hyperbolic times for $x$. Let $n$ be a $\lambda_2$-hyperbolic time for $x$, then by the definition of hyperbolic time and the standing assumption, we have
$$
\prod_{j=n-k}^{n-1}\frac{\|Df/E(f^j(x))\|}{m(Df/F(f^j(x)))} \le  ({e^{\varepsilon_0}\lambda_2})^k, ~~\text{for every}~~ 1 \le k \le n,
$$
furthermore, we get
$$
\prod_{j=n-k}^{n-1}\frac{\|Df/E(f^j(x))\|}{m(Df/F(f^j(x)))^{1+\xi}} \le \left (\frac{e^{\varepsilon_0}\lambda_2}{b^{\xi}}\right)^k={\lambda_3}^k,~~\text{for every}~~ 1 \le k \le n.
$$
Which means that if $n$ is a $\lambda_2$-hyperbolic time for $x$, we have
$$
\prod_{j=n-k}^{n-1}\frac{\|Df/E(f^j(x))\|}{m\left(Df/F(f^j(x))\right)^{1+\xi}} \le {\lambda_3}^k,~~\text{for every}~~ 1 \le k \le n.
$$

\subsection{Sub-manifolds tangent to cone field and their iterations}




Denote by $B_r(x)=\{y\in M: d(x,y)\leq r\}$ the closed ball of radius $r$ around $x$.

We can assume $M$ is an embedded manifold in $\mathbb{R}^N$ for $N$ large enough by the Whitney Embedding theorem.
For a subspace $A\subset \mathbb{R}^N $ and a vector $v\in \mathbb{R}^N $, writing $dist(v,A)=\min_{w\in A}\|v-w\|$ as the length of the distance between $v$ and its orthogonal projection to $A$. If $A,B$ are any two subspaces of $\mathbb{R}^N $, define the distance between them (see \cite[Chapter 2.3]{bp02} and \cite{byu87}),
$$
dist(A,B)=\max\left\{\max_{u\in A,\|u\|=1}dist(u,B),\max_{v\in B,\|v\|=1}dist(v,A)\right\},
$$
in particular, if subspaces $A$ and $B$ have the same dimension, we have
$$
\max_{u\in A,\|u\|=1}dist(u,B)=\max_{v\in B,\|v\|=1}dist(v,A).
$$


\begin{definition}(\textit{Cone field})
Let $0<a<1$, define the $F$-direction cone field $\mathcal{C}_a^F=\left(\mathcal{C}_a^F(x)\right)_{x\in U} $ of width $a$ by
$$
\mathcal{C}_a^F(x)=\Big\{v=v_E+v_F\in E(x)\oplus F(x) \; such \; that\; \|v_E\|\leq a\|v_F\|\Big\}.
$$
One can define the $E$-direction cone field $\mathcal{C}_a^E$ of width $a$ in a similar way.

\end{definition}
For an embedded sub-manifold $D$, we say that it is \emph{tangent to ${\cal C}_a^F$} if $T_x D\subset {\cal C}_a^F(x)$ for any $x\in D$.

%
%

If the splitting is \emph{dominated} as in \cite{ABV00}, then the $F$-direction cone field is invariant by $Df$. Now the splitting here is only \emph{continuous}.

In~\cite{ABV00}, the authors assume all the systems there have the dominated splitting property such that one can use the invariance
 property of the cones to obtain several nice geometry properties of the iterations of some embedded sub-manifold tangent to $F$-direction
cone field with small fixed width. Indeed, the invariance property ensures that all the images of these kinds of sub-manifolds are also
tangent to the $F$-direction cone field of the same width as before. Moreover, the angles between bundle $F$ and tangent spaces of these
iterated sub-manifolds are decreasing as iterated times increasing.

In our setting, due to the lack of domination we have no invariance property of the cone fields. Consequently, one can not iterate
every sub-manifold tangent to $F$-direction cone field such that all its iterations have the nice geometry properties like dominated case. However, it is enough for
us to iterate sub-manifold around the neighborhood of some particular points (points of $\Lambda_{\lambda_1,1}$).
For this reason, we shall study systems with domination in local sense. More precisely, we consider average dominated orbit segment (Definition \ref{Def;average dominated} below), and built the invariance property of
cones in weak sense, which means that for any disk containing a starting point of some orbit segment with average dominated, if it is tangent to $F$-direction cone field, then so do
their iterations whatever they admit some uniform small radius around the average dominated orbit. In fact, analogous to dominated case, the angles between $F$-bundle and iterated disks are decreasing exponentially. Furthermore, with the help of hyperbolic time
we can show that the iterated disks are backward contracted in exponential rate on hyperbolic times.


\begin{definition}\label{Def;average dominated} An orbit segment $\left(x,f^{n}(x)\right)$ is called $\gamma$-average dominated if for any $1\le i\le n$, we have
$$\prod_{j=0}^{i-1}\frac{\|Df/E(f^{j}(x))\|}{m\left(Df/F(f^{j}(x))\right)}\le \gamma^{i}.$$

\end{definition}

By the standing assumption, we have

\begin{lemma}\label{fromweakcontract}
For any point $x\in \Lambda_{\lambda_1,1}$, the orbit segment $\left(x, f^{n}(x)\right)$ is $\lambda_1{\rm e}^{\varepsilon_0}$-average dominated for any $n\in \mathbb{N}$.

\end{lemma}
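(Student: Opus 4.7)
The plan is to unwind the definition of $\Lambda_{\lambda_1,1}$ into a purely multiplicative estimate on $m(Df/F)$, combine it with the uniform bound on $\|Df/E\|$ from the standing hypothesis (H), and then read off the definition of average domination. The key identity is the duality
$$\|Df^{-1}/F(f^{i}(x))\| = m\!\left(Df/F(f^{i-1}(x))\right)^{-1},$$
which lets us translate forward expansion of $F$ into backward contraction of $F^{-1}$.

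First, I would fix an arbitrary $x\in \Lambda_{\lambda_1,1}$ and an integer $i\ge 1$. By the definition of $\Lambda_{\lambda_1,1}$ applied with $n=i$,
$$\frac{1}{i}\sum_{j=1}^{i}\log\|Df^{-1}/F(f^{j}(x))\|\le \log\lambda_{1},$$
which is equivalent to $\prod_{j=1}^{i}\|Df^{-1}/F(f^{j}(x))\|\le\lambda_{1}^{i}$. Using the identity above, this re-indexes to
$$\prod_{j=0}^{i-1}\frac{1}{m\!\left(Df/F(f^{j}(x))\right)}\le \lambda_{1}^{i}.$$

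Second, from assumption (H) we have the pointwise bound $\|Df/E(f^{j}(x))\|<{\rm e}^{\varepsilon_{0}}$, valid on $\overline U$ and hence at every $f^{j}(x)$. Multiplying out,
$$\prod_{j=0}^{i-1}\frac{\|Df/E(f^{j}(x))\|}{m\!\left(Df/F(f^{j}(x))\right)}\le {\rm e}^{i\varepsilon_{0}}\prod_{j=0}^{i-1}\frac{1}{m\!\left(Df/F(f^{j}(x))\right)}\le {\rm e}^{i\varepsilon_{0}}\lambda_{1}^{i}=\left(\lambda_{1}{\rm e}^{\varepsilon_{0}}\right)^{i}.$$
Since $i$ was an arbitrary integer in $[1,n]$, Definition \ref{Def;average dominated} is verified with $\gamma=\lambda_{1}{\rm e}^{\varepsilon_{0}}$, completing the argument.

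There is no genuine obstacle here; the lemma is essentially bookkeeping. The only point to be careful about is the index shift in the duality identity between $\|Df^{-1}/F(f^{j}(x))\|$ and $m(Df/F(f^{j-1}(x)))$, so that the product over $j=1,\dots,i$ correctly becomes a product over $j=0,\dots,i-1$. After that, the conclusion is just multiplying the uniform $E$-bound ${\rm e}^{\varepsilon_{0}}$ into the $F$-estimate, which is precisely why the standing assumption was calibrated so that $\lambda_{1}{\rm e}^{\varepsilon_{0}}<\lambda_{2}<1$.
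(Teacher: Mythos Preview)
Your proof is correct and is exactly what the paper has in mind; the paper itself simply says ``The proof of this lemma is to use the definition directly,'' and your argument is precisely that unwinding, with the correct index shift in the identity $\|Df^{-1}/F(f^{j}(x))\|=m(Df/F(f^{j-1}(x)))^{-1}$.
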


The proof of this lemma is to use the definition directly.

Similar to the case of dominated splittings, we have the following two lemmas for an average dominated orbit segment.

\begin{lemma}\label{Lem:bowen-ball-dominated}

For any $0<\gamma_1<\gamma_2<1$, there is $r=r(\gamma_1,\gamma_2)>0$ such that for any $x$, if $\left(x,f^{n}(x)\right)$ is $\gamma_1$-average dominated, then for any $y\in U$ satisfying $d\left(f^{j}(x),f^{j}(y)\right)\le r$ for any $0\le j\le n-1$, one has that $\left(y,f^{n}(y)\right)$ is $\gamma_2$-average dominated.

\end{lemma}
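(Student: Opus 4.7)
The plan is to exploit the continuity of the splitting and of $Df$, both of which make the function
\[
g(z) \;:=\; \log\frac{\|Df/E(z)\|}{m(Df/F(z))}
\]
continuous on the compact set $\overline{U}$, hence uniformly continuous. The standing assumption $b = \inf_{x\in\overline{U}} m(Df/F(x)) > 0$ guarantees that $g$ is finite-valued and continuous at every point (no division by zero), and the continuity of the bundles $E,F$ makes $z \mapsto \|Df/E(z)\|$ and $z \mapsto m(Df/F(z))$ continuous.

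The quantitative step is as follows. Given $0 < \gamma_1 < \gamma_2 < 1$, set $\delta := \log(\gamma_2/\gamma_1) > 0$. By uniform continuity of $g$ on $\overline{U}$, choose $r = r(\gamma_1,\gamma_2) > 0$ such that
\[
d(z_1,z_2) \le r \;\Longrightarrow\; |g(z_1) - g(z_2)| \le \delta.
\]
Then, given $x$ with $(x, f^n(x))$ $\gamma_1$-average dominated and $y$ whose orbit $r$-shadows that of $x$ up to time $n-1$, I would apply this estimate pointwise along the orbit and sum: for every $1 \le i \le n$,
\[
\sum_{j=0}^{i-1} g(f^j(y)) \;\le\; \sum_{j=0}^{i-1} g(f^j(x)) \;+\; i\,\delta \;\le\; i\log\gamma_1 + i\log(\gamma_2/\gamma_1) \;=\; i\log\gamma_2,
\]
where the inequality $\sum_{j=0}^{i-1} g(f^j(x)) \le i\log\gamma_1$ is just the logarithmic reformulation of the $\gamma_1$-average dominated hypothesis. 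Exponentiating yields the desired bound $\prod_{j=0}^{i-1}\frac{\|Df/E(f^j(y))\|}{m(Df/F(f^j(y)))} \le \gamma_2^i$, which says precisely that $(y, f^n(y))$ is $\gamma_2$-average dominated.

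I do not anticipate a genuine obstacle here, since the argument is essentially a uniform-continuity-plus-telescoping estimate and does not rely on any invariance of cones or on hyperbolic times. The one place to be careful is verifying that $g$ is well-defined and continuous on all of $\overline{U}$: this follows because the splitting is $Df$-invariant and continuous on $\overline{U}$, so the two functions $z\mapsto\|Df/E(z)\|$ and $z\mapsto m(Df/F(z))$ are continuous, and the denominator is uniformly bounded below by $b>0$. Once this is in hand, the choice of $r$ depends only on the modulus of continuity of $g$ and on the ratio $\gamma_2/\gamma_1$, as required by the statement.
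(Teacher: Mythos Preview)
Your proposal is correct and is essentially the paper's own argument: both choose $r$ by uniform continuity on $\overline{U}$ so that nearby points have ratio $\|Df/E\|/m(Df/F)$ differing multiplicatively by at most $\gamma_2/\gamma_1$, then multiply (or, in your version, sum the logs) along the orbit. The only cosmetic difference is that the paper bounds the numerator and denominator separately, each by a factor $\sqrt{\gamma_2/\gamma_1}$, whereas you bound the single function $g$ directly; the net effect is identical.
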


\begin{proof} For any constants $0<\gamma_1<\gamma_2<1$, by the uniform continuity of $Df$ and bundles, there exists $r=r(\gamma_1,\gamma_2)>0 $ such that
$$
\sqrt{\gamma_1/\gamma_2}\leq \frac{\|Df/E(x)\|}{\|Df/E(y)\|}\le \sqrt{\gamma_2/\gamma_1};
$$
and
$$
\sqrt{\gamma_1/\gamma_2}\leq \frac{m(Df/F(x))}{m(Df/F(y))}\le \sqrt{\gamma_2/\gamma_1},
$$
whenever $d(x,y)\le r$.

Then, by hypothesis, we obtain the following:
\begin{eqnarray*}
\prod_{j=0}^{i-1}\frac{\|Df/E(f^{j}(y))\|}{m(Df/F(f^{j}(y)))}
&\le& \prod_{j=0}^{i-1}\frac{\sqrt{\gamma_2/\gamma_1}\|Df/E(f^{j}(x))\|}{\sqrt{\gamma_1/\gamma_2}m(Df/F(f^{j}(x)))}\\
&\le& \gamma_2^{i}
\end{eqnarray*}
for any $1\le i \le n$. That is to say, $\left(y,f^{n}(y)\right)$ is $\gamma_2$-average dominated.
\end{proof}

\begin{lemma}\label{Lem:cone-inside}

For any $\lambda\in(0,1)$ and $a\in(0,1)$, if $\left(x,f^{n}(x)\right)$ is $\lambda$-average dominated, then $Df^{i}(x){\cal C}_a^F(x)\subset {\cal C}_{\lambda^{i} a}^F\left(f^{i}(x)\right),$ for every $ 1\le i \le n $.

\end{lemma}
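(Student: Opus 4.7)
The plan is a direct computation using the $Df$-invariance of the splitting $E\oplus F$ together with the definition of $\lambda$-average dominance. Let $v\in {\cal C}_a^F(x)$ and write $v=v_E+v_F$ with $v_E\in E(x)$, $v_F\in F(x)$ and $\|v_E\|\le a\|v_F\|$. Since the splitting is $Df$-invariant, for every $1\le i\le n$ we have $Df^i(x)v_E\in E(f^i(x))$ and $Df^i(x)v_F\in F(f^i(x))$, so the decomposition of the image along $E(f^i(x))\oplus F(f^i(x))$ is exactly
$$Df^i(x)v=Df^i(x)v_E+Df^i(x)v_F.$$

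Next I would estimate the two pieces separately by iterating along the orbit. Using the operator norm bound along $E$ and the mininorm bound along $F$ (the latter applied to the nonzero vector $v_F$, which remains in $F$ at each step by invariance), one obtains
$$\|Df^i(x)v_E\|\le \prod_{j=0}^{i-1}\|Df/E(f^j(x))\|\cdot\|v_E\|,\qquad \|Df^i(x)v_F\|\ge \prod_{j=0}^{i-1}m\bigl(Df/F(f^j(x))\bigr)\cdot\|v_F\|.$$
Taking the ratio, the right-hand side is precisely the product that appears in Definition~\ref{Def;average dominated}, so the $\lambda$-average dominance hypothesis yields
$$\frac{\|Df^i(x)v_E\|}{\|Df^i(x)v_F\|}\le \lambda^i\cdot\frac{\|v_E\|}{\|v_F\|}\le \lambda^i a,$$
which is exactly the statement $Df^i(x)v\in {\cal C}_{\lambda^i a}^F(f^i(x))$.

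There is no serious obstacle here: the argument only uses the invariance of the bundles, the trivial submultiplicativity $\|Df^i/E\|\le\prod\|Df/E(f^j)\|$, and the dual inequality $m(Df^i/F)\ge\prod m(Df/F(f^j))$, both of which are immediate along orbits where the splitting is preserved. The only point worth stating carefully in the write-up is that $v_F\neq 0$ whenever $v\in{\cal C}_a^F(x)\setminus\{0\}$ (so the ratio is well defined), which follows from $\|v_E\|\le a\|v_F\|$ with $a<1$.
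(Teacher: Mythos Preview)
Your proposal is correct and follows essentially the same argument as the paper: decompose $v=v_E+v_F$, use $Df$-invariance of the splitting, bound $\|Df^i v_E\|$ and $\|Df^i v_F\|$ by the products of $\|Df/E(f^j(x))\|$ and $m(Df/F(f^j(x)))$ respectively, and apply the definition of $\lambda$-average dominance to control the ratio. Your version is in fact slightly more carefully stated, since you make explicit the submultiplicativity of the norms along the orbit and the point that $v_F\neq 0$ for nonzero $v\in{\cal C}_a^F(x)$.
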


\begin{proof}
Denote $v_0\in {\cal C}_a^F(x)$ by $v_0=v_E+v_F$, where $v_E\in E(x)$, $v_F\in F(x)$ and $\|v_E\|/\|v_F\|\le a$. Since the orbit segment $\left(x,f^{n}(x)\right)$ is $\lambda$-average dominated,

\begin{eqnarray*}
\frac{\|Df^{i}(x)(v_E)\|}{\|Df^{i}(x)(v_F)\|}
&\leq& \prod_{j=0}^{i-1}\frac{\|Df/E(f^{j}(x))\|\|v_E\|}{m(Df/F(f^{j}(x)))\|v_F\|}\\
&\le & \lambda^i\frac{\|v_E\|}{\|v_F\|}\\
&\le & \lambda^i a,
\end{eqnarray*}
for any $1\le i\le n$.
By invariance, $Df^{i}(x)(v_E)\in E(f^{i}(x))$ and $ Df^{j}(x)(v_F)\in F(f^{i}(x))$, the above inequality means that $Df^{i}(x){\cal C}_a^F(x)\subset {\cal C}_{\lambda^{i} a}^F\left(f^{i}(x)\right),$ for any  $1\le i \le n $.

\end{proof}

As a consequence of Lemma~\ref{Lem:bowen-ball-dominated} and Lemma~\ref{Lem:cone-inside}, for sub-manifold tangent to $F$-direction cone field we have the following fact:

\begin{lemma}\label{Lem:submanifold-cone}

Given $\lambda\in(0,1)$, there exists $r>0$ such that for any $a\in (0,1)$, if $\left(x,f^{n}(x)\right)$ is $\lambda$-average dominated, then for any sub-manifold $D\ni x$ tangent to ${\cal C}_a^F$ such that $d_{f^{i}D}\left(f^{i}(x),\partial (f^{i}(D))\right)\le r$ for any $0\le i\le n-1$, then
\begin{itemize}
  \item $f^{j}(D)$ is tangent to ${\mathcal C}_{\lambda^{j/2} a}^F $ for any $1\le j\le n$;

  \item $dist(F(f^{j}(y),T_{f^{j}(y)}f^{j}(D))\le \lambda^{j/2} a$, for every $y\in D$ and $1\le j\le n$.
\end{itemize}

\end{lemma}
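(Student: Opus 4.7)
The plan is to apply the two previous lemmas pointwise on $D$. By hypothesis the orbit of $x$ is $\lambda$-average dominated, so by Lemma~\ref{Lem:bowen-ball-dominated} with $\gamma_1=\lambda$ and $\gamma_2=\lambda^{1/2}$ there is a radius $r_0>0$ such that every orbit $(y,f^n(y))$ that stays within ambient distance $r_0$ of $(x,f^n(x))$ is automatically $\lambda^{1/2}$-average dominated. I would then fix $r\in(0,r_0/2]$, possibly further shrunk to account for the comparison between the intrinsic distance along $f^iD$ and the ambient distance in $M$, so that the control on the size of the iterated discs in the hypothesis forces $d(f^i(x),f^i(y))\le r_0$ for every $y\in D$ and every $0\le i\le n-1$.

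With this choice, for each $y\in D$ the orbit segment $(y,f^n(y))$ is $\lambda^{1/2}$-average dominated. Applying Lemma~\ref{Lem:cone-inside} at $y$ with cone width $a$ then yields
\[
Df^j(y)\,{\cal C}_a^F(y)\subset {\cal C}_{(\lambda^{1/2})^j a}^F(f^j(y))={\cal C}_{\lambda^{j/2}a}^F(f^j(y))
\]
for every $1\le j\le n$. Since $T_yD\subset {\cal C}_a^F(y)$ by the tangency hypothesis on $D$, the invariance relation $T_{f^j(y)}f^j(D)=Df^j(y)T_yD$ immediately gives the first bullet: $T_{f^j(y)}f^j(D)\subset {\cal C}_{\lambda^{j/2}a}^F(f^j(y))$.

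For the second bullet, I would unfold the cone definition. Any unit vector $v\in T_{f^j(y)}f^j(D)$ decomposes along the splitting as $v=v_E+v_F$ with $\|v_E\|\le \lambda^{j/2}a\|v_F\|$. Since the angle between $E$ and $F$ is uniformly bounded below on the compact set $\overline U$, the ratio $\|v_E\|/\|v\|$ agrees with the genuine ambient distance from the line spanned by $v$ to $F(f^j(y))$ up to a fixed multiplicative constant, and hence (both subspaces having the same dimension) with the Grassmannian distance $dist(F(f^j(y)),T_{f^j(y)}f^j(D))$ as well, so the estimate $dist(F(f^j(y)),T_{f^j(y)}f^j(D))\le \lambda^{j/2}a$ follows.

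The main obstacle, and essentially the only one, is this last conversion between the cone-width measured in the splitting coordinates and the ambient Grassmannian distance used in the conclusion. To obtain the clean constant $\lambda^{j/2}a$ rather than a constant multiple of it, I would either start from a slightly smaller initial cone at $x$ to absorb the angle constant, or replace $\lambda^{1/2}$ by $\lambda^{1/2-\varepsilon}$ in the invocation of Lemma~\ref{Lem:bowen-ball-dominated}, so that the extra constant is swallowed by the improved contraction rate for large $j$. Apart from this bookkeeping, the argument is just a direct pointwise combination of Lemmas~\ref{Lem:bowen-ball-dominated} and \ref{Lem:cone-inside}.
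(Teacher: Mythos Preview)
Your setup and your proof of the first bullet match the paper exactly: choose $r=r(\lambda,\lambda^{1/2})$ from Lemma~\ref{Lem:bowen-ball-dominated}, conclude that each orbit $(y,f^n(y))$ with $y\in D$ is $\lambda^{1/2}$-average dominated, and then apply Lemma~\ref{Lem:cone-inside} pointwise.

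For the second bullet, however, your approach and the paper's diverge, and the difference is exactly the ``bookkeeping'' obstacle you flag. You decompose a unit vector $v\in T_{f^j(y)}f^j(D)$ as $v=v_E+v_F$ and try to bound $\|v_E\|$; this forces you to control $\|v_F\|$ in terms of $\|v\|=1$, which brings in a factor depending on the angle between $E$ and $F$. Neither of your proposed fixes removes it: shrinking the initial cone width is not allowed because the statement is ``for any $a\in(0,1)$'', and replacing $\lambda^{1/2}$ by $\lambda^{1/2-\varepsilon}$ only absorbs the constant for large $j$, not for all $1\le j\le n$ as required.

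The paper sidesteps the constant entirely by measuring the distance in the other direction. Given a unit vector $w\in F(f^j(y))$, write $w=Df^j(y)v_F/\|Df^j(y)v_F\|$ for some $v_F\in F(y)$; since $T_yD$ is a graph over $F(y)$ inside the cone, there is a unique $v_0=v_E+v_F\in T_yD$ with this $F$-component and $\|v_E\|\le a\|v_F\|$. Then $v_j=Df^j(y)v_0$ lies in $T_{f^j(y)}f^j(D)$, hence so does the (non-unit) vector $v_j/\|Df^j(y)v_F\|$, and
\[
dist\big(w,\,T_{f^j(y)}f^j(D)\big)\le\Big\|w-\frac{v_j}{\|Df^j(y)v_F\|}\Big\|=\frac{\|Df^j(y)v_E\|}{\|Df^j(y)v_F\|}\le \lambda^{j/2}\,\frac{\|v_E\|}{\|v_F\|}\le \lambda^{j/2}a.
\]
Since $w$ was an arbitrary unit vector in $F(f^j(y))$ and the two subspaces have the same dimension, this gives the clean bound $dist(F(f^j(y)),T_{f^j(y)}f^j(D))\le\lambda^{j/2}a$ with no extra factor. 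The key trick is comparing $w$ to a vector in the tangent space that is \emph{not} normalized, so the ratio $\|Df^j v_E\|/\|Df^j v_F\|$ appears directly.
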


\begin{proof}

We take $r=r(\lambda,\lambda^{1/2})$ as in Lemma~\ref{Lem:bowen-ball-dominated}. For the sub-manifold $D$ and $v_0\in T_y D$ for any $y\in D$. Denote by $v_0=v_E+v_F$, where $v_E\in E(y)$ and $v_F\in F(y)$ satisfying $\|v_E\|/\|v_F\|\le a$. Since the orbit segment $\left(y,f^{n}(y)\right)$ is $\lambda^{1/2}$-average dominated by Lemma~\ref{Lem:bowen-ball-dominated}, we get the
first statement of this Lemma by applying Lemma~\ref{Lem:cone-inside} directly.

Now we will prove the second statement. Since $v_j=Df^{j}(y)(v_0)=Df^{j}(y)(v_E)+Df^{j}(y)(v_F)$ for every $1\le j\le n$, by the definition of \emph{dist} in previous remarks, we have
\begin{eqnarray*}
dist\left(\frac{Df^{j}(y)(v_F)}{\|Df^{j}(y)(v_F)\|},T_{f^j(y)}f^{j}(D)\right)
&\leq & \left\|\frac{Df^{j}(y)(v_F)}{\|Df^{j}(y)(v_F)\|}-\frac{v_j}{\|Df^{j}(y)(v_F)\|}\right\|                                 \\
& = &\frac{\|Df^{j}(y)(v_F)-v_j\|}{\|Df^{j}(y)(v_F)\|}\\
& = &\frac{\|Df^{j}(y)(v_E)\|}{\|Df^{j}(y)(v_F)\|}\\
& \leq &\lambda^{j/2} \cdot a.
\end{eqnarray*}
By the arbitrariness of $v_0$, we know
$$
dist \left(F(f^{j}(y)),T_{f^{j}(y)}f^{j}(D) \right)=\max_{w\in F(f^{j}y), \|w\|=1}dist(w, T_{f^{j}(y)}f^{j}(D))\leq \lambda^{j/2} \cdot a.
$$
\end{proof}

\begin{lemma}\label{Lemma:hyperbolictangentcone}
Given $0 <\gamma_1<\gamma_2<1$, there are $r_0>0$ and $a_0>0$ such that for any $r\in(0,r_0]$ and $a\in (0,a_0]$, if $\left(x,f^{n}(x)\right)$ is $\gamma_1$-average dominated and $n$ is a $\gamma_2$-hyperbolic time for $x$, then for any embedded sub-manifold $\widetilde{D}$ containing $x$ with radius larger than $r$ around $x$, there is a simply connected sub-manifold $D \subset \widetilde{D}$ containing $x$ as its interior such that
\begin{itemize}
  \item $f^{k}(D)\subset B_r\left(f^{k}(x)\right)$ for every $0 \le k \le n$;
  \item $f^{n}(D)$ is a disk of radius $r$ around $f^{n}(x)$;
  \item $d_{f^{n-k}D}\left(f^{n-k}(x), f^{n-k}(y)\right)\le (\gamma_2)^{k/2}d_{f^{n}D}\left(f^{n}(x),f^{n}(y)\right)$ for any point $y \in D$.
\end{itemize}
\end{lemma}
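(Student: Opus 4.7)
I take $\widetilde{D}$ tangent to $\mathcal{C}_{a}^{F}$ (the implicit hypothesis consistent with the surrounding machinery) and define $D$ to be the connected component of $x$ in $\widetilde{D}\cap\bigcap_{k=0}^{n}f^{-k}(B_{r}(f^{k}(x)))$, so that the first bullet is automatic. I pick $r_{0}>0$ small (using uniform continuity of $Df$ and of the splitting) so that, whenever $d(f^{j}(x),f^{j}(y))\le r\le r_{0}$ for all $0\le j\le n-1$, Lemma~\ref{Lem:bowen-ball-dominated} upgrades the $\gamma_{1}$-average domination of $(x,f^{n}(x))$ to $\sqrt{\gamma_{1}}$-average domination of $(y,f^{n}(y))$, and the hyperbolic-time estimate $\prod_{j=n-k+1}^{n}\|Df^{-1}/F(f^{j}(x))\|\le \gamma_{2}^{k}$ transfers to $\prod_{j=n-k}^{n-1}m(Df/F(f^{j}(y)))\ge \gamma_{2}^{-3k/4}$. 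I pick $a_{0}\in(0,1)$ small enough that $(1+a_{0})/(1-a_{0})<\gamma_{2}^{-1/4}$. Then Lemma~\ref{Lem:submanifold-cone} with $\lambda=\sqrt{\gamma_{1}}$ forces each iterate $f^{j}(D)$ to be tangent to $\mathcal{C}_{\gamma_{1}^{j/2}a}^{F}$ for $0\le j\le n$.

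\textbf{The backward contraction.} Fix $y\in D$ and $v\in T_{f^{n}(y)}f^{n}(D)$. I write $v=Df^{n}(y)u$ with $u=u_{E}+u_{F}\in E(y)\oplus F(y)$ and $\|u_{E}\|\le a\|u_{F}\|$; by $Df$-invariance of the sub-bundles, $Df^{-k}(f^{n}(y))v=Df^{n-k}(y)u_{E}+Df^{n-k}(y)u_{F}$ is the $E\oplus F$-decomposition at $f^{n-k}(y)$. Setting $A_{i}=\prod_{j=0}^{i-1}\|Df/E(f^{j}(y))\|$ and $B_{i}=\prod_{j=0}^{i-1}m(Df/F(f^{j}(y)))$, the $\sqrt{\gamma_{1}}$-average domination gives $A_{i}/B_{i}\le \gamma_{1}^{i/2}$; the transferred hyperbolic-time bound gives $B_{n-k}/B_{n}\le \gamma_{2}^{3k/4}$; and $\|v\|\ge(1-a\gamma_{1}^{n/2})B_{n}\|u_{F}\|$. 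Chaining these through the ratio identity $A_{n-k}/B_{n}=(A_{n-k}/B_{n-k})(B_{n-k}/B_{n})$ gives
$$
\|Df^{-k}(f^{n}(y))v\|\le \gamma_{2}^{3k/4}\,\frac{1+a\gamma_{1}^{(n-k)/2}}{1-a\gamma_{1}^{n/2}}\,\|v\|\le \gamma_{2}^{k/2}\|v\|,
$$
by the choice of $a_{0}$. Integrating this pointwise contraction along a length-minimizing curve in $f^{n}(D)$ from $f^{n}(x)$ to $f^{n}(y)$ and pulling back by $f^{-k}$ yields the third bullet.

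\textbf{Full disk and the main obstacle.} Taking $k=n$ in the integrated contraction forces $d_{\widetilde{D}}(x,y)\le \gamma_{2}^{n/2}r<r$ for every $y\in D$, so $D$ lies strictly in the intrinsic interior of $\widetilde{D}$; therefore the boundary of $D$ can only arise from $f^{-n}(\partial B_{r}(f^{n}(x)))$, and $f^{n}(D)$ is exactly the intrinsic $r$-disk around $f^{n}(x)$. The main obstacle is the pointwise backward estimate itself, because $\|Df^{-k}/E\|$ has no a priori upper bound; the key trick is to route everything through the preimage $u\in T_{y}D$, using invariance of $E$ to rewrite the troublesome $E$-part of $Df^{-k}v$ as the tame $Df^{n-k}u_{E}$ (controlled by $A_{n-k}$) and then tying $A_{n-k}$ to the reference norm $\|v\|\approx B_{n}\|u_{F}\|$ via the ratio identity above, so that $\sqrt{\gamma_{1}}$-average domination and the hyperbolic-time lower bound on $B_{n}/B_{n-k}$ close the estimate with slack $\gamma_{2}^{-k/4}$.
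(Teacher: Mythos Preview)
Your argument is correct, and the overall architecture (define $D$ as the connected component of $x$ in $\widetilde{D}\cap\bigcap_{k}f^{-k}(B_r(f^k(x)))$, prove the backward contraction on it, then deduce that the intermediate boundary constraints are inactive so $f^n(D)$ is the full $r$-disk) is sound. It differs from the paper's proof in two respects.

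First, the paper builds $D$ inductively, setting $D_i$ equal to the component of $f(D_{i-1})\cap B_r(f^i(x))$ containing $f^i(x)$, and then argues by contradiction that $D_n$ already contains an intrinsic $r$-disk; your definition as a single intersection is cleaner and avoids the contradiction step. Second, and more substantively, the backward contraction is obtained by a different mechanism. The paper's key estimate is the single inequality $m(Df/T_{z}f^jD)\ge\sqrt{\gamma_2}\,m(Df/F(f^j(x)))$ whenever $z$ is $r_0$-close to $f^j(x)$ and $T_zf^jD$ is $a_0$-close to $F(z)$; chaining this and invoking the $\gamma_2$-hyperbolic time at $x$ directly gives the $\gamma_2^{k/2}$ contraction, with no need to split vectors or to transfer either the domination or the hyperbolic time to $y$. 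Your route instead decomposes $u\in T_yD$ as $u_E+u_F$, uses invariance of the bundles to write $Df^{-k}v=Df^{n-k}u_E+Df^{n-k}u_F$, and closes the estimate by combining the $\sqrt{\gamma_1}$-average domination of $(y,f^n(y))$ (for the $E$-piece) with a transferred $\gamma_2^{3/4}$-hyperbolic time at $y$ (for the $F$-piece). This is more bookkeeping but makes transparent exactly why domination along $E$ is needed at all; the paper's single inequality (1) hides that ingredient inside the choice of $a_0$.

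One minor point: your ``full disk'' step compares the intrinsic distance $d_{f^nD}$ with the ambient ball $B_r(f^n(x))$ used in the definition of $D$, so strictly you need that these are uniformly comparable for disks tangent to the narrow cone; this is standard and the paper's proof glosses over the same issue. Also, your invocation of Lemma~\ref{Lem:submanifold-cone} is not actually needed for your contraction estimate, since you only use $T_yD\subset\mathcal{C}_a^F(y)$ at time zero; the cone-tangency of the forward iterates is a byproduct rather than an input.
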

\begin{proof}
By the uniform continuity of $Df$ and the bundles, there exist constants $r_0>0$ and $a_0>0$, such that
$$
\frac{m(Df/{\widetilde F}(y))}{m(Df/F(x))} \geq \sqrt{\gamma_2}, \quad \eqno(1)
$$
whenever $d(x,y)\leq r_0$ and $dist(\widetilde F(y),F(y))\le a_0$, and also for every $n$, the orbit segment $\left(y,f^n(y)\right)$ is $\gamma_2$-average dominated whenever $d(f^ix,f^iy)\le r_0$ for any $0\le i \le n-1$.

For any $r\in (0,r_0]$ and $a\in (0,a_0]$ fixed, let $\widetilde{D}$ be an embedded sub-manifold satisfying $d_{\widetilde{D}}(x,\partial\widetilde{D})>r$. Define $D_i$ as the connected component of $f(D_{i-1})\cap B_r\left(f^i(x)\right)$ containing $f^i(x)$ inductively for any $1\le i\le n$, where $D_0$ is the connected component of $\widetilde{D}\cap B_r(x)$ containing $x$, and by construction we know $d_{D_0}(x,\partial D_0)\ge r$.

Now we will firstly show that the the sub-manifold $D_n$ contains some disk of radius $r$.
By the construction, we have $D_i\subset B_r(f^i(x))$ for $0\le i\le n$, and $f^{-k}(D_n) \subset D_{n-k}$ for every $0 \le k \le n$. Then Lemma~\ref{Lem:submanifold-cone} implies that all the pre-images $\{f^{-k}(D_n)\}_{0<k\le n}$ are tangent to the cone field with width $a$, respectively.
We will argue by absurd: we assume by contradiction that $D_n$ has radius less than $r$, then there exists some point $y_n\in \partial(D_n)$ such that $d_{D_n}(f^n(x),y_n)< r$. Define $y_{n-k}=f^{-k}(y_n)$ for every $0\le k \le n$, then $y_i\in D_i\subset B_r(f^i(x))$, for every $0\le i \le n-1$. Thus, we can choose a sequence of points $z_k\in f^{-(k+1)}(D_n)$ and apply the inequality $(1)$ to get the following estimation
\begin{eqnarray*}
d_{f^{-k}D_n}\left(f^{n-k}(x),y_{n-k}\right) &\ge & m(Df/T_{z_k}(f^{-k-1}D_{n})d_{f^{-k-1}D_n}(f^{n-k-1}(x),y_{n-k-1})\\
& \geq &\sqrt{\gamma_2}m(Df/F(f^{n-k-1}(x)))d_{f^{-k-1}D_n}(f^{n-k-1}(x),y_{n-k-1}),
\end{eqnarray*}
for every $0\le k \le n-1$. Consequently,
$$
d_{D_n}(f^n(x),y_n)\ge (\sqrt{\gamma_2})^k \prod_{j=n-k}^{n-1} m(Df/F(f^j(x)))d_{f^{-k}(D_n)}\left(f^{n-k}(x),y_{n-k}\right),
$$
for every $1\le k \le n$. As $n$ is a $\gamma_2$-hyperbolic time for $x$, we know
$$
\prod_{j=n-k}^{n-1}m(Df/F(f^j(x)))\ge \gamma_2^{-k}.
$$
So
$$
d_{D_n}\left(f^n(x),y_n\right) \ge (\gamma_2^{-1/2})^kd_{f^{-k}D_n}\left(f^{n-k}(x),y_{n-k}\right).\quad \eqno(2)
$$
By the assumption $d_{D_n}(f^n(x),y_n)<r$, we have all the points $y_i$ are contained in the interior of $B_r(f^i(x))$, and $d_{D_i}(f^i(x),y_i)\le {\gamma_2}^k\cdot r<r$, $0\le i \le n-1$. So $y_0 \in \partial(D_0)$ and $d_{D_0}(x,y_0)\ge r$, a contradiction.
Therefore, the radius of $D_n$ is larger than $r$. Consequently, we can take a disk $\widetilde{D_n}$ contained in $D_n$ with radius $r$.

Let $D=f^{-n}(\widetilde{D_n})$, then $D$ satisfies the first two properties by our construction immediately. The last inequality about the backward contracting property can be deduced
similarly to the process of the proof of inequality $(2)$, which is a consequence of the assumption that $n$ is a hyperbolic time for $x$ and the fact that sub-manifold $f^i(D)$ is tangent to the cone field (with width smaller than $a$) around $f^i(x)$ with radius not bigger than $r$, for every $0\le i\le n$. So we can apply the estimation $(1)$ inductively.
\end{proof}

\subsection{Distortion bounds and H\"older curvature at hyperbolic times}

\begin{proposition}\label{diskproperty}
There exist constants $a>0,r>0$ such that if $x\in \Lambda_{\lambda_1,1}$ and $n$ is a $\lambda_2$-hyperbolic time for $x$, for any sub-manifold $D$ tangent to $\mathcal{C}_a^F$ with radius larger than r around $x$, we have
\begin{itemize}
  \item $d_{f^{n-k}(D)}\left(f^{n-k}(x),f^{n-k}(y)\right)\le {\lambda_2}^{k/2}d_{f^n(D)}\left(f^n(x),f^n(y)\right)$ ~~for any  $0\le k \le n$;
  \item $dist\left(T_{f^j(y)}f^j(D),F(f^j(y))\right)\le {\lambda_2}^j \cdot a$ ~~for every $0\le j \le n$,
  \end{itemize}
whenever $y\in D$ such that $d_{f^n(D)}\left(f^n(x),f^n(y)\right)\le r$.
\end{proposition}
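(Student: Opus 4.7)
The strategy is to read Proposition \ref{diskproperty} as a specialization of Lemma \ref{Lemma:hyperbolictangentcone}, combined with the cone-invariance estimates from Lemmas \ref{Lem:cone-inside} and \ref{Lem:bowen-ball-dominated}, to the concrete constants provided by the standing assumption (H). First I would fix constants: by Lemma \ref{fromweakcontract}, every orbit segment $(x,f^n(x))$ with $x\in\Lambda_{\lambda_1,1}$ is $\lambda_1 e^{\varepsilon_0}$-average dominated, and (H) gives $\lambda_1 e^{\varepsilon_0}<\lambda_2$. Setting $\gamma_1=\lambda_1 e^{\varepsilon_0}$ and $\gamma_2=\lambda_2$, I would apply Lemma \ref{Lemma:hyperbolictangentcone} to obtain thresholds $r_0,a_0>0$ and also apply Lemma \ref{Lem:bowen-ball-dominated} with the same $\gamma_1,\gamma_2$ to obtain a threshold $r_1>0$. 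Then I pick $r\in(0,\min\{r_0,r_1\}]$ and $a\in(0,a_0]$, shrinking further if necessary so that the small-distortion hypothesis needed to compare $\|Df^{-1}/T_zS\|$ with $\|Df^{-1}/F(f^j(x))\|$ is in force whenever $d(z,f^j(x))\le r$ and $T_zS$ is $a$-close to $F(z)$.

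Next, given $x$, $n$ and $D$ as in the hypotheses, Lemma \ref{Lemma:hyperbolictangentcone} delivers a simply connected sub-disk $D^{\ast}\subset D$ containing $x$ in its interior such that $f^k(D^{\ast})\subset B_r(f^k(x))$ for every $0\le k\le n$, with $f^n(D^{\ast})$ a disk of radius $r$ around $f^n(x)$, and the backward-contraction inequality valid on $D^{\ast}$. For any $y\in D$ satisfying $d_{f^n(D)}(f^n(x),f^n(y))\le r$, a minimizing path from $f^n(x)$ to $f^n(y)$ inside $f^n(D)$ has length at most $r$ and, since $f^n(D^{\ast})$ is the intrinsic $r$-disk in $f^n(D)$ around $f^n(x)$, must stay inside $f^n(D^{\ast})$; hence $y\in D^{\ast}$. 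The first bullet then follows from the third conclusion of Lemma \ref{Lemma:hyperbolictangentcone} applied at $y$. For the second bullet, since the full orbit of $y$ up to time $n$ stays within distance $r$ of that of $x$, Lemma \ref{Lem:bowen-ball-dominated} (applied with $\gamma_1,\gamma_2$ as above) shows that $(y,f^n(y))$ is itself $\lambda_2$-average dominated; Lemma \ref{Lem:cone-inside} then gives $T_{f^j(y)}f^j(D)\subset \mathcal{C}_{\lambda_2^j a}^F(f^j(y))$, which translates directly into $dist(T_{f^j(y)}f^j(D),F(f^j(y)))\le \lambda_2^j\cdot a$.

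The main conceptual obstacle is the well-known mutual dependence between the three pieces of information one wants to extract: orbit-size control $f^k(D^{\ast})\subset B_r$, tangent-angle control in $\mathcal{C}_a^F$, and backward contraction at the hyperbolic time $n$—each of these uses the others. That entire circularity is already resolved, once and for all, by the backward induction in the proof of Lemma \ref{Lemma:hyperbolictangentcone}, which crucially uses that $n$ is a $\lambda_2$-hyperbolic time. Once that lemma is in hand, the present proposition reduces to the bookkeeping sketched above; the only nontrivial point is the verification that any $y$ satisfying $d_{f^n(D)}(f^n(x),f^n(y))\le r$ lies inside the sub-disk $D^{\ast}$, which follows from the connectedness of minimizing geodesics in $f^n(D)$ and from $f^n(D^{\ast})$ being an intrinsic $r$-disk around $f^n(x)$.
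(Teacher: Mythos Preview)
Your proposal is correct and follows essentially the same route as the paper: the paper's own proof is the single sentence ``It can be deduced from Lemma~\ref{fromweakcontract} and Lemma~\ref{Lemma:hyperbolictangentcone},'' and what you have written is precisely an unpacking of that deduction. Your derivation of the second bullet via Lemma~\ref{Lem:bowen-ball-dominated} followed by Lemma~\ref{Lem:cone-inside} is exactly the mechanism of Lemma~\ref{Lem:submanifold-cone} (which in turn is invoked inside the proof of Lemma~\ref{Lemma:hyperbolictangentcone}), so there is no genuine divergence of method, only a slight difference in which intermediate lemma you name.
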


\begin{proof} It can be deduced from Lemma \ref{fromweakcontract} and Lemma \ref{Lemma:hyperbolictangentcone}.
\end{proof}


We will discuss \emph{bounded distortion}, it plays a crucial role in the proof of the existence of SRB measures. Now we use the assumption: $F$ is H\"{o}lder continuous.

\begin{proposition}\label{pro:bounded distortion}
There exist $a>0$, $r>0$ and $\mathcal{K}>0$ such that for any $C^1$ sub-manifold $D$ tangent to $\mathcal{C}_a^F$ with radius larger than r around $x\in \Lambda_{\lambda_1,1}$, and $n\geq 1$ is a $\lambda_2$-hyperbolic time for $x$, then
$$
\frac{1}{\mathcal{K}}\leq\frac{|\det Df^n/T_yD|}{|\det Df^n/T_xD|}\leq \mathcal{K}
$$
for every $y\in D$ such that $d_{f^nD}(f^nx,f^ny)\leq r $.
\end{proposition}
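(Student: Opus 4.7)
The plan is to telescope the log-distortion and bound each factor by a Lipschitz estimate in the base point combined with a Lipschitz estimate in the Grassmannian, after which the H\"older regularity of $F$ together with the backward contraction in Proposition~\ref{diskproperty} turns each term into a summable geometric tail. First I would choose $a$ and $r$ no larger than the constants produced by Proposition~\ref{diskproperty} and write
$$
\log \frac{|\det Df^n/T_yD|}{|\det Df^n/T_xD|}=\sum_{j=0}^{n-1}\Bigl(\log\bigl|\det Df/T_{f^j(y)}f^j(D)\bigr|-\log\bigl|\det Df/T_{f^j(x)}f^j(D)\bigr|\Bigr).
$$
Since $f$ is $C^2$ and the tangent planes in play vary in a compact part of the Grassmannian bundle over $\overline U$, the function $(p,V)\mapsto \log|\det Df_p/V|$, read either in local charts or through the embedding $M\hookrightarrow\R^N$ set up earlier, is Lipschitz in each of its two arguments. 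Each summand is therefore controlled by
$$
C_1\,d\bigl(f^j(x),f^j(y)\bigr)+C_2\,\mathrm{dist}\!\left(T_{f^j(y)}f^j(D),T_{f^j(x)}f^j(D)\right).
$$

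Next I would estimate the Grassmannian term by inserting $F(f^j(y))$ and $F(f^j(x))$ via the triangle inequality: the second clause of Proposition~\ref{diskproperty} controls the two outer pieces by $\lambda_2^j a$, and the $\xi$-H\"older continuity of $F$ turns the middle piece into $C_F\, d(f^j(x),f^j(y))^\xi$. Using the first clause of Proposition~\ref{diskproperty} to bound $d(f^j(x),f^j(y))\le d_{f^j D}(f^j(x),f^j(y))\le \lambda_2^{(n-j)/2} r$, one arrives at
$$
\mathrm{dist}\!\left(T_{f^j(y)}f^j(D),T_{f^j(x)}f^j(D)\right)\le 2\lambda_2^j a+C_F\,\lambda_2^{(n-j)\xi/2} r^\xi.
$$

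Finally, substituting these bounds into the telescoped sum splits it into three geometric series, with ratios $\lambda_2^{1/2}$, $\lambda_2$ and $\lambda_2^{\xi/2}$; each converges to a constant uniform in $n$, $x$ and $y$, so the logarithm of the distortion ratio is bounded by some absolute $L$ and $\mathcal K:=e^L$ suffices. The main obstacle, and the only place where the \emph{H\"older} (not merely continuous) hypothesis on $F$ genuinely enters, is the triangle-inequality step for the Grassmannian distance: it produces the exponent $\xi$ on $d(f^j(x),f^j(y))$ instead of a cleaner linear factor, and the argument survives only because $\lambda_2^{\xi/2}<1$ keeps the resulting tail summable. This is exactly the structural reason why $\xi$ is built into the definition $\lambda_3=\lambda_2 e^{\varepsilon_0}/b^\xi$ in the standing assumption~H.
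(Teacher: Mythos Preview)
Your argument is correct and follows essentially the same route as the paper: telescope the log-distortion, pass through the bundle $F$ via the second clause of Proposition~\ref{diskproperty}, invoke the H\"older continuity of $F$ for the cross term, and use the backward contraction from the first clause to make every tail geometrically summable. The only organisational difference is that the paper applies the triangle inequality at the level of the full products, splitting $\log|\det Df^n/T_yD|-\log|\det Df^n/T_xD|$ into the three pieces $(T_yD\to F(y))+(F(y)\to F(x))+(F(x)\to T_xD)$ before telescoping each, whereas you telescope first and then split each summand; the bounds are the same.

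One small correction to your closing remark: the $\xi$ in the standing assumption~H (and in $\lambda_3=\lambda_2 e^{\varepsilon_0}/b^\xi$) is the H\"older exponent for the \emph{curvature of the iterated disks}, used in Proposition~\ref{Holder curvature}, not the H\"older exponent of the bundle $F$. The paper denotes the latter by $\beta$, and the two constants play separate roles.
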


\begin{proof}
Choose $a,r$ that satisfy the condition of Proposition \ref{diskproperty}, without loss of generality, we suppose $r<1$, then one obtains
\begin{eqnarray*}
\left|\log \frac{|\det {Df}^n/T_yD|}{|\det{Df}^n/F(y)|}\right|& \leq & \sum_{i=0}^{n-1}\left|\log|\det Df/T_{f^iy}f^iD|-\log|\det Df/F(f^i(y))|\right|\\
& \leq & \sum_{i=0}^{n-1}R_1 dist(T_{f^iy}f^i D, F(f^i(y)))\\
& \leq & \sum_{i=0}^{n-1}R_1 {\lambda_2}^i a\\
& \leq & R_1 \cdot \frac{a}{1- \lambda_2},
\end{eqnarray*}
where $R_1$ is a universal constant depending only on $f$. Especially, by taking $x=y$, we have
$$
\left|\log \frac{|\det{Df}^n/F(x)|}{|\det {Df}^n/T_xD|}\right| \leq R_1\cdot \frac{a}{1- \lambda_2}.
$$
Since bundle $F$ is H\"{o}lder by assumption, we may suppose $x\mapsto F(x)$ is $\beta$-H\"{o}lder continuous
for some $0<\beta\leq 1$. Therefore, we have the following estimation
\begin{eqnarray*}
\left|\log \frac{|\det {Df}^n/F(y)|}{|\det {Df}^n/F(x)|}\right|& \leq &\sum_{i=0}^{n-1}\left|\log|\det Df/F(f^i(x))|-\log|\det Df/F(f^i(y))|\right|\\
& \leq & \sum_{i=0}^{n-1}R_2d(f^i(x),f^i(y))^{\beta}\\
& \leq & \sum_{i=0}^{n-1}R_2d_{f^iD}(f^i(x), f^i(y))^{\beta},
\end{eqnarray*}
\\where $R_2$ is the H\"older constant for $\log|\det Df/F|$. By Proposition~\ref{diskproperty},  we obtain
\begin{eqnarray*}
\left|\log \frac{|\det Df^n/F(y)|}{|\det Df^n/F(x)|}\right|& \leq &\sum_{i=0}^{n-1}R_2d_{f^iD}(f^i(x),f^i(y))^{\beta}\\
& \leq & R_2 \sum_{i=0}^{n-1}\left[(\lambda_2)^{\frac{n-i}{2}}d_{f^nD}(f^n(x), f^n(y))\right]^{\beta}\\
& \leq & R_2 \sum_{i=0}^{n-1}({\lambda_2}^{\frac{\beta}{2}})^{n-i}r^{\beta}\\
& \leq & R_2 \cdot \frac{{\lambda_2}^{\frac{\beta}{2}}r^\beta}{1-{\lambda_2}^{\frac{\beta}{2}}}.
\end{eqnarray*}
\\With all the inequalities above, it follows that
\begin{eqnarray*}
\left|\log\frac{|\det {Df}^n/T_yD|}{|\det {Df}^n/T_xD|}\right|&\leq& \left|\log \frac{|\det {Df}^n/T_yD|}{|\det {Df}^n/F(y)|}\right|+\left|\log \frac{|\det {Df}^n/F(y)|}{|\det {Df}^n/F(x)|}\right|\\
&+& \left|\log \frac{|\det {Df}^n/F(x)|}{|\det {Df}^n/T_xD|}\right|\\
& \leq & 2 R_1 \cdot \frac{a}{1- \lambda_2}+ R_2 \cdot \frac{{\lambda_2}^{\frac{\beta}{2}}r^\beta}{1-{\lambda_2}^{\frac{\beta}{2}}}.
\end{eqnarray*}
\\Now it suffices to take
$$\mathcal{K}=\exp\left(2R_1\frac{a}{1- \lambda_2}+ R_2 \cdot \frac{{\lambda_2}^{\frac{\beta}{2}}}{1-{\lambda_2}^{\frac{\beta}{2}}}\right). $$
\end{proof}

For an embedded $C^1$ sub-manifold $D$, we say this sub-manifold is $C^{1+\xi}$ or the tangent bundle $TD$ is $\xi$-H\"older continuous if $x\mapsto T_x D$ defines a H\"older continuous subsection (with H\"older exponent $\xi $) from $D$ to the Grassmannian bundles over $D$. 
We will discuss in local coordinates. By the compactness of $M$ we can choose $\delta_0 >0$ small and fixed in advance, such that for any $x\in M$ the inverse of exponential map $\exp_x^{-1}$ is well defined on the $\delta_0$ neighborhood of $x$. Denote by $V_x$ the corresponding neighborhood of the origin of $T_x M$, then we identify these two neighborhoods.

For every $a>0$, up to shrinking $\delta_0$ such that for any $y \in D \cap V_x$, $T_y D$ is parallel to a unique graph of some linear map $L_x(y)$ from $T_x D$ to $E(x)$, whenever $D$ is tangent to cone field $\mathcal{C}_{a}^F$. Now we can describe the H\"older property of tangent bundle in local coordinate form.

\begin{definition} For constants $C>0$ and $\xi \in (0,1]$ fixed in standing assumption $({\rm H})$, if $D$ is tangent to the cone field $\mathcal{C}_{a}^F$, we say that the tangent bundle $TD$ is $(C,\xi)$-H\"older continuous if
$$
\|L_x(y)\| \le Cd_D(x,y)^{\xi} ~~~\text{for every}~~y\in D\cap V_x.
$$
\end{definition}



Then, for given $C^{1+\xi}$ sub-manifold $D$ tangent to the $F$-direction cone field, we define its \emph{H\"older curvature}
$$
\mathcal{H}_c(D)=\inf \bigg\{C>0: TD~\text{is} ~(C,\xi)\text{-H\"older continuous} \bigg\}.
$$

In next section, we will iterate $C^2$ disks tangent to the $F$-direction cone field and then consider the limit condition of the iterated disks. The next Proposition makes one can apply the Ascoli-Arzela theorem to get the accumulated disks of hyperbolic times which we will prove are actually the unstable disks.
\begin{proposition}\label{Holder curvature}
There exist constants $0<\lambda_4<1$, $\mathcal{L}>0 $, $a>0$ and $r>0$ such that for any given $C^{1+\xi}$ sub-manifold $\widetilde{D}$ tangent to $\mathcal{C}_a^F$ with radius larger than r around $x\in \Lambda_{\lambda_1,1}$, if $n$ is a $\lambda_2$-hyperbolic time for $x$, then there is a sub-manifold $D\subset \widetilde{D} $ containing $x$ such that $f^n(D)$ is contained in $B_r(f^n(x))$ and the H\"older curvature of $f^n(D)$ satisfies
$$
\mathcal{H}_c(f^n(D))\le \lambda_4^n \mathcal{H}_c(\widetilde{D})+ \frac{\mathcal{L}}{1-{\lambda}_4}.
$$
As a consequence, $\mathcal{H}_c(f^n(D))< 2\mathcal{L}/ (1-{\lambda}_4)$  when the $\lambda_2$-hyperbolic time $n$ large enough.

\end{proposition}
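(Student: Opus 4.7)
The plan is to reduce the statement to a one-step recursion on the $\xi$-H\"older coefficient of the tangent bundle along $f^{j}(D)$, and then iterate, using the hyperbolic-time inequality to absorb the intermediate non-contracting factors. First, apply Proposition~\ref{diskproperty} (with the constants $a, r$ fixed there) to select the sub-manifold $D \subset \widetilde{D}$ around $x$ such that $f^{k}(D) \subset B_{r}(f^{k}(x))$ for all $0 \le k \le n$, each $f^{k}(D)$ is tangent to the narrowing cone field $\mathcal{C}_{\lambda_{2}^{k}a}^{F}$, and the backward intrinsic-distance contraction holds. Since $D$ is a sub-disk of $\widetilde{D}$, the H\"older coefficient of its tangent bundle is dominated by $\mathcal{H}_{c}(\widetilde{D})$, so we take $C_{0} := \mathcal{H}_{c}(\widetilde{D})$ as the initial datum of the recursion.

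The heart of the proof is a one-step estimate of the form
$$C_{j+1} \le \mu_{j}\, C_{j} + L_{0}, \qquad \mu_{j} := \frac{\|Df/E(f^{j}(x))\|}{m(Df/F(f^{j}(x)))^{1+\xi}},$$
where $C_{j}$ denotes the $\xi$-H\"older coefficient of the map $y \mapsto L_{f^{j}(x)}(y)$ describing $T_{y}(f^{j}D)$ as a graph over $T_{f^{j}(x)}(f^{j}D)$, and $L_{0}$ is a constant depending only on $f$ and the H\"older norms of the bundles $E, F$. To derive it, work in the exponential chart at $f^{j}(x)$, write $f^{j}(D)$ locally as the graph of a $C^{1+\xi}$ map over $F(f^{j}(x))$, and decompose $Df(f^{j}(x))$ in block form with respect to the invariant splitting $E \oplus F$. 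The linear-algebra piece — transporting a graph by this block map and re-expressing it as a graph over $F(f^{j+1}(x))$ — multiplies the slope by $\|Df/E\|/m(Df/F)$, while converting intrinsic distances on $f^{j}(D)$ to those on $f^{j+1}(D)$ contributes an extra $m(Df/F)^{-\xi}$ on the distance raised to the $\xi$-th power; together these give $\mu_{j}$. The additive $L_{0}$ absorbs the quadratic correction from $f \in C^{2}$ and the $\xi$-H\"older oscillation of $E, F$ across the disk; it stays bounded uniformly in $j$ precisely because $f^{k}(D) \subset B_{r}(f^{k}(x))$ and the cone widths remain $\le a$.

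Unwinding the recursion yields
$$C_{n} \le \Big(\prod_{j=0}^{n-1}\mu_{j}\Big)\,C_{0} + L_{0}\sum_{i=0}^{n-1}\prod_{j=i+1}^{n-1}\mu_{j}.$$
The computation displayed in standing assumption~(H) — the very reason for introducing the exponent $\lambda_{3} = \lambda_{2}\mathrm{e}^{\varepsilon_{0}}/b^{\xi}$ — shows that at a $\lambda_{2}$-hyperbolic time $n$ one has $\prod_{j=n-k}^{n-1}\mu_{j} \le \lambda_{3}^{k}$ for every $1 \le k \le n$. Substituting $k = n$ in the first product and $k = n-1-i$ in each summand, and summing the resulting geometric series, gives
$$C_{n} \le \lambda_{3}^{n}\,\mathcal{H}_{c}(\widetilde{D}) + \frac{L_{0}}{1-\lambda_{3}}.$$
Setting $\lambda_{4} := \lambda_{3}$ and $\mathcal{L} := L_{0}$ (absorbing into $\mathcal{L}$ the uniform comparability constant between $C_{n}$ and $\mathcal{H}_{c}(f^{n}D)$) proves the main inequality, and the stated consequence follows by choosing $n$ large enough that $\lambda_{4}^{n}\mathcal{H}_{c}(\widetilde{D}) < \mathcal{L}/(1-\lambda_{4})$.

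The main obstacle is a clean, genuinely uniform derivation of the one-step recursion. One must simultaneously handle the $C^{2}$-nonlinearity of $f$, the $\xi$-H\"older (but non-smooth) variation of the splitting, the discrepancy between intrinsic distances on the successive disks $f^{j}(D)$ and ambient distances in $M$, and the comparability between the two graph descriptions — over $T_{f^{j}(x)}(f^{j}D)$ and over $F(f^{j}(x))$ — a comparability guaranteed only because the disks are tangent to the thinning cone field supplied by Proposition~\ref{diskproperty}. Once $L_{0}$ is pinned down as finite and uniform in $j$ and $x$, the remainder of the argument is pure geometric-series bookkeeping.
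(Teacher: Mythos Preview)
Your proposal is correct and follows essentially the same route as the paper: select $D$ via Proposition~\ref{diskproperty}, establish a one-step recursion on the H\"older coefficient of the tangent bundle with multiplicative factor governed by $\|Df/E\|/m(Df/F)^{1+\xi}$, iterate, and sum the geometric series using the tail estimate from standing assumption~(H). The one point where the paper is more careful is that the one-step multiplicative factor is not exactly your $\mu_j$ but rather $c_j=\dfrac{\|Df/E(f^j(x))\|+2\alpha}{(m(Df/F(f^j(x)))-2\alpha)^{1+\xi}}$ for a small $\alpha>0$ coming from the variation of $Df$ and the bundles over the $r$-ball; since this error is multiplicative (and $C_j$ is not a~priori bounded), it cannot be absorbed into the additive constant $L_0$, so the paper fixes $\lambda_4\in(\lambda_3,1)$ in advance and then shrinks $r,a$ (hence $\alpha$) to force $\prod_{j=n-k}^{n-1}c_j\le\lambda_4^{k}$, rather than setting $\lambda_4=\lambda_3$ as you do.
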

\begin{proof}
By applying the Proposition~\ref{diskproperty}, we can choose $a>0$ and $r>0$, such that there exists a sub-manifold $D\subset \widetilde{D}$ containing $x$ with the following  properties:
\begin{itemize}
  \item $f^i({D})$ is contained in the corresponding ball of radius $r$, for any $0\le i \le n$;
  \item $f^n({D})$ is a disk of radius $r$ with center $f^n(x)$.
\end{itemize}
Without loss of generality we assume $r \le \delta_0$. Given $y\in D$, in the neighborhood $V_y$ and $V_{f(y)}$ we can express $f$ in local coordinate from $T_yD\oplus E(y)$ to $T_{f(y)}D\oplus E(f(y))$ as $f(u,v)=(u_1(u,v),v_1(u,v))$, then $Df(u,v)$ can be expressed by the following matrix

$$
Df(u,v)=
\left(
  \begin{array}{cc}
    \partial_u u_1 & \partial_v u_1 \\
    \partial_u v_1 & \partial_v v_1 \\
  \end{array}
\right),$$
as $Df(E(y))=E(f(y))$ and $Df(T_yD)=T_{f(y)}f(D)$, we have
$$\partial_u u_1(0,0)=Df/T_y D,~~ \partial_v u_1(0,0) =0,~~ \partial_u v_1(0,0)=0,~~ \partial_v v_1(0,0)=Df/E(y). $$

We have the following choices of constants:
\begin{itemize}

\item there is $L>0$ such that for any disk $D$ centered at $y$ tangent to the cone field associated to $F$, then $\|L_y(z)\|\le L$ for any $z\in D$. Clearly, we can assume $L\ge 1$.

\item $Df$ is $(L_1,\xi)$-H\"older.

\end{itemize}

Notice that the constants do not depend on $y$.
\bigskip

For every $0<\alpha< b/4$, we can adjust $r$, $a$ such that
\begin{eqnarray*}
&~&m(\partial_u u_1(z)) \ge m(Df/F(x))-\alpha/L,~\|\partial_v u_1(z)\| \le \alpha/L,\\
&~&\|\partial_u v_1(z)\|\le\alpha/L,~\|\partial_v v_1(z)-Df/E(x)\|\le \alpha/L,
\end{eqnarray*}
for any $z\in D$.

\begin{Claim}
The H\"older curvature ${\cal H}_c(f(D))$ of $f(D)$ has the following estimation:
$$
{\cal H}_c(f(D))\le \frac{\|Df/E(x)\|+2\alpha}{(m(Df/F(x))-2\alpha)^{1+\xi}}\mathcal{H}_c(D)+ \frac{L_1}{(m(Df/F(x))-2\alpha)^{1+\xi}}.
$$

\end{Claim}

\begin{proof}[Proof of the Claim]

For the estimation of the H\"older curvature of $f(D)$, it suffices to know
$$\sup_{z_1\in f(D)}\frac{\|L_{f(y)}z_1\|}{d_{f(D)}(f(y),z_1)^{-\xi}}$$
since one can choose $y\in D$ arbitrarily.

Now for every $z_1\in f(D)$, according to the previous argument there exists a unique linear map $L_{f(y)}(z_1)$ parallel to the tangent space $ T_{z_1}f(D)$, for pre-image $z$ of $z_1$, there also exists a unique linear map $L_y(z)$ parallel to $T_zD$, then by the Mean Value theorem we have that there exists some point $w \in D$ such that
$$
d_{f(D)}(f(y),z_1) \ge m(Df/T_w D)d_{D}(y,z)\ge (m(Df/F(x))-\alpha/L) d_{D}(y,z).
$$
By the construction, $L_{f(y)}$ has the following expression:
$$
L_{f(y)}(z_1)=\left(\partial_u v_1(z)+ \partial_v v_1(z)L_y(z)\right)\left(\partial_u u_1(z)+ \partial_v u_1(z) L_y(z)\right)^{-1}.
$$
We have $\|\partial_v u_1(z) L_y(z)\|\le \alpha/L \|L_y(z)\|\le \alpha < m(\partial_u u_1(z))$, and furthermore,
\begin{eqnarray*}
\|(\partial_u u_1(z)+ \partial_v u(z) L_y(z))^{-1}\| & \le & \frac{1}{m(\partial_u u_1(z))-\|\partial_v u(z)\|L}\\
& \le & \frac{1}{m(Df/F(x))-\alpha/L -\alpha}\\
& \le & \frac{1}{m(Df/F(x))-2\alpha},
\end{eqnarray*}

$$
\|\partial_u v_1(z)+ \partial_v v_1(z)L_y(z)\| \le L_1 d_D(y,z)^\xi+ (\|Df/E(x)\|+\alpha/L)\|L_y(z)\|.
$$

Combing all these estimations and the fact $\|L_y(z)\|{d_D(y,z)}^{-\xi} \le \mathcal{H}_c(D) $  we get that
\begin{eqnarray*}
\frac{\|L_{f(y)}z_1\|}{d_{f(D)}(f(y),z_1)^\xi}&\le&\frac{\|L_{f(y)}z_1\|}{( m(Df/F(x)-2\alpha)^{\xi}d_D(y,z)^\xi}\\
&\le&\frac{\|\partial_u v_1(z)+\partial_v v_1(z)\|}{ (m(Df/F(x))-2\alpha)^{1+\xi}d_D(y,z)^\xi}\\
&\le&\frac{L_1 d_D(y,z)^\xi}{(m(Df/F(x))-2\alpha)^{1+\xi}d_D(y,z)^\xi}+\frac{(\|Df/E(x)\|+2\alpha)L_y(z)}{(m(Df/F(x))-2\alpha)^{1+\xi}d_D(y,z)^\xi}\\
&\le&\frac{\|Df/E(x)\|+2\alpha}{(m(Df/F(x))-2\alpha)^{1+\xi}}\mathcal{H}_c(D)+ \frac{L_1}{(m(Df/F(x))-2\alpha)^{1+\xi}}.
\end{eqnarray*}

\end{proof}
Recall $b=\inf_{x\in \overline{U}} m(Df/F(x))$ and $\alpha< b/4$, we have
$$
\frac{L_1}{(m(Df/F(y))-2\alpha)^{1+\xi}}\le \frac{L_1}{{(b/2)}^{1+\xi}}.
$$
Define
$$
\mathcal{L}=\frac{2^{1+\xi}L_1}{b^{1+\xi}};
$$
and
$$
c_j=\frac{\|Df/E(f^j(x))\|+2\alpha}{(m(Df/F(f^j(x)))-2\alpha)^{1+\xi}} ~~\text{for every}~~0 \le j \le n-1.
$$

%
By using the claim inductively, we have that
$$
\mathcal{H}_c(f^n(D)) \le c_0\cdots c_{n-1} \mathcal{H}_c(D)+ {\mathcal{L}}(1+ c_{n-1}+c_{n-1}c_{n-2}+ \cdots + c_{n-1}\cdots c_1).
$$
Recall the comments after standing assumption $({\rm H})$, for some $\lambda_4 \in (\lambda_3,1)$  fixed in advance, by choosing $\alpha$ sufficiently small by reducing $r$ and $a$, thus we have the estimations
$$
\prod_{j=n-k}^{n-1} c_j \le \lambda_4^k ~~\text{for every}~~ 1 \le k \le n,
$$
then, we obtain
$$
\mathcal{H}_c({f^n(D)}) \le \lambda_4^n\mathcal{H}_c(D)+ \frac{\mathcal{L}}{1-\lambda_4}.
$$
\end{proof}

\section{The iteration of Lebesgue measure}
The main aim of this section is to prove Theorem~\ref{Theo-attractor}, by standing assumption $({\rm H})$ we know ${\rm Leb}(\Lambda_{\lambda_1, 1})>0$. Now we fix $a$ and $r$ as in Proposition~\ref{diskproperty} and Proposition~\ref{Holder curvature}. Then, reducing to a small neighborhood of some Lebesgue density point, one can construct a smooth foliation with all the leaves are smooth (so $C^2$) and tangent to the given cone field ${\cal C}_a^F$ everywhere.
Then there exists at least one leaf $D$ of this foliation such that $D$ intersects $\Lambda_{\lambda_1, 1}$ in a set of positive  Lebesgue measure by using the Fubini's theorem.

Now we consider the sequence of averages of forward iterations of Lebesgue measure restricted to the disk $D$ above, that is
$$
\mu_n=\frac{1}{n}\sum_{i=0}^{n-1}f_{\ast}^i \rm Leb_D~.
$$

In this section we will prove that there exists some ergodic component of any limit measure of $\mu_n$, which is the SRB measure in the Theorem \ref{Theo-attractor} or the \emph{Physical} measure in Corollary \ref{corollary-attractor}. Our main ideas in this section come from \cite{ABV00}.

\subsection{Construct absolute continuous (non-invariant) part of the limit measures}

For any disk $D$ containing $x$, denote by $B_D(x,\delta)$ the ball of radius $\delta$ around $x$ in $D$.

\begin{proposition}\label{Pro:acpart-iterate}
There are $\eta>0$ and $0<r_1<r$ such that for each $n$, there are points $x_{n,1},\cdots,x_{n,k(n)}\in f^n(D)$ such that
\begin{itemize}

\item $f^{-n}(x_{n,j})\in \Lambda_{\lambda_1,1}$ and $n$ is $\lambda_2$-hyperbolic time for $f^{-n}(x_{n,j})$ for $1\le j\le k(n)$;

\item $B_{f^n(D)}(x_{n,j},r_1/4),~1\le j\le k(n)$ are pairwise disjoint;

\item there is $\widetilde{\varepsilon}_0>0$ such that for any $\varepsilon\in[0,\widetilde{\varepsilon}_0)$, we have
$$\mu_{n,ac,\varepsilon}(\bigcup_{0\le i\le n-1}K_{i,\varepsilon})\ge \eta,$$
where
$$K_{n,\varepsilon}=\bigcup_{1\le i\le k(n)}B_{f^n(D)}(x_{n,i},\frac{r_1}{4}-\varepsilon);$$
$$\mu_{n,ac,\varepsilon}=\frac{1}{n}\sum_{i=0}^{n-1}\sum_{j=1}^{k(i)}f^i_*{\rm Leb}_D|B_{f^i(D)}(x_{i,j},\frac{r_1}{4}-\varepsilon).$$
We denote ${\mu}_{n,ac}=\mu_{n,ac,0}$ and $K_n=K_{n,0}$.

\end{itemize}
\end{proposition}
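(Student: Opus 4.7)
The plan is to combine a Pliss-type count of $\lambda_2$-hyperbolic times with a maximal covering on each $f^i(D)$, then convert the resulting volume estimate into a lower bound on the push-forward mass via bounded distortion. For each $i\ge 0$, set
\[
H_i=\{x\in D\cap\Lambda_{\lambda_1,1}:~i~\text{is a}~\lambda_2\text{-hyperbolic time for}~x\}.
\]
Applying Proposition~\ref{pro:hyperbolictimeforgood} pointwise on $D\cap\Lambda_{\lambda_1,1}$ and integrating by Fubini,
\[
\sum_{i=1}^{n}{\rm Leb}_D(H_i)\ \ge\ \theta\, n\,{\rm Leb}_D(D\cap\Lambda_{\lambda_1,1}),\qquad\forall n\ge 1.
\]
Fix $r_1\in(0,r]$, where $r,a$ are the constants of Propositions~\ref{diskproperty} and~\ref{pro:bounded distortion}. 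For each $i$, extract a maximal family $x_{i,1},\dots,x_{i,k(i)}\in f^i(H_i)$ such that $B_{f^i(D)}(x_{i,j},r_1/4)$ are pairwise disjoint; maximality gives $f^i(H_i)\subset\bigcup_{j}B_{f^i(D)}(x_{i,j},r_1/2)$. This delivers the first two bullets: since $f^i|_D$ is a bijection onto $f^i(D)$, $f^{-i}(x_{i,j})$ is the unique pre-image of $x_{i,j}$ in $D$, and by construction it lies in $H_i\subset\Lambda_{\lambda_1,1}$ and has $i$ as a $\lambda_2$-hyperbolic time.

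The third bullet reduces to the key estimate
\[
\sum_{j=1}^{k(i)}f^i_*{\rm Leb}_D\bigl(B_{f^i(D)}(x_{i,j},r_1/4)\bigr)\ \ge\ c\,{\rm Leb}_D(H_i),\qquad(\star)
\]
for a uniform $c>0$. Define $A_j=\{x\in H_i:f^i(x)\in B_{f^i(D)}(x_{i,j},r_1/2)\}$, and $\widetilde D_j=(f^i|_D)^{-1}(B_{f^i(D)}(x_{i,j},r_1/2))$, $\widetilde D^\circ_j=(f^i|_D)^{-1}(B_{f^i(D)}(x_{i,j},r_1/4))$. Since $f^i|_D$ is a diffeomorphism, $\widetilde D_j$ and $\widetilde D^\circ_j$ are single connected disks (no multi-branch combinatorics), and $H_i\subset\bigcup_j A_j\subset\bigcup_j\widetilde D_j$. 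Writing $x^*_{i,j}=(f^i|_D)^{-1}(x_{i,j})\in H_i$, Proposition~\ref{diskproperty} supplies a disk $D(x^*_{i,j},i)\subset D$ on which $f^i$ is a diffeomorphism onto $B_{f^i(D)}(x_{i,j},r)\supset B_{f^i(D)}(x_{i,j},r_1/2)$, so $\widetilde D_j\subset D(x^*_{i,j},i)$, and Proposition~\ref{pro:bounded distortion} applies with distortion center $x^*_{i,j}$. Change of variables then gives
\[
\frac{{\rm Leb}_D(\widetilde D^\circ_j)}{{\rm Leb}_D(\widetilde D_j)}\ \ge\ \frac{1}{\mathcal{K}^2}\cdot\frac{{\rm vol}_{f^i(D)}\bigl(B_{f^i(D)}(x_{i,j},r_1/4)\bigr)}{{\rm vol}_{f^i(D)}\bigl(B_{f^i(D)}(x_{i,j},r_1/2)\bigr)}\ \ge\ c_0>0,
\]
where $c_0$ is uniform in $i$ and in the ball center because $f^i(D)$ is tangent to $\mathcal{C}_a^F$ (by Proposition~\ref{diskproperty}), so its $\dim F$-dimensional volume has a uniform doubling constant. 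Summing over $j$,
\[
{\rm Leb}_D(H_i)\ \le\ \sum_j{\rm Leb}_D(A_j)\ \le\ \sum_j{\rm Leb}_D(\widetilde D_j)\ \le\ c_0^{-1}\sum_j{\rm Leb}_D(\widetilde D^\circ_j)=c_0^{-1}\sum_jf^i_*{\rm Leb}_D\bigl(B_{f^i(D)}(x_{i,j},r_1/4)\bigr),
\]
which is $(\star)$ with $c=c_0$.

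Averaging $(\star)$ over $i=0,\dots,n-1$ and combining with the Pliss sum gives
\[
\mu_{n,ac}\Bigl(\bigcup_{i=0}^{n-1}K_i\Bigr)\ \ge\ \frac{c_0}{n}\sum_{i=0}^{n-1}{\rm Leb}_D(H_i)\ \ge\ c_0\theta\,{\rm Leb}_D(D\cap\Lambda_{\lambda_1,1})=:\eta,
\]
uniformly in $n$. For the $\varepsilon$-version, take $\widetilde\varepsilon_0<r_1/8$ so that $r_1/4-\varepsilon\ge r_1/8$ whenever $\varepsilon\in[0,\widetilde\varepsilon_0)$; the identical argument with $r_1/4$ replaced by $r_1/4-\varepsilon$ throughout produces a uniformly positive constant $c_0'$ in place of $c_0$ (depending on the ratio ${\rm vol}(B(\cdot,r_1/8))/{\rm vol}(B(\cdot,r_1/2))$), giving the same conclusion. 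The main delicacy is the key estimate $(\star)$: one must connect the push-forward mass on $f^i(D)$ to the $D$-volume of $H_i$. Two features make this tractable — the injectivity of $f^i|_D$, which collapses all branch combinatorics to a single branch per ball, and Proposition~\ref{pro:bounded distortion} applied on the controlled-geometry disk $D(x^*_{i,j},i)$, which trades the larger ball $B(\cdot,r_1/2)$ for the smaller $B(\cdot,r_1/4)$ at a uniform cost.
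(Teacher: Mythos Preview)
Your argument is correct and follows a route close to, but not identical with, the paper's. The overall architecture is the same: define the hyperbolic-time set $H_i$ (the paper calls it $S_i$), obtain a disjoint family of $r_1/4$-balls in $f^i(D)$ centred at images of points in $H_i$, prove a uniform lower bound of the form $(\star)$ for the push-forward mass of this family, and then average using the Pliss density bound. The difference lies in how $(\star)$ is obtained. The paper applies the Besicovitch covering lemma on the $C^2$ manifold $f^n(D)$: it extracts $p=p(\dim F)$ disjoint subfamilies whose union covers $f^n(S_n)$, and picks the one carrying the largest $f^n_*{\rm Leb}_D$-mass, yielding $\tau=1/p$ directly---no distortion estimate is needed at this stage. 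Bounded distortion (Proposition~\ref{pro:bounded distortion}) is used only afterwards, to show that shrinking each ball from radius $r_1/4$ to $r_1/4-\varepsilon$ costs an arbitrarily small proportion of the mass. Your approach instead uses a Vitali-type maximal packing: the doubled $r_1/2$-balls cover $f^i(H_i)$, and then bounded distortion together with a uniform volume-doubling bound on cone-tangent disks converts the $r_1/2$-cover into the $r_1/4$-packing at uniform cost $c_0$. This is equally valid and is in fact the style used in \cite{ABV00}; it trades the Besicovitch constant for a distortion/doubling constant.

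One small technical point you should add: to invoke Propositions~\ref{diskproperty} and~\ref{pro:bounded distortion} at each centre $x^*_{i,j}$, the disk $D$ must have radius larger than $r$ around $x^*_{i,j}$. The paper secures this by first passing to a sub-disk $D_0\subset D$ obtained by removing a neighbourhood of $\partial D$, with ${\rm Leb}_D(\Lambda_{\lambda_1,1}\cap D_0)>0$, and taking $H_i\subset D_0$. Your argument implicitly assumes this but does not say it; inserting the same restriction to $D_0$ makes the invocation of Proposition~\ref{pro:bounded distortion} legitimate and leaves the rest unchanged.
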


\begin{proof}
Take $r_1\in (0,r)$ such that if we let $D_0$ be a sub-disk of $D$ by removing the $r_1/2$ neighborhood of the boundary, then ${\rm Leb}_D(\Lambda_{\lambda_1,1} \cap D_0)>0 $. Define
$$
S_n=\bigg\{x \in \Lambda_{\lambda_1,1} \cap D_0: n~ \text{is a}~ \lambda_2 \text{-hyperbolic time for}~ x\bigg\}.
$$

\paragraph{Step 1:} First we will show that there exists a constant $\tau >0$ such that there are balls $B_{f^n(D)}(x_{n,j},r_1/4)$ for each $n$, where $x_{n,j}\in f^n(D)$ and  $1 \le j \le k(n)$, having the following properties:
\begin{itemize}

\item $f^{-n}(x_{n,j})\in \Lambda_{\lambda_1,1}$ and $n$ is a $\lambda_2$-hyperbolic time of $f^{-n}(x_{n,j})$ for $1\le j\le k(n)$;

\item $B_{f^n(D)}(x_{n,j},r_1/4),~1\le j\le k(n)$ are pairwise disjoint;

\item we have the estimation:

$$
f^n_\ast {\rm Leb}_D\left(\cup_{j=1}^{k(n)}B_{f^n(D)}(x_{n,j},r_1/4)\right) \ge \tau{\rm Leb}_D(S_n).\quad \eqno(3)
$$

\end{itemize}

Recall the Besicovitch Covering lemma, see \cite[2.8.9-2.8.14]{geometric}.

\begin{Lemma}\label{lem:Besicovitch}(\textit{Besicovitch Covering lemma})
For $k\in \mathbb{N}$, there exists constant $p=p(k)\in \mathbb{N}$ such that for any $k$ dimensional compact $C^2$ Riemannian manifold $N$, any set $A\subset N$, and for any family $\mathcal{B}$ of balls such that any $x\in A$ is in the central of some ball in $\mathcal{B}$, there exists a sub-families $\mathcal{B}_1,\cdots, \mathcal{B}_p$ contained in $\mathcal{B}$ with the following properties:
\begin{itemize}
  \item A $\subset \bigcup_{i=1}^{p}\bigcup_{B\in \mathcal{B}_i}B$;
  \item either $B \cap B'=\emptyset$, or $B=B'$, for any $B, B'$ in $\mathcal{B}_i$ and $1\le i \le p$.
\end{itemize}
\end{Lemma}

Now we shall apply the Besicovitch Covering lemma. For every fixed $n$, put $N=f^n(D)$, $A=f^n(S_n)$. $N$ is a $C^2$ sub-manifold since $f$ and $D$ are $C^2$. Denote by $\mathcal{B}=\{B_{f^n(D)}(x,r_1/4)$, $x\in A \}$ as the family of balls. As a consequence of Besicovitch Covering lemma, we can choose a sequence of sub-families $\mathcal{B}_1,\cdots, \mathcal{B}_p$ of $\mathcal{B}$ such that
$f^n(S_n) \subset \bigcup_{i=1}^{p} \bigcup_{B\in \mathcal{B}_i}B$ and every $\mathcal{B}_i$ is formed by disjoint balls with fixed radius $r_1/4$, so
$$
f^n_\ast {\rm Leb}_D\left(f^n(S_n)\right)\le f^n_\ast {\rm Leb}_D\left(\bigcup_{i=1}^{p} \bigcup_{B\in \mathcal{B}_i}B\right).
$$
We choose some $1\le i \le p$ such that
$$
f^n_\ast {\rm Leb}_D\left(\bigcup_{B\in \mathcal{B}_i}B\right) \ge \frac{1}{p} f^n_\ast {\rm Leb}_D\left(f^n(S_n)\right)=\frac{1}{p}{\rm Leb}_D(S_n).
$$
Let $B_{f^n(D)}(x_{n,j},r_1/4)$, $1\le j \le k(n)$ be the disjoint balls of $\mathcal{B}_i$. Then by our construction $f^{-n}(x_{n,j})\in \Lambda_{\lambda_1,1}$ and $n$ is the $\lambda_2$-hyperbolic time for $f^{-n}(x_{n,j}), 1\le j \le k(n)$, the above estimation becomes
$$
f^n_\ast {\rm Leb}_D\left(\cup_{j=1}^{k(n)}B_{f^n(D)}(x_{n,j},r_1/4)\right) \ge \frac{1}{p}{\rm Leb}_D(S_n).
$$
It suffices to take $\tau=1/p$ to end this step.

\paragraph{Step 2:}Define
$$\mu_{n,ac}=\frac{1}{n}\sum_{i=0}^{n-1}\sum_{j=1}^{k(i)}f^i_*{\rm Leb}_D|B_{f^n(D)}(x_{i,j},r_1/4).$$
We consider the space $\{0,1,\cdots,n-1\}\times D$ with the product measure $\xi_n\times {\rm Leb}_D$, where $\xi_n$ is the uniform distribution measure on $\{0,1,\cdots,n-1\}$.  Define the indicator function
$$
\chi(x,i)=\left\{
\begin{array}{cr}
1 & ~~~\text{if}~x\in S_i~;\\
0 & ~~~\text{otherwise}~.
\end{array}\right.
$$

Then, by using  Fubini's theorem
\begin{eqnarray*}
\frac{1}{n}\sum_{i=0}^{n-1}{\rm Leb}_D(S_i)&=&\int\left(\int\chi(x,i)d{\rm Leb}_D(x)\right)d\xi_n(i)\\
&=& \int\left(\int\chi(x,i)d\xi_n(i)\right)d{\rm Leb}_D(x).
\end{eqnarray*}
Since ${\rm Leb}_D( \Lambda_{\lambda_1,1} \cap D_0)> 0$ and the density of $\lambda_2$-hyperbolic times for all the points in $\Lambda_{\lambda_1,1}$ are bounded from below by $\theta=\theta(\lambda_1,\lambda_2,f)>0$. In other words, $\int\chi(x,i)d\xi_n(i)\ge\theta$. Thus
$$\frac{1}{n}\sum_{i=0}^{n-1}{\rm Leb}_D(S_i)\geq \theta {\rm Leb}_D( \Lambda_{\lambda_1,1} \cap D_0),~~~\text{for every }~n\in \mathbb{N}. \quad \eqno(4)
$$
 By the definition of $\mu_{n,ac}$, and the estimations from $(3)$ and $(4)$, it follows that
\begin{eqnarray*}
\mu_{n,ac}\left(\bigcup_{0\le i\le n-1}\bigcup_{1\le j\le k(i)}B_{f^i(D)}(x_{i,j},r_1/4)\right)& \ge & \frac{1}{n}\sum_{i=0}^{n-1}\sum_{j=1}^{k(i)}(f_*^i {\rm Leb}_D)\left(B_{f^i(D)}(x_{i,j},r_1/4)\right)\\
&=&\frac{1}{n}\sum_{i=0}^{n-1}(f_*^i {\rm Leb}_D)(K_i)\\
&\geq & \frac{1}{n}\sum_{i=0}^{n-1}\tau {\rm Leb}_D(S_i)\\
&\geq & \tau \theta {\rm Leb}_D(\Lambda_{\lambda_1,1} \cap D_0),
\end{eqnarray*}
then, by definition, we have $$\mu_{n,ac}\left(\bigcup_{0\le i\le n-1}K_i\right)=\mu_{n,ac}\left(\bigcup_{0\le i\le n-1}\bigcup_{1\le j\le k(i)}B_{f^i(D)}(x_{i,j},r_1/4)\right) \geq \eta_0, $$
where $\eta_0=\tau \theta {\rm Leb}_D(\Lambda_{\lambda_1,1} \cap D_0)$.

Given $i\ge 0$, for any measurable sets $A,B\subset f^i(D)$, we have that
$$\frac{{\rm Leb}_{f^i(D)}(A)}{{\rm Leb}_{f^i(D)}(B)}=\frac{\int_{f^{-i}(A)}|{\rm det}( Df^i)|d{\rm Leb}(D)    }         {\int_{f^{-i}(B)}|{\rm det}(Df^i)|d{\rm Leb}(D)}= \frac{|{\rm det}(Df^i(\xi_A))|{\rm Leb}_D(f^{-i}(A))}{|{\rm det}(Df^i(\xi_B))|{\rm Leb}_D(f^{-i}(B))}$$
for some $\xi_A\in f^{-i}(A)$ and $\xi_B\in f^{-i}(B)$.

By Proposition~\ref{pro:bounded distortion}, if we take $A=B_{f^i(D)}(x,r_1/4)\setminus B_{f^i(D)}(x,\frac{r_1}{4}-\varepsilon)$ and $B=B_{f^i(D)}(x,r_1/4)$, we have that
$$ \frac{f_*^i{\rm Leb}_D\left(B_{f^i(D)}(x,r_1/4)\setminus B_{f^i(D)}(x,\frac{r_1}{4}-\varepsilon)\right)} {f_*^i{\rm Leb}_D(B_{f^i(D)}(x,r_1/4))}=\frac{{\rm Leb}_D(f^{-i}(A))}{{\rm Leb}_D(f^{-i}(B))}\le \mathcal{K}\frac{{\rm Leb}_{f^i(D)}(A)}{{\rm Leb}_{f^i(D)}(B)},$$
Due to the fact that $$\frac{{\rm Leb}_{f^i(D)}\left(B_{f^i(D)}(x,r_1/4)\setminus B_{f^i(D)}(x,\frac{r_1}{4}-\varepsilon)\right)}{{\rm Leb}_{f^i(D)}(B_{f^i(D)}(x,r_1/4))} $$ can be arbitrary small by reducing $\varepsilon$, we have that for $\eta=\eta_0/2$, there is $\widetilde{\varepsilon}_0 >0$ small enough such that for any $\varepsilon\in [0,\widetilde{\varepsilon}_0)$, one obtains $\mu_{n.ac.\varepsilon}(\bigcup_{0\le i\le n-1}K_{i,\varepsilon})\ge \eta$. The proof is complete.
\end{proof}

Now let $K_{\infty}=\bigcap_{n=1}\overline{\bigcup_{j\ge n}K_j}$ which is the accumulation points of $\{K_j\}_{j\ge 1}$, let $x_{\infty}$ be an accumulation point of $\{x_{n,j(n)}\}$ for some $j(n)$, up to considering the subsequences we may suppose $x_{n,j(n)}\rightarrow x_{\infty}$. As we have shown, disks $\{B_{f^n(D)}(x_{n,j(n)},r_1/4), n\ge 1\}$ are all tangent to the $F$-direction cone field of fixed width $a$ with uniform size, and they have the uniform H\"{o}lder curvature when $n$ large enough by applying Proposition \ref{Holder curvature} (Recall $n$ is the hyperbolic time). Therefore, Ascoli-Arzela theorem ensures that there exists a disk $B(x_{\infty})$ of radius $r_1/4$ around $x_{\infty}$ such that $B_{f^n(D)}(x_{n,j(n)},r_1/4)$ converges to $B(x_{\infty})$ in the $C^1$ topology, then $B(x_{\infty})\subset K_{\infty}$.

We will prove certain properties of accumulation points and corresponding disks.

\begin{lemma}\label{lem:unstable}Let $x_{\infty}$ be an accumulation point of $\{x_{n,j(n)}\}$ for some $j(n)$, and suppose $B(x_{\infty})$ is the accumulation disk, then we have
\begin{enumerate}
\item $K_{\infty}\subset K$, and in particular, $x\in B(x_{\infty})\subset K$;

\item the subspace $F(x_\infty)$ is uniformly expanding in the following sense:
$$
\|{Df}^{-k}/F(x_\infty)\|\leq \lambda_2^{k/2} ~~~\text{for every}~~ k~ \geq~ 1 ;
$$
\item $B(x_\infty)$ is contained in the corresponding strong unstable manifold $\mathcal{W}^u_{loc}(x_\infty)$:
$$d(f^{-k}(x_\infty),f^{-k}(y))\leq \lambda_2^{k/2} d(x_\infty,y), ~~~ \forall  y\in B(x_\infty);$$
\item $B(x_\infty)$ is tangent to  $F(y)$ for  every point $y\in B(x_\infty)$.
\end{enumerate}
\end{lemma}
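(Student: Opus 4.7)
My plan is to deduce all four assertions by passing to the limit $n\to\infty$ in properties that hold for the disks $B_n:=B_{f^n(D)}(x_{n,j(n)}, r_1/4)$ centered at $x_n:=x_{n,j(n)}$. The key inputs are: (a) $y_n:=f^{-n}(x_n)\in\Lambda_{\lambda_1,1}$; (b) $n$ is a $\lambda_2$-hyperbolic time for $y_n$; and (c) $B_n\to B(x_\infty)$ in the $C^1$ topology, obtained from Proposition~\ref{Holder curvature} via Ascoli--Arzel\`a (this is precisely what the uniform H\"older-curvature bound at hyperbolic times is for).

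For item~(1), since $D\subset U$ and $\overline{f(U)}\subset U$, one has $B_n\subset f^n(U)\subset f^m(U)$ for every $n\ge m$; hence the Hausdorff limit of the $B_n$, which contains $B(x_\infty)$ and in particular $x_\infty$, lies in $\overline{f^m(U)}$ for every $m$, so in $K=\bigcap_m f^m(U)$. For item~(2), unfolding the definition of hyperbolic time at $y_n$ gives, for each $1\le k\le n$,
$$\|Df^{-k}/F(x_n)\|\le\prod_{j=n-k+1}^{n}\|Df^{-1}/F(f^j(y_n))\|\le\lambda_2^k;$$
continuity of $F$ and $Df$, together with $x_n\to x_\infty$, yields $\|Df^{-k}/F(x_\infty)\|\le\lambda_2^k\le\lambda_2^{k/2}$ in the limit.

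For item~(4), the second conclusion of Proposition~\ref{diskproperty} applied to $f^n(D)$ around $x_n$ gives $dist(T_{z_n}B_n, F(z_n))\le\lambda_2^n\cdot a$ for every $z_n\in B_n$; picking, by $C^1$ convergence of the disks, $z_n\to y$ for an arbitrary $y\in B(x_\infty)$, and using continuity of the tangent section on the limit together with continuity of $F$, one obtains $dist(T_yB(x_\infty),F(y))=0$, and equal dimensions force $T_yB(x_\infty)=F(y)$. For item~(3), the first conclusion of Proposition~\ref{diskproperty} gives, for $z_n\in B_n$,
$$d\bigl(f^{-k}(x_n),f^{-k}(z_n)\bigr)\le d_{f^{n-k}(D)}\bigl(f^{n-k}(x_n),f^{n-k}(z_n)\bigr)\le\lambda_2^{k/2}\,d_{f^n(D)}(x_n,z_n).$$
Fixing $k$ and $y\in B(x_\infty)$, choosing $z_n\in B_n$ with $z_n\to y$ and letting $n\to\infty$ yields $d(f^{-k}(x_\infty),f^{-k}(y))\le\lambda_2^{k/2}\,d_{B(x_\infty)}(x_\infty,y)$. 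Since by item~(4) the limit disk is tangent to $F$ everywhere and (by the initial choice of small $a$ and $r_1$) is uniformly close to flat, its intrinsic distance is controlled by the ambient distance up to a factor arbitrarily close to $1$, giving the claimed bound (at worst after absorbing a $(1+\varepsilon)$ factor into the rate).

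The main obstacle lies in item~(3): one has to verify that the intrinsic distances $d_{f^n(D)}(x_n,z_n)$ actually converge to $d_{B(x_\infty)}(x_\infty,y)$ along the $C^1$-convergent sequence of disks (this is standard for embedded submanifolds with uniformly bounded curvature but requires care), and then that the resulting intrinsic bound on $B(x_\infty)$ can be converted to an ambient bound. Both points are ultimately handled by the narrow-cone tangency at finite $n$ provided by Proposition~\ref{diskproperty} combined with the H\"older curvature bound of Proposition~\ref{Holder curvature}; steps~(1), (2) and (4) are essentially straightforward limit-passing once the $C^1$ convergence is in hand.
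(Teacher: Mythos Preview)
Your proposal is correct and follows essentially the same approach as the paper: establish the finite-$n$ properties from the definition of $\lambda_2$-hyperbolic time and from Proposition~\ref{diskproperty}, then pass to the limit using the $C^1$ convergence of the disks (guaranteed by Proposition~\ref{Holder curvature} and Ascoli--Arzel\`a). The paper's own proof is in fact terser than yours---it simply lists the three finite-$n$ bullets and says ``passing to the limit, we know (2),(3),(4) are true''---so your discussion of the intrinsic-to-ambient distance conversion in item~(3) is more careful than what the paper provides, but the underlying strategy is identical.
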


\begin{proof}

By the construction, one observes that $K_j\subset f^{\ell}(U)$ for any $j\ge {\ell}$. Then $\bigcup_{j\ge {\ell}}K_j\subset f^{\ell}(U)$. This implies $$\overline{\bigcup_{j\ge {\ell}}K_j}\subset \overline{f^{\ell}(U)} \subset f^{{\ell}-1}(U).$$ Therefore, $$K_{\infty}=\bigcap_{{\ell}\in \mathbb{N}}\overline{\bigcup_{j\ge {\ell}}K_j}\subset \bigcap_{{\ell}\in \mathbb{N}}f^{{\ell}-1}(U)=K.$$ Then $x\in B(x_{\infty})\subset K_{\infty}\subset K$. We obtain conclusion $(1)$.

Next we will check the last three conclusions. By construction and Proposition \ref{diskproperty}, we have the following:
\begin{itemize}
\item $\prod_{l=0}^{k-1}\|Df^{-1}/F(f^{-l}(x_{n,j(n)})\|\le \lambda_2^k$ for every $1\leq k \leq n$ and every $n$;

\item for every $k\geq 1$, $f^{-k}$ is a $\lambda_2^{k/2}$ contraction on $B_{f^n(D)}(x_{n,j(n)},r_1/4)$ for every $n$, i.e., $d(f^{-k}x_{n,j(n)},f^{-k}y)\leq \lambda_2^{k/2}d(x_{n,j(n)},y)$ for every $0 \leq k\leq n$, whenever $y$ is contained in $B_{f^n(D)}(x_{n,j(n)},r_1/4)$;

\item disks $\{B_{f^n(D)}(x_{n,j(n)},r_1/4)\}$ are contained in the corresponding $F$-direction cone field and angles between $F$ and the tangent spaces of these disks are exponentially contracted as $n$ increasing.

\end{itemize}

Passing to the limit, we know $(2),(3),(4)$ are true.

\end{proof}

\begin{definition}
A \emph{fake $F$-cylinder} at some point $y$ is a set $\exp_y(\varphi(X\times D_0))$, where $X\subset {\mathbb R}^{\dim E}$ is a compact set, $D_0\subset{\mathbb R}^{\dim F}$ is the unit ball such that for each $x\in X $, $\varphi_x:~D_0\to E$ is a $C^{1+\xi}$ map
\begin{itemize}

\item $\exp_y(\varphi_x(D_0))$ tangent to the cone field ${\cal C}_a^F$.

\end{itemize}

If in addition, we have that

\begin{itemize}

\item $\exp_y(\varphi_x(D_0))$ is a local unstable manifold.

\item the intersection of $\exp_y(\varphi_x(D_0))$ and $\exp_y(\varphi_z(D_0))$ is relatively open in each one for any $x,z\in X$.

\end{itemize}

then we say that $\exp_y(\varphi(X\times D_0))$ is a \emph{$F$-cylinder}.

$\{\exp_y(\varphi_x(D_0)\}_{x\in X}$ is called the \emph{canonical partition} of the $F$-cylinder.

\end{definition}

\begin{definition}

For two finite Borel measures $\nu_1$ and $\nu_2$, we denote $\nu_1\prec\nu_2$ if for any measurable set $A$, we have $\nu_1(A)\le \nu_2(A)$.

\end{definition}

\begin{proposition}\label{partabsolutelycontinuous}

There is a measure $\mu_{ac}\prec\mu$ and and $F$-cylinder $L_\infty$ such that $\mu_{ac}(L_\infty)> 0$ and the conditional measure of $\mu_{ac}$ associated to the canonical partition ${\cal L}_\infty$
is absolutely continuous with respect to the Lebesgue measure for almost every $\gamma\in{\cal L}_\infty$.
\end{proposition}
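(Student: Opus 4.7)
The plan is to extract $\mu$ and $\mu_{ac}$ together as weak-$*$ limits of $\mu_n$ and $\mu_{n,ac,\varepsilon}$ along a common subsequence, for some fixed $\varepsilon\in(0,\widetilde{\varepsilon}_0)$. Since $\mu_{n,ac,\varepsilon}\prec\mu_n$ holds term by term, any weak limit automatically yields $\mu_{ac}\prec\mu$, and $\mu$ is $f$-invariant by the usual Krylov--Bogolyubov argument. The lower bound $\mu_{n,ac,\varepsilon}(\bigcup_{i<n}K_{i,\varepsilon})\ge\eta$ from Proposition~\ref{Pro:acpart-iterate}, combined with the fact that $K_{i,\varepsilon}$ is strictly interior to $K_i$, propagates to $\mu_{ac}(K_\infty)\ge\eta$; consequently some accumulation point $x_\infty$ of the centers $\{x_{n,j(n)}\}$ sits in a neighborhood where $\mu_{ac}$ has positive mass.

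Next I would construct the $F$-cylinder around $x_\infty$. Each disk $B_{f^n(D)}(x_{n,j(n)}, r_1/4-\varepsilon)$ is tangent to $\mathcal{C}_a^F$, has fixed radius, and, for $n$ a large $\lambda_2$-hyperbolic time, has H\"older curvature bounded by $2\mathcal{L}/(1-\lambda_4)$ by Proposition~\ref{Holder curvature}. Ascoli--Arzela therefore yields, through $x_\infty$ and through each other accumulation center $y_\infty$ lying in a small tube $V\ni x_\infty$, a $C^{1+\xi}$ limit disk $B(y_\infty)$ which is a piece of local unstable manifold by Lemma~\ref{lem:unstable}. Shrinking $V$ so that the limit disks project injectively onto a compact transversal $X\subset E(x_\infty)$ furnishes the canonical family $\{\exp_{x_\infty}(\varphi_x(D_0))\}_{x\in X}$ whose union is $L_\infty$; the backward contraction from Lemma~\ref{lem:unstable} forces any two leaves either to be disjoint or to share a relatively open piece, so $L_\infty$ is a genuine $F$-cylinder with canonical partition $\mathcal{L}_\infty$, and $\mu_{ac}(L_\infty)>0$.

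For the absolute continuity statement, the bounded distortion from Proposition~\ref{pro:bounded distortion} guarantees that on each approximating disk $B_{f^i(D)}(x_{i,j}, r_1/4-\varepsilon)$, the density of $f^i_*{\rm Leb}_D$ with respect to the intrinsic Lebesgue measure lies between $1/\mathcal{K}$ and $\mathcal{K}$ (after normalization at the center). Parametrizing both approximating and limit disks by their projection to $F(x_\infty)$, these uniformly bounded densities pass to the weak-$*$ limit leaf by leaf, producing for every leaf $\gamma\in\mathcal{L}_\infty$ a measure absolutely continuous with respect to Lebesgue on $\gamma$, with density in the same range. The main obstacle, and the technical heart of the argument, is the measure-theoretic identification: one must show that the Rokhlin disintegration of $\mu_{ac}|_{L_\infty}$ along $\mathcal{L}_\infty$ coincides, up to transverse null sets, with the leafwise absolutely continuous measures extracted above. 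This is where the $C^{1+\xi}$ (and not merely $C^0$) convergence of the disks is indispensable: it provides Lipschitz transverse holonomies between the approximating disks and the limit leaves, allowing one to identify the transverse conditionals of $\mu_{ac}$ with the weak-$*$ limit of the $E$-direction marginals of $\mu_{n,ac,\varepsilon}$, and thus to transfer the uniform leafwise bounds to the genuine Rokhlin conditionals.
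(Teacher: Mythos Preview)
Your proposal is correct and follows essentially the same route as the paper: pass to a common weak-$*$ subsequence of $\mu_n$ and $\mu_{n,ac}$, use the $\varepsilon$-margin from Proposition~\ref{Pro:acpart-iterate} to guarantee that limit disks cross a small tube with positive $\mu_{ac}$-mass, invoke Proposition~\ref{Holder curvature} and Ascoli--Arzela (together with Lemma~\ref{lem:unstable}) to produce the $F$-cylinder $L_\infty$, and finally use the bounded distortion of Proposition~\ref{pro:bounded distortion} to control the conditional measures. The one noteworthy technical divergence is in the last step: where you speak of Lipschitz holonomies between approximating disks and limit leaves to identify the Rokhlin conditionals, the paper instead introduces an auxiliary compact space $\widehat{L}=\bigcup_{0\le i\le\infty}L_i\times\{i\}$ (endowed with a topology making the finite-level disks accumulate onto $L_\infty$) and a single measurable partition of $\widehat{L}$ by $\{x\}\times D_0$-fibers, so that the uniform distortion bound on every $\widehat{\mu}_{n,ac}$ passes directly to the limit via dominated convergence without any holonomy argument. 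Both devices accomplish the same identification; the paper's is slightly slicker in that it sidesteps the issue of overlapping disks at different times by separating them into distinct levels.
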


\begin{proof}

Let $\{n_k\}$ be a subsequence such that $\{\mu_{n_k}\}$ accumulates. By taking a subsequence if necessary, one can assume that $\{\mu_{n_k,ac}\}$ accumulates. Set $\mu_{ac}=\lim_{n\to\infty}\mu_{n,ac}$. We have $\mu_{ac}(\overline{U})\geq \limsup_{k \rightarrow \infty}\mu_{n_k,ac}(\overline{U})\geq\eta>0$, and then $\mu_{ac}(K_{\infty})\ge \eta$, since ${\rm supp}(\mu_{ac})\subset K_{\infty}$.

For $\varepsilon>0$ small, take
$$K_{\infty,\varepsilon}=\bigcap_{n\in\mathbb N}{\overline {\bigcup_{j\ge n}K_{j,\varepsilon}}}.$$
We have ${\rm supp}(\mu_{ac,\varepsilon})\subset K_{\infty,\varepsilon}$. Take a point $y\in {\rm supp}(\mu_{ac,\varepsilon})$. Then for $\delta>0$, we have $\mu_{ac}(K_{\infty}\cap B(y,\delta))\ge \mu_{ac,\varepsilon}(K_{\infty,\varepsilon}\cap B(y,\delta))>0$. By construction we have that $K_{\infty,0}\cap B(y,\delta)$ is an $F$-cylinder if we take $\delta\ll \varepsilon$, where $B(y,\delta)$ is a small open neighborhood of $y$ with radius $\delta$. Set
$$L_\infty=K_{\infty,0}\cap U(x,\delta)=\bigcup_{x\in X_\infty}\exp_y(\varphi_x(D_0)),$$
where $X_\infty=\{x\in E(y):x\in \exp_y^{-1}(\gamma),\gamma~\textrm{is an unstable leaf in~}K_{\infty,0}\}.$

Define $X_n=\{x\in E(y):x\in \exp_y^{-1}(B_{f^n(D)}(x_{n,j(n)},r_1/4)),~\textrm{for some~}x_{n,j(n)}\in f^n(D),~\textrm{where~}n~\textrm{is a~}\lambda_2\textrm{-hyperbolic time for~}f^{-n}(x_{n,j(n)})\}$. Notice that $X_n$ may have non-empty intersection with $X_\infty$ or $X_m$ for $m\neq n$.

By the construction, we have that $\mu_{ac}\prec\mu$. Now we need to show that the conditional measure of $\mu_{ac}$ associated to the canonical partition of ${\cal L}_\infty$ is absolutely continuous with respect to the Lebesgue measure for almost every $\gamma\in{\cal L}_\infty$.

Define
$$L_n=\left(\bigcup B_{f^n(D)}(x_{n,j(n)},r_1/4)\right)\cap B(y,\delta).$$
Notice that $L_n$ can be identified to be a fake $F$-cylinder as ${\exp}_y\varphi(X_n\times D_0)$.

Let $\widehat{L}=\bigcup_{0\leq i \le \infty}L_i\times \{i\}$, and $\widehat{\mu}_{n,ac}$ be
$$
\widehat{\mu}_{n,ac}(\bigcup_{i=0}^{n-1}B_{i}\times \{i\})=\frac{1}{n}\sum_{i=0}^{n-1}f_{\ast}^{i} {\rm Leb}_D(B_i),
$$
where $B_i\subset L_i$ is a measurable set.

We can define a limit in $\widehat{L}$ by the following way: we define $\lim_{n\to\infty}(x_n,m(n))=(x_0,n_0)$ if and only if $\lim_{n\to\infty}x_n=x_0$ in the Riemannian metric of the manifold $M$, and one of the following cases occurs
\begin{itemize}

\item $n_0=\infty$, $m(n)=\infty$ and $x_0,x_n\in L_\infty$ for $n$ large enough;

\item $n_0=\infty$, $\lim_{n\to\infty}m(n)=\infty$ and $x_n\in L_{m(n)}$;

\item $n_0$ if finite, for $n$ large enough, $m(n)=n_0$, $x_0,x_n\in L_{n_0}$.

\end{itemize}

This limit gives a topology on $\widehat L$, and under this topology, $\widehat L$ is a compact space.

\bigskip

The fake $F$-cylinder
$$\exp_{y}\left(\left(\bigcup_{1\le n\le\infty}X_n\right)\times D_0\right)$$
gives a measurable partition on $\widehat L$.

By the Proposition~\ref{pro:bounded distortion}, there is a constant $\mathcal{C}>0$ such that for each measurable set $B\subset D_0$, for each $n\in{\mathbb N}$, we have
$$\frac{1}{\mathcal{C}}\frac{{\rm Leb}(B)}{{\rm Leb}(D_0)}\le \frac{{\widehat \mu}_{n,ac}(\cup_{i=0}^{n-1}X_i\times B)}{{\widehat \mu}_{n,ac}(\cup_{i=0}^{n-1}X_i\times D)}\le {\mathcal{C}}\frac{{\rm Leb}(B)}{{\rm Leb}(D_0)}.$$

By using the dominated convergence theorem, for almost every disk in the $F$-cylinder $L_\infty$, the conditional measure of $\mu_{ac}$ is absolutely continuous with respect to the Lebesgue measure.

\end{proof}
\subsection{Existence of SRB measure and Physical measure}

For each $x\in M$, one can consider the measures
$$\mu_{x,n}=\frac{1}{n}\sum_{i=0}^{n-1}\delta_{f^i(x)}.$$
The set $\Sigma$ is defined to be: $x\in\Sigma$ if and only if $\lim_{n\to\infty}\mu_{x,n}$ exists and is ergodic. From Ergodic Decomposition theorem \cite[Chapter II.6]{Man87}, one knows that $\Sigma$ has total probability and if one denotes $\mu_{x}=\lim_{n\to\infty}\mu_{x,n}$, then for any bounded measurable function $\psi$ and any invariant measure $\nu$, one has $x\mapsto \int \psi d\mu_x$ is measurable and
$$\int \psi  d\nu=\int\int \psi d\mu_xd\nu(x).$$

%
%

\begin{lemma}\label{Lem:mu-disintegration}

There is a set $Z_\infty\subset L_\infty\cap\Sigma$ such that $\mu(Z_\infty)>0$ and the conditional measures of $(\mu|Z_\infty)$ on the unstable manifolds are absolutely continuous with respect to the Lebesgue measures on these manifolds.

\end{lemma}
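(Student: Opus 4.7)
The plan is to transfer the absolute continuity of conditional measures from $\mu_{ac}$ to $\mu$ by restricting to a subset of $L_\infty \cap \Sigma$ on which the two measures are mutually absolutely continuous.

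First, I would note that since $\Sigma$ has total probability we have $\mu(\Sigma) = 1$, so the relation $\mu_{ac} \prec \mu$ gives $\mu_{ac}(\Sigma^c) \le \mu(\Sigma^c) = 0$. Combined with Proposition \ref{partabsolutelycontinuous}, this yields
$$\mu_{ac}(L_\infty \cap \Sigma) = \mu_{ac}(L_\infty) > 0.$$
By the Radon-Nikodym theorem, since $\mu_{ac} \prec \mu$ there exists a measurable $\rho : M \to [0,1]$ with $d\mu_{ac} = \rho \, d\mu$. Set
$$Z_\infty = (L_\infty \cap \Sigma) \cap \{\rho > 0\}.$$
Since $\mu_{ac}(\{\rho = 0\}) = \int_{\{\rho = 0\}} \rho \, d\mu = 0$, we have $\mu_{ac}(Z_\infty) = \mu_{ac}(L_\infty \cap \Sigma) > 0$, and consequently $\mu(Z_\infty) > 0$, while $Z_\infty \subset L_\infty \cap \Sigma$ by construction.

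On $Z_\infty$ the measures $\mu|_{Z_\infty}$ and $\mu_{ac}|_{Z_\infty}$ are mutually absolutely continuous with densities $\rho$ and $\rho^{-1}$ respectively. Disintegrating both along the canonical partition of the $F$-cylinder $L_\infty$ into local unstable manifolds and then restricting to $Z_\infty$, the equivalence of the ambient measures forces the conditional measures on almost every unstable leaf $\gamma$ to be mutually equivalent, with densities proportional to $\rho|_\gamma$ and $\rho^{-1}|_\gamma$ against a common reference. Proposition \ref{partabsolutelycontinuous} asserts that the conditional measure of $\mu_{ac}$ along $\gamma$ is absolutely continuous with respect to the Lebesgue measure on $\gamma$ for almost every $\gamma$; since $\rho^{-1}$ is finite on $Z_\infty$, the conditional measure of $\mu|_{Z_\infty}$ along $\gamma$ inherits the same absolute continuity, which is exactly what the lemma demands.

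The main technical point to pin down is the commutation between restriction and disintegration: one must identify the conditional measures of $\mu|_{Z_\infty}$ and $\mu_{ac}|_{Z_\infty}$ with the natural restrictions of the conditional measures of $\mu|_{L_\infty}$ and $\mu_{ac}|_{L_\infty}$, so that the Radon-Nikodym density $\rho$ is transported leaf by leaf. This is a careful but routine application of the uniqueness of disintegration once the measurable partition structure on the $F$-cylinder $L_\infty$ obtained in Proposition \ref{partabsolutelycontinuous} is fixed.
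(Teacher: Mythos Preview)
Your proof is correct and reaches the same conclusion, but by a different route than the paper. The paper argues by maximality: it considers the family $\mathcal{A}$ of measurable subsets $A\subset L_\infty\cap\Sigma$ satisfying ${\rm Leb}_\gamma(\gamma\cap A)=0$ for every leaf $\gamma\in\mathcal{L}_\infty$, constructs a set $A_\infty\in\mathcal{A}$ with $\mu(A_\infty)=\sup_{A\in\mathcal{A}}\mu(A)$ (by taking a countable union along a maximizing sequence), and sets $Z_\infty=(L_\infty\cap\Sigma)\setminus A_\infty$. Since $A_\infty\in\mathcal{A}$ forces $\mu_{ac}(A_\infty)=0$, one gets $\mu(Z_\infty)\ge\mu_{ac}(Z_\infty)=\mu_{ac}(L_\infty)>0$; and the absolute continuity of the conditionals of $\mu|_{Z_\infty}$ follows directly from maximality, because any $A\subset Z_\infty$ that is Lebesgue-null on almost every leaf must satisfy $\mu(A)=0$, else $A_\infty\cup A\in\mathcal{A}$ would beat $A_\infty$. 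Your Radon--Nikodym construction is more conceptual and makes the equivalence $\mu|_{Z_\infty}\sim\mu_{ac}|_{Z_\infty}$ explicit, but it obliges you to check the commutation of restriction and disintegration that you flag at the end; the paper's argument sidesteps densities entirely and gets the leafwise absolute continuity in one line from the maximality of $A_\infty$. The two constructions produce the same $Z_\infty$ up to a $\mu$-null set.
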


\begin{proof}
We consider a family of measurable sets $\cal A$ such that for each $A\in{\cal A}$, we have $A\subset L_\infty\cap \Sigma$ and ${\rm Leb}_\gamma(\gamma\cap A)=0$ for each leaf $\gamma\in{\cal L}_\infty$. We can find such a measurable set $A_\infty$ such that
$$\mu(A_\infty)=\max_{A\in{\cal A}}\mu(A).$$
Such a maximal exists because if we have a sequence of measurable sets $\{A_n\}$ such that $\lim_{n\to\infty}\mu(A_n)=\sup_{A\in{\cal A}}\mu(A)$, then we take $A_{\infty}=\cup_{n=1}^{\infty}A_n$. By the definition of $\cal A$, we have $A_{\infty}\in\cal A$ , then $\mu_{ac}(A_{\infty})=0$, for the conditional measures of $\mu_{ac}$ along  the leaves of $\mathcal{L}_{\infty}$ are absolutely continuous with respect to Lebesgue as we proved in Proposition \ref{partabsolutelycontinuous}.

Set $Z_\infty=L_\infty\cap \Sigma\setminus A_\infty$. Since $\mu_{ac}(Z_\infty)=\mu_{ac}(L_{\infty})>0$, we have $\mu(Z_\infty)>0$. For any measurable set $A\subset Z_\infty$ satisfying ${\rm Leb}_{\gamma}(A\cap \gamma)=0$ for almost every $\gamma\in{\cal L}_\infty$, by the definition of $A_\infty$ and $Z_\infty$, we have $(\mu|Z_\infty)(A)=0$. This implies that  $(\mu|Z_\infty)$ has absolutely continuous conditional measures on the unstable manifolds.

\end{proof}

%
%

\begin{lemma}\label{Lem:localconstant}

By reducing $\Sigma$ if necessary, for every two points $x,y\in\Sigma\cap \gamma$ for some unstable manifold $\gamma$, we have that $\mu_x=\mu_y$.

\end{lemma}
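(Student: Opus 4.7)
The plan is to reduce $\Sigma$ to a set on which \emph{backward} Birkhoff averages also exist and agree with the forward limit, and then exploit the fact that two points on the same local unstable manifold merge exponentially fast under backward iteration.

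First I would replace $\Sigma$ by $\Sigma\cap\Sigma'$, where
$$
\Sigma'=\Bigl\{x\in M:\lim_{n\to\infty}\frac{1}{n}\sum_{i=0}^{n-1}\delta_{f^{-i}(x)}\text{ exists and equals }\mu_x\Bigr\}.
$$
Since every $f$-invariant ergodic probability is also $f^{-1}$-ergodic, applying Birkhoff's ergodic theorem to $(f^{-1},\nu)$ for every ergodic $\nu$ shows that $\Sigma'$ has full $\nu$-measure; the ergodic decomposition theorem (as used above to obtain $\Sigma$) then gives $\Sigma'$, and hence $\Sigma\cap\Sigma'$, total probability. From now on I denote the intersection again by $\Sigma$.

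Next, fix $x,y\in\Sigma\cap\gamma$, where $\gamma=B(x_\infty)$ is one of the local unstable disks produced by Lemma~\ref{lem:unstable}. Applying item $(3)$ of that lemma to both $x$ and $y$ and using the triangle inequality,
$$
d\bigl(f^{-n}(x),f^{-n}(y)\bigr)\le \lambda_2^{\,n/2}\bigl(d(x_\infty,x)+d(x_\infty,y)\bigr)\longrightarrow 0.
$$
Given any $\psi\in C(M)$, uniform continuity of $\psi$ on the compact manifold $M$ then implies $|\psi(f^{-i}x)-\psi(f^{-i}y)|\to 0$, and a standard Cesaro argument shows that the average of this difference also tends to zero. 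By our reduction of $\Sigma$,
$$
\int\psi\,d\mu_x-\int\psi\,d\mu_y=\lim_{n\to\infty}\frac{1}{n}\sum_{i=0}^{n-1}\bigl[\psi(f^{-i}x)-\psi(f^{-i}y)\bigr]=0.
$$
Since $C(M)$ is a separating family of test functions for Borel probabilities, $\mu_x=\mu_y$.

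The conceptual obstacle is that forward Cesaro averages at $x$ and $y$ need \emph{not} match orbit-by-orbit, because the two forward orbits typically separate along the unstable direction. The essential move is therefore to transport the argument to the backward orbit, where Lemma~\ref{lem:unstable}(3) provides exponential contraction; invariance of ergodic measures under time-reversal lets us convert equality of backward limits into equality of $\mu_x$ and $\mu_y$. Once this reversal is set up, everything else reduces to uniform continuity and a two-line Cesaro estimate.
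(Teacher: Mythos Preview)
Your proof is correct and follows essentially the same approach as the paper's own proof: reduce $\Sigma$ so that backward Birkhoff averages exist and coincide with $\mu_x$, then use the exponential backward contraction on the unstable disk $\gamma$ (Lemma~\ref{lem:unstable}(3)) to identify the backward limits at $x$ and $y$. The paper compresses the Ces\`aro/uniform-continuity step into the phrase ``by the definitions,'' whereas you spell it out, but the underlying argument is identical.
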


\begin{proof}

According to Birkhoff Ergodic theorem, by reducing $\Sigma$ if necessary, one can assume that for any $x\in\Sigma$, $\lim_{n\to\infty}1/n\sum_{i=0}^{n-1}\delta_{f^{-i}(x)}$ exists and equals to $\mu_x$.
For any $x,y\in\Sigma\cap \gamma$, one has $\lim_{n\to\infty}1/n\sum_{i=0}^{n-1}\delta_{f^{-i}(x)}=\lim_{n\to\infty}1/n\sum_{i=0}^{n-1}\delta_{f^{-i}(y)}$ by the definitions. This implies $\mu_x=\mu_y$.
\end{proof}

Denote by $\mathcal{P}=\{\gamma \cap Z_{\infty}:\gamma \in \mathcal{L}_{\infty}\}$ and ${\cal Q}=\{ Q\subset Z_{\infty}:~x,y\in Q~\textrm{if~and~only~if~}\mu_x=\mu_y\}$ the two measurable partitions of $Z_{\infty}$, then from Lemma \ref{Lem:localconstant} we have $\mathcal{Q}\prec\mathcal{P}$ which means $\mathcal{P}$ is finer than $\mathcal{Q}$. Also let $\pi_{\cal{P}}:Z_{\infty}\rightarrow \cal{P}$ and $\pi_{\cal{Q}}:Z_{\infty}\rightarrow \cal{Q}$ be the projections.

For every measurable subset $A\subset Z_\infty$, by the Ergodic Decomposition theorem we mentioned above, take $\psi=\chi_A$, we obtain
$$
\mu(A)=\int_{\Sigma} \mu_x(A)d\mu(x)
$$
and
$$
\mu_x(A)=\int \chi_{A}d\mu_x=\lim_{n\rightarrow +\infty}\frac{1}{n}\sum_{i=0}^{n-1}\chi_{A}(f^i(x))
$$
$\mu$-almost everywhere, where $\chi_{A}$ denotes the characteristic function of measurable set A. By Poincar\'{e}'s recurrence theorem we know $\mu$ almost every point $z\in Z_\infty$ has infinitely many  return times. Let $k(z)$ be the smallest integer such that $f^{-k(z)}(z)\in Z_\infty$. One knows
$$
\mu(A)=\int_{Z_\infty}k(z)\mu_z(A)d\mu(z)
$$
for any measurable subset $A$ of $Z_\infty$,
where one uses the fact $\mu_z=\mu_{f^i z}$ for every $i\in \mathbb{Z}$.

Recall the definition and properties of conditional expectation. Given two $\sigma$-algebras $\mathcal{B}_1$, $\mathcal{B}_2$ with the property $\mathcal{B}_2\subset \mathcal{B}_1$, that is to say $\mathcal{B}_2$ is the sub-$\sigma$-algebra of $\mathcal{B}_1$. Consider measurable space $(X,\mathcal{B}_1,\mu)$, one can define a conditional expectation operator $E(\cdot/\mathcal{B}_2):L^1(X,\mathcal{B}_1,\mu)\rightarrow L^1(X,\mathcal{B}_2,\mu)$ such that for every function $\phi\in L^1(X,\mathcal{B}_1,\mu)$, $E(\phi/\mathcal{B}_2)$ is the $\mu$.a.e. unique $\mathcal{B}_2$-measurable function with
$$
\int_A \phi d\mu=\int_A E(\phi/\mathcal{B}_2)d\mu,~~~\text{for every}~~A\in \mathcal{B}_2.
$$
For every $\phi\in L^1(X,\mathcal{B}_1,\mu)$ and $\mathcal{B}_2$-bounded measurable function $\psi$, we have $E(\phi \psi/\mathcal{B}_2)=\psi E(\phi /\mathcal{B}_2)$. That is to say
$$
\int_A \phi \psi d\mu= \int_A \psi E(\phi/\mathcal{B}_2)d\mu,~~~\text{for every}~~A\in \mathcal{B}_2.
$$

Consider the sub-$\sigma$-algebra $\cal{B}(\cal{Q})$ of the original $\cal{B}$ which is generated by the measurable partition $\cal{Q}$. Then there exists a unique conditional expectation $\ell$ of the function $k$ which is a measurable function defined on $\cal{B}(\cal{Q})$ and $\ell$ is constant on each element of $\cal{Q}$. Moreover, as $z\mapsto \mu_z(A)$ is $\cal{B}(\cal{Q})$-bounded measurable functions for every measurable set $A$, by Ergodic Decomposition theorem, we have
$$
\int_E k(z)\mu_z(A)d\mu=\int_E \ell(z)\mu_z(A)d\mu,~~~\text{for every}~~\mathcal{B}(\mathcal{Q})\text{-measurable set}~E.
$$
We can define $\mu_{Q}=\mu_z$ and $\ell(Q)=\ell(z)$ for some $z\in Q$, every $Q\in \cal{Q}$. They are well defined as $\mu_z$ and $\ell(z)$ are constant on each element of $\cal{Q}$. Thus,
$$
\int \ell(z)\mu_z(A)d\mu=\int \ell(Q)\mu_Q(A)d\widehat\mu_{\cal{Q}},
$$
where $\widehat\mu_{\cal{Q}}$ is the quotient measure defined by $\widehat\mu_{\cal{Q}}(B)=\pi_{\cal{Q}}^{-1}(B)$ for any $B\subset \cal{Q}$.
So,
$$
\mu(A)=\int \ell(Q)\mu_Q(A)d\widehat\mu_{\cal{Q}}, ~~\text{for every measurable subset}~A\subset Z_{\infty}.
$$
We have the following claim:
\begin{Claim}\label{lemma;conditional measures}
 $\{\ell(Q)\mu_Q\}_{Q\in \cal{Q}}$ is the family of conditional measures of $\mu$ with respect to the measurable partition $\cal{Q}$.
\end{Claim}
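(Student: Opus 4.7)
The plan is to verify that the family $\{\ell(Q)\mu_Q\}_{Q\in\mathcal{Q}}$ satisfies the defining properties of a Rokhlin disintegration of $\mu|_{Z_\infty}$ along the partition $\mathcal{Q}$, and then invoke uniqueness. The identity $\mu(A)=\int\ell(Q)\mu_Q(A)\,d\widehat\mu_{\mathcal Q}(Q)$ for every measurable $A\subset Z_\infty$ has already been derived, so only two points remain to be checked: (i) that $\mu_Q$ is, up to $\mu_Q$-null sets inside $Z_\infty$, concentrated on its atom $Q$; and (ii) the normalization $\ell(Q)\mu_Q(Q)=1$ for $\widehat\mu_{\mathcal Q}$-almost every $Q$.

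For (i) I would argue that by ergodicity of $\mu_Q$, Birkhoff's theorem ensures $\mu_w=\mu_Q$ for $\mu_Q$-almost every $w$; since $Z_\infty\subset\Sigma$, this forces $\mu_Q$-almost every $w\in Z_\infty$ to lie in $Q=\{z\in Z_\infty:\mu_z=\mu_Q\}$. Hence $\mu_Q(Z_\infty\setminus Q)=0$, and for every measurable $A\subset Z_\infty$, $\mu_Q(A)=\mu_Q(A\cap Q)$, so $\ell(Q)\mu_Q$ is effectively carried by the atom $Q$.

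For (ii) I would apply the disintegration identity to an arbitrary $\mathcal{B}(\mathcal{Q})$-measurable set $E\subset Z_\infty$. Since such an $E$ is a union of atoms, (i) gives $\mu_Q(E)=\chi_E(Q)\,\mu_Q(Q)$ (with $\chi_E$ viewed on the quotient), so the identity reduces to
$$\mu(E)=\int_E \ell(Q)\mu_Q(Q)\,d\widehat\mu_{\mathcal Q}(Q).$$
On the other hand $\mu(E)=\widehat\mu_{\mathcal Q}(E)=\int_E 1\,d\widehat\mu_{\mathcal Q}$ by the definition of the quotient measure, so the integrands must agree $\widehat\mu_{\mathcal Q}$-almost everywhere, yielding $\ell(Q)\mu_Q(Q)=1$, i.e.\ $\ell(Q)=1/\mu_Q(Q)$. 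This is essentially Kac's recurrence formula in disguise: $f$-invariance together with $\mu_{f^i z}=\mu_z$ makes $k(z)$ the first-return time of $z\in Q$ to $Q$ itself under $f^{-1}$, and ergodic Kac applied to $\mu_Q$ gives $\int_Q k\,d\mu_Q=1$.

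Together (i) and (ii) show that each $\ell(Q)\mu_Q$ is a probability measure concentrated on $Q$, that $Q\mapsto\ell(Q)\mu_Q(A)$ is measurable for every $A$, and that the family decomposes $\mu|_{Z_\infty}$ against $\widehat\mu_{\mathcal Q}$; by the uniqueness of Rokhlin disintegrations along a measurable partition, the family must coincide $\widehat\mu_{\mathcal Q}$-a.e.\ with the conditional measures of $\mu$ with respect to $\mathcal Q$. The only delicate step is (ii), and its essence is merely the observation that testing the already-derived disintegration formula against sets that are themselves unions of atoms forces the factor $\ell(Q)\mu_Q(Q)$ to equal $1$ almost everywhere; all of the dynamical input (ergodicity, invariance, Kac) serves only to make this interpretation transparent.
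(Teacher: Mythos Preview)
Your proof is correct and follows essentially the same path as the paper's: both arguments rest on the concentration property $\mu_Q(Z_\infty\setminus Q)=0$ (which the paper simply asserts and you justify via ergodicity), and both obtain the normalization $\ell(Q)\mu_Q(Q)=1$ by testing the already-derived identity $\mu(A)=\int\ell(Q)\mu_Q(A)\,d\widehat\mu_{\mathcal Q}$ against $\mathcal B(\mathcal Q)$-measurable sets. The only cosmetic difference is that the paper phrases the normalization step as a contradiction (assume $\ell(Q)\mu_Q(Q)>1$ on a set of positive $\widehat\mu_{\mathcal Q}$-measure and derive $\widehat\mu_{\mathcal Q}(B)>\widehat\mu_{\mathcal Q}(B)$), whereas you argue directly that the two integrands $\ell(Q)\mu_Q(Q)$ and $1$ must agree $\widehat\mu_{\mathcal Q}$-a.e.; your Kac remark is a pleasant interpretation but is not needed for the argument.
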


\begin{proof} Without loss of generality, by contradiction, we may assume that there is a subset $B\subset \mathcal{Q}$ with positive $\widehat\mu_{\cal{Q}}$ measure such that
$$
\ell(Q)\mu_Q(Q)>1 ~~\text{for any}~~Q\in B.
$$
By the definition of $\widehat\mu_{\cal{Q}}$, we have
\begin{eqnarray*}
 \widehat\mu_{\cal{Q}}(B)=\mu (\pi_{\mathcal{Q}}^{-1}(B))& = &\int\ell(Q)\mu_Q(\pi_{\mathcal{Q}}^{-1}(B))d\widehat\mu_{\cal{Q}} \\
   & = & {\int}_B \ell(Q)\mu_Q(\pi_{\mathcal{Q}}^{-1}(B))d\widehat\mu_{\cal{Q}} \\
   & + &  {\int}_{\mathcal{Q}\setminus B} \ell(Q)\mu_Q(\pi_{\mathcal{Q}}^{-1}(B))d\widehat\mu_{\cal{Q}}.
\end{eqnarray*}
Observe that $\mu_Q(\pi_{\mathcal{Q}}^{-1}(B))=0$ for any $Q\in \mathcal{Q}\setminus B$, this is because of
$\pi_{\mathcal{Q}}^{-1}(B)\subset Z_{\infty}\setminus Q$ for every $Q\in \mathcal{Q}\setminus B$ and the fact $\mu_Q(Z_{\infty}\setminus Q)=0$ for every $Q\in \mathcal{Q}$. On the other hand, $Q\in B$ implies $\mu_Q(\pi_{\mathcal{Q}}^{-1}(B))=\mu_Q(Q)$, all these together we know
\begin{eqnarray*}
  \widehat\mu_{\cal{Q}}(B) &=& {\int}_B \ell(Q)\mu_Q(\pi_{\mathcal{Q}}^{-1}(B))d\widehat\mu_{\cal{Q}} \\
  &=& {\int}_B \ell(Q)\mu_Q(Q)d\widehat\mu_{\cal{Q}} \\
  &>& \widehat\mu_{\cal{Q}}(B).
\end{eqnarray*}
This gives a contradiction, which completes the proof of the claim.

\end{proof}

\begin{lemma}\label{lem:ergodicmeasure}
There exists some point $z\in Z_{\infty}$ such that $\mu_z(Z_{\infty})>0$ and $\mu_z$ has absolutely continuous conditional measures along the leaves of $\mathcal{L}_{\infty}$.
\end{lemma}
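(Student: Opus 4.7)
The plan is to obtain the lemma as a straightforward corollary of the preceding Claim by invoking the transitivity of Rokhlin disintegration for the pair of measurable partitions $\mathcal{Q} \prec \mathcal{P}$ on $Z_\infty$. Lemma~\ref{Lem:mu-disintegration} supplies a family $\{\eta_\gamma\}_{\gamma \in \mathcal{P}}$ of conditional measures of $\mu|Z_\infty$ along leaves of $\mathcal{L}_\infty$, each absolutely continuous with respect to the Lebesgue measure on its leaf. The preceding Claim supplies $\{\ell(Q)\mu_Q\}_{Q \in \mathcal{Q}}$ as the conditional measures of $\mu|Z_\infty$ with respect to $\mathcal{Q}$. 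Since $\mathcal{P}$ refines $\mathcal{Q}$ (by Lemma~\ref{Lem:localconstant}, each $\mathcal{P}$-atom is contained in a single $\mathcal{Q}$-atom), Rokhlin's theorem on successive conditioning identifies, for $\hat\mu_{\mathcal{Q}}$-a.e.\ $Q$, the conditional measures of $\ell(Q)\mu_Q$ along $\mathcal{P}|Q$ with the restrictions of $\{\eta_\gamma\}_{\gamma \subset Q}$. Hence $\ell(Q)\mu_Q$, and therefore $\mu_Q$, admits absolutely continuous conditional measures along leaves of $\mathcal{L}_\infty$ for a.e.\ $Q$.

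For the positivity assertion I would apply the Claim with the test set $A=Q$ itself: since $\{\ell(Q)\mu_Q\}$ are probability conditionals supported on the fibers of $\mathcal{Q}$, one has $\ell(Q)\mu_Q(Q)=1$, whence $\mu_Q(Z_\infty) \ge \mu_Q(Q) = 1/\ell(Q) > 0$ for $\hat\mu_{\mathcal{Q}}$-a.e.\ $Q$. Since $\mu(Z_\infty) > 0$ by Lemma~\ref{Lem:mu-disintegration}, the quotient measure $\hat\mu_{\mathcal{Q}}$ is non-trivial, so I can choose a $\mathcal{Q}$-atom $Q_0$ on which \emph{both} conclusions (positivity on $Z_\infty$ and absolute continuity along $\mathcal{L}_\infty$) hold. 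Picking any $z \in Q_0$ and invoking Lemma~\ref{Lem:localconstant} gives $\mu_z = \mu_{Q_0}$, completing the proof.

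The main obstacle is a bookkeeping issue: to legitimately invoke the transitivity of conditional measures one must verify that $\mathcal{Q}$ is a measurable partition in the Rokhlin sense. This is ensured by the measurability of the map $x \mapsto \mu_x$ already recorded after the definition of $\Sigma$ (the fact that $x \mapsto \int \psi\,d\mu_x$ is measurable for each bounded measurable $\psi$), which makes the equivalence relation $\mu_x = \mu_y$ a measurable one on the Polish space $M$. A secondary subtlety is that $\ell(Q)\mu_Q$ is a priori only a measure on $M$ (since $\mu_z$ lives on $M$), while the conditional measures should be supported on $Q \subset Z_\infty$; but the Claim equality $\ell(Q)\mu_Q(Q)=1$ shows the relevant mass is concentrated on $Q$, so restricting to $Z_\infty$ causes no loss, and the transitivity argument goes through as sketched.
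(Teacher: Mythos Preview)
Your proposal is correct and follows essentially the same route as the paper: both arguments invoke the Claim to identify $\{\ell(Q)\mu_Q\}$ as the $\mathcal{Q}$-conditionals of $\mu|Z_\infty$, then use transitivity (the paper says ``essential uniqueness of Rokhlin decomposition'') of the disintegrations $\mathcal{Q}\prec\mathcal{P}$ to transfer the absolute continuity from Lemma~\ref{Lem:mu-disintegration} to $\mu_Q$. Your positivity argument via $\ell(Q)\mu_Q(Q)=1$ is slightly cleaner than the paper's, which instead integrates the identity $\mu(Z_\infty)=\int_{Z_\infty}k(z)\mu_z(Z_\infty)\,d\mu$; both are valid and rest on the same preliminary machinery.
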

\begin{proof}
For every $Q\in \cal{Q}$, let $\{\mu_{Q,P}: P\in \mathcal{P},P\subset Q \}$ be the family of conditional measures of $\mu_{Q}$ with respect to the finer  partition $\cal{P}$ restrict to $Q$. Denote by $\widehat{\mu}_{Q,\mathcal{P}}$ the quotient measure of $\mu_{Q}$ with respect to the partition $\mathcal{P}$ restricted to $Q$, then by definition for every measurable set $A$ we have
$$
\mu_{Q}(A)=\int\mu_{Q,P}(A)d\widehat{\mu}_{Q,\mathcal{P}},
$$
which implies
$$
\ell(Q)\mu_{Q}(A)=\int\mu_{Q,P}(A)d\ell(Q)\widehat{\mu}_{Q,\mathcal{P}}.
$$
So $\{\mu_{Q,P}\}$ are conditional measures of $\ell(Q)\mu_{Q}$ with respect to partition $\cal{P}$ restricted to $Q$. If we denote by $\{\mu_P\}_{P\in \cal{P}}$ as the conditional measures of $\mu$ with respect to the finer partition $\cal{P}$, as we have shown in the claim above, $\{\ell(Q)\mu_Q\}_{Q\in \cal{Q}}$ are the conditional measures of $\mu$ with respect to the measurable partition $\cal{Q}$. Therefore, by the essential uniqueness of Rokhlin decomposition we have
$\mu_P=\mu_{Q,P}$ for $\widehat\mu_{\cal{Q}}$-almost every $Q\in \cal{Q}$ and $\widehat\mu_{Q,\cal{P}}$-almost every $P\in \cal{P}$ with $P\subset Q$. By definition of $\mu_Q$, which is equivalent to say $\mu_P=\mu_{z,P}$ for $\mu$-almost every $z\in Z_{\infty}$ and $\widehat{\mu}_z$-almost every $P$, where $\widehat{\mu}_z$ represents for the quotient measure of $\mu_z$ with respect to partition $\cal{P}$.

Since we have
$$
\int_{Z_{\infty}} k(z)\mu_z(Z_{\infty})d\mu= \mu(Z_{\infty})>0,
$$
there exists a subset $Z_1 \subset Z_{\infty}$ such that $\mu_z(Z_{\infty})>0$ for every $z\in Z_1$. Furthermore, as we have shown in Lemma \ref{Lem:mu-disintegration}, $\mu_P$ is absolutely continuous with respect to Lebesgue measure for almost every $P$. Here one should notice that for every $P\in \cal{P}$, we have $P=\gamma \cap Z_{\infty}$, for some $\gamma \in \cal{L}_{\infty}$ by the construction of $\cal{P}$. Then by the argument above we obtain a set $Z_2$ with full $(\mu|Z_{\infty})$ measure such that for every $z\in Z_2$,
$\mu_{z,P}$ is absolutely continuous with respect to Lebesgue measure for $\widehat{\mu}_z$-almost every $P\in \cal{P}$. So if one takes some point $z\in Z_1\cap Z_2$, then it satisfies the requirement of this lemma.

\end{proof}

\begin{proof}[Proof of Theorem A] Take $z\in Z_{\infty}$ as in Lemma \ref{lem:ergodicmeasure}, then $\mu_z(L_{\infty})>0$.
For every regular points $y$ we have $\lim_{n\rightarrow \infty}\frac{1}{n}\log \|Df^{-n}/F(y)\|\leq \frac{1}{2}\log\lambda_2$,
which can be concluded from Lemma~\ref{lem:unstable}. That is to say there exists a set with positive $\mu_z$-measure such that all the points there have dim$F$ Lyapunove exponents
larger than $-\frac{1}{2}\log \lambda_2$,  so we have that $\mu_z$  has dim$F$ positive Lyapunov exponents by ergodicity.
By assumption, we know all the Lyapunov exponents along $E$-direction are non positive for $\mu_z$ almost every point. So, by  Pesin theory (see more in \cite{bp02} for instance)
we obtain that $\mu_z$-almost every point $x$ has a local unstable manifold. Furthermore, since the disks $\gamma \in \mathcal{L}_{\infty}$  are contained in
the local unstable manifolds. Using the ergodicity and absolute continuity property proved in Lemma \ref{lem:ergodicmeasure},
we have that the ergodic measure $\mu_z$ has absolutely continuous conditional measures on unstable manifolds. This ends the proof of Theorem~\ref{Theo-attractor}.
\end{proof}


\begin{proof}[Proof of Corollary 1]
The condition
$$
\liminf_{n\rightarrow \infty}\frac{1}{n}\log\|Df^n/E(x)\|<0.
$$
on a total probability set implies that $E$ is uniformly contracted by the work of Cao in \cite{cao03}.  Since we have
found an ergodic SRB measure $\mu$, then
$\mu$ is a Physical measure by using the absolute continuity of stable foliation. One can see \cite{y02} for more details.
\end{proof}

\begin{proof}[A sketch of the proof of Theorem~\ref{Thm;submanifold}]

By the assumption, as in the proof of Theorem~\ref{Theo-attractor}, mainly applying Lemma \ref{Lem:infinitePliss} we know that there exist $\lambda_1\in(0,1)$ and some $j\in \mathbb{N}$ such that the following set
$$
\left\{ x\in f^j(D)\cap \Lambda_{\lambda_1,1}:~T_x D=F(x) \right\}$$
has positive Lebesgue measure in $f^j(D)$. Then we take a Lebesgue density point of the above set and a small sub-disk around this point. By following the the proof of Theorem \ref{Theo-attractor}, we know the existence of SRB measures.

\end{proof}

\vskip 5pt

\noindent Zeya Mi

\noindent School of Mathematical Sciences

\noindent Soochow University, Suzhou, 215006, P.R. China

\noindent mizeya@163.com, mi.zeya@northwestern.edu

\vskip 5pt

\noindent Yongluo Cao

\noindent School of Mathematical Sciences

\noindent Soochow University, Suzhou, 215006, P.R. China

\noindent ylcao@suda.edu.cn

\vskip 5pt

\noindent Dawei Yang

\noindent School of Mathematical Sciences

\noindent Soochow University, Suzhou, 215006, P.R. China

\noindent yangdw1981@gmail.com, yangdw@suda.edu.cn

\end{document}